\documentclass[11pt]{article}
\usepackage[utf8]{inputenc}
\usepackage[T1]{fontenc}
\usepackage{graphicx}
\usepackage{amsmath,amsfonts,amssymb,amsthm}
\usepackage{natbib}

\usepackage{mathtools}
\usepackage{mathrsfs}
\usepackage{xcolor}
\usepackage{hyperref}
\usepackage{booktabs}
\usepackage{bm}
\usepackage{color}
\usepackage[font=footnotesize]{caption}
\usepackage{enumitem}
\usepackage{empheq}
\usepackage{framed}
\usepackage{comment}
\usepackage{fullpage}
\usepackage[capitalize]{cleveref}
\usepackage{soul}
\usepackage{dirtytalk}
\usepackage{dsfont}
\usepackage{bbm}
\usepackage{thmtools}
\usepackage{thm-restate}

\definecolor{myred}{HTML}{880000}
\definecolor{mygreen}{HTML}{008800}
\definecolor{myblue}{HTML}{000088}
\definecolor{linkblue}{HTML}{0000BB}

\hypersetup{
  colorlinks,
  linkcolor={myred},
  citecolor={myblue},
  urlcolor={linkblue}
}

\newcommand{\N}{\mathbb N}

\newcommand{\R}{\mathbb R}

\newcommand{\E}{{\mathbf E}}
\renewcommand{\P}{\mathbf P}

\newcommand{\pp}{\,:\,}
\newcommand{\vol}{\mathrm{Vol}}

\renewcommand{\leq}{\leqslant}
\renewcommand{\geq}{\geqslant}
\renewcommand{\le}{\leqslant}
\renewcommand{\ge}{\geqslant}

\newcommand{\argmax}{\mathop{\mathrm{arg}\,\mathrm{max}}}
\newcommand{\wt}{\widetilde}
\newcommand{\wh}{\widehat}
\newcommand{\ol}{\overline}

\newcommand{\di}{\mathrm{d}}

\newcommand{\ind}[1]{\bm 1 ( #1 )}

\newcommand{\eps}{\varepsilon}

\newcommand{\probas}{\mathcal{P}}

\newcommand{\M}{M}
\newcommand{\X}{\mathcal{X}}

\newcommand{\Qs}{\mathcal{Q}}

\newcommand{\nloc}{N_{\mathsf{loc}}}
\newcommand{\conv}{\mathop{\mathrm{conv}}}

\usepackage{xspace}
\newcommand{\ie}{i.e.\@\xspace}
\newcommand{\eg}{e.g.\@\xspace}
\newcommand{\iid}{i.i.d.\@\xspace}

\renewcommand{\triangleq}{=}
\renewcommand{\hat}{\widehat}
\renewcommand{\tilde}{\widetilde}

\newcommand{\kl}{\operatorname{KL}}
\newcommand{\kll}[2]{\kl ({#1}, {#2})}

\newcommand{\hel}{\operatorname{H}}
\newcommand{\hels}{\hel^2}

\newtheorem{proposition}{Proposition}
\newtheorem{theorem}{Theorem}
\newtheorem*{theorem*}{Theorem}
\newtheorem{lemma}{Lemma}
\newtheorem{claim}{Claim}
\newtheorem{corollary}{Corollary}
\newtheorem{fact}{Fact}

\theoremstyle{definition}
\newtheorem{definition}{Definition}

\theoremstyle{remark}

\DeclarePairedDelimiter{\abs}{\lvert}{\rvert}

\DeclarePairedDelimiter{\set}{\{}{\}}

\title{Ratio Covers of Convex Sets and Optimal Mixture Density Estimation}
\author{
Spencer Compton\thanks{Department of Computer Science, Stanford University.}\quad
G\'{a}bor Lugosi\thanks{Department of Economics and Business, Pompeu Fabra University, Barcelona, Spain; ICREA, Barcelona, Spain; Barcelona Graduate School of Economics.}\quad
Jaouad Mourtada\thanks{Department of Statistics, CREST/ENSAE Paris, Palaiseau, France.}\quad
Jian Qian\thanks{Department of AI and Data Science, The University of Hong Kong.}\\
Nikita Zhivotovskiy\thanks{Department of Statistics, University of California, Berkeley.}
}
\date{}

\begin{document}

\maketitle

\begin{abstract}
  We study density estimation in Kullback-Leibler divergence: given an i.i.d.~sample from an unknown density $p^\star$, the goal is to construct an estimator $\widehat{p}$ such that $\mathrm{KL} (p^\star, \widehat{p})$ is small with high probability.
  We consider two fundamental settings involving a finite dictionary of densities: (i) \emph{model aggregation}, where $p^\star$ belongs to the dictionary, and (ii) \emph{convex aggregation} (mixture density estimation), where $p^\star$ is a mixture of densities from the dictionary.
  Crucially, we make no assumption on the base densities: their ratios may be unbounded and their supports may differ.
  For both problems, we identify the best possible high-probability guarantees in terms of the dictionary size, sample size, and confidence level.
  These optimal rates are higher than those achievable when density ratios are bounded by absolute constants; for mixture density estimation, they match existing lower bounds in the special case of discrete distributions.
  
  Our analysis of the mixture case hinges on two new covering results.
  First, we provide a sharp, distribution-free upper bound on the local Hellinger entropy of the class of mixtures of $\M$ distributions.
  Second, we prove an optimal \emph{ratio covering} theorem for convex sets: for every convex compact set $K \subset \mathbb{R}_+^d$, there exists a subset $A \subset K$ with at most $2^{O(d)}$ elements such that each element of $K$ is coordinate-wise dominated by an element of $A$ up to a universal constant factor.
  This geometric result is of independent interest; notably, it yields new cardinality estimates for $\varepsilon$-approximate Pareto sets in multi-objective optimization with convex feasible set.
\end{abstract}

\setcounter{tocdepth}{1}
\section{Introduction}

We revisit two classical distribution estimation problems.
Let $\X$ be a measurable space endowed with a measure $\mu$.
In addition, let $p_1,\ldots,p_\M$ be a collection of $\M \geq 2$ known densities on $\X$ with respect to $\mu$, with no assumptions on these densities (in particular, the corresponding distributions need not satisfy any moment or tail conditions, and their supports may differ).
We 
observe an \iid sample $X_1,\ldots,X_n$ drawn from some unknown density $p^\star$ for which we only assume either
\begin{enumerate}
\item $p^\star \in \{p_1,\ldots,p_\M\}$, in the \emph{Model Aggregation} setting;
\item or $p^\star \in \operatorname{conv}\{p_1,\ldots,p_\M\}$, in the \emph{Convex Aggregation} (or \emph{mixture density estimation}) setting.
  % (Convex Aggregation, also known as mixture density estimation).
\end{enumerate}
Our goal is to construct, based on $X_1,\ldots,X_n$, a density estimator $\widehat{p}$ for which the Kullback--Leibler divergence $\kl(p^\star,\widehat{p})$ is small with high probability (where we recall that $\kll{p}{q} = \int_\X p \log (p/q) \di \mu$ for two probability densities $p, q$ on $\X$ with respect to $\mu$).

In this work, our primary aim is to characterize the best possible high-probability guarantees for both Model Aggregation and Convex Aggregation.
More precisely, given $\M, n \geq 2$ and $\delta \in (0, 1/2)$, we aim to identify (up to absolute constant factors) the smallest quantities $\psi_{\mathrm{MA}} (M, n, \delta)$ and $\psi_{\mathrm{CA}} (M, n, \delta)$ such that, for every probability densities $p_1, \dots, p_\M$ on a measure space $(\X, \mu)$:
\begin{enumerate}
\item there exists an estimator $\wh p_{\mathrm{MA}}$ such that, for every $p^\star \in \set{p_1, \dots, p_\M}$, with probability at least $1-\delta$ over $X_1, \dots, X_n$ \iid from $p^\star$ one has
  \begin{equation}
    \label{eq:def-optrate-ma}
    \kll{p^\star}{\wh p_{\mathrm{MA}}}
    \leq \psi_{\mathrm{MA}} (M, n, \delta)
    ;
  \end{equation}
\item there exists an estimator $\wh p_{\mathrm{CA}}$ such that, for every $p^\star \in \conv \set{p_1, \dots, p_\M}$, with probability at least $1-\delta$ over $X_1, \dots, X_n$ \iid from $p^\star$ one has
  \begin{equation}
    \label{eq:def-optrate-ca}
    \kll{p^\star}{\wh p_{\mathrm{CA}}}
    \leq \psi_{\mathrm{CA}} (M, n, \delta)
    .
  \end{equation}
\end{enumerate}

The fact that we make no assumption on the densities\footnote{Given probability distributions $P_1, \dots, P_\M$ on $\X$, the existence of a dominating measure $\mu$ such that each $P_j$ admits a density $p_j$ with respect to $\mu$ is not restrictive, as one may simply take $\mu = P_1 + \dots + P_\M$.} $p_1, \dots, p_\M$ induces some difficulties even for Model Aggregation.
Indeed, in this case it is straightforward to show (see Fact~\ref{fac:model-selection-fails} in Appendix~\ref{sec:suboptimal}) that no \emph{selection method} $\wh p$ returning an element of the dictionary $\set{p_1, \dots, p_\M}$ achieves any meaningful distribution-free guarantee.
In particular, this rules out arguably the simplest method, namely the maximum likelihood estimator (MLE).
For this reason, the procedures we will consider will return mixtures of distributions from the dictionary, even for Model Aggregation.

Aside from the MLE, the most common method for Model Aggregation is Bayesian model averaging, which returns a mixture of the densities $p_1, \dots, p_\M$ weighted by their Bayesian posterior weights (say, starting from a uniform prior distribution over $p_1, \dots, p_\M$).
In Appendix~\ref{sec:suboptimal}, we show that this standard method is also suboptimal in the high-probability regime.
Specifically, we obtain matching upper and lower bound on the tail performance of Bayes model averaging in the case where $M=2$, which show that this method achieves nontrivial yet suboptimal guarantees.

We note that there is a rich literature on Kullback-Leibler density estimation, both in the Model aggregation~\cite{barron1987bayes,yang1999information,yang2000mixing,catoni1997mixture,juditsky2008learning,audibert2007progressive,audibert2009fastrates,Rigollet2012KLAggregation,ButuceaDelmasDutfoyFischer2017Optimal,baraud2021tests,vanderhoeven2023high} and Convex aggregation~\cite{li1999mixture,li1999estimation,rakhlin2005risk,dalalyan2018optimal} settings.
However, our point of view differs from that of the aggregation literature.
Indeed, a key emphasis in these works is the effect of model misspecification, where $p^\star$ does not belong to $\set{p_1, \dots, p_\M}$; while---with two exceptions discussed below---the ratios of densities $p_1, \dots, p_\M$ are assumed to be bounded, with the guarantees diverging when the bound on density ratios diverges.
In contrast, we consider arbitrary densities $p_1, \dots, p_\M$, for which optimal high-probability guarantees are unknown even in the well-specified case (where $p^\star$ satisfies either of the two assumptions above).
For these reasons, we focus in this work on optimal guarantees in the well-specified case, although we provide in Section~\ref{sec:misspecified} extensions to the misspecified case with additional logarithmic factors.

An important special case of mixture density estimation in which the boundedness condition fails to hold is that of \emph{estimation of discrete distributions}, where one aims to estimate an arbitrary distribution on a finite set (say, $\set{1, \dots, M}$).
This can be framed as mixture density estimation, where $\X = \set{1, \dots, M}$, $\mu$ is the counting measure (\ie, $\mu (A)$ is the cardinality of $A$ for $A \subset \X$), and for $1 \leq j \leq M$ the density $p_j : \X \to \R^+$ is defined by $p_j (k) = 1$ if $k = j$ and $p_j (k) = 0$ if $k \neq j$.
(In other words, any distribution on $\X$ is a mixture of Dirac masses.)
In this case, it is relatively straightforward to show, and has been known for a long time~\cite{catoni1997mixture,forster2002relative,braess2004bernstein,mourtada2019improper}, that optimal in-expectation guarantees are of order $\M/n$.
In contrast, the study of high-probability bounds is markedly more delicate and has received significant interest in recent years~\cite{bhattacharyya2021near,han2023optimal,canonne2023concentration,vanderhoeven2023high,mourtada2025estimation,van2025nearly}, leading finally to optimal guarantees in~\cite{mourtada2025estimation}.
However, the techniques used in these works are largely specific to the discrete setting, and
it is unclear how to extend them to the general mixture setting.

We now discuss existing guarantees in Model Aggregation and Convex Aggregation that apply to arbitrary densities $p_1, \dots, p_\M$---both of which only hold in expectation.

For Model Aggregation, optimal guarantees in expectation have been obtained in seminal work of Yang and Barron~\cite{yang1999information,yang2000mixing} and independently by Catoni~\cite{catoni1997mixture,catoni2004statistical}.
The corresponding estimator $\wt p$ (the progressive mixture rule) traces back to~\cite{barron1987bayes} and consists in a version of Bayes model averaging with additional averaging over sample sizes.
For $\M$ densities, it achieves the following guarantee:
\begin{equation}
    \label{eq:yangbarron}
    \E\big[\kl(p^\star,\widetilde{p})\big]\;\le\;\frac{\log \M}{n+1}.
\end{equation}
While this conclusively addresses the important question of optimal in-expectation guarantees, it leaves the more delicate question of optimal high-probability guarantees unanswered.
Specifically, the upper bound~\eqref{eq:yangbarron} implies that $\psi_{\mathrm{MA}} (M, n, \delta) \leq \log(M)/(n \delta)$, but this bound is highly suboptimal in the high-confidence regime of small $\delta$.
In fact, as we discuss in Appendix~\ref{sec:suboptimal}, the progressive mixture rule achieves suboptimal high-probability guarantees, hence another estimator $\wh p$ is required.

For Convex Aggregation, optimal guarantees are unknown even in expectation.
The best known general upper bound is achieved by a continuous version of the progressive mixture rule $\wt p$, applied to the probability simplex with uniform prior distribution.
Combining regret bounds for the Bayes mixture method~\cite{hazan2007logarithmic} (related log-loss regret guarantees for the simplex go back to Krichevsky--Trofimov \cite{krichevsky1981performance} and to the universal portfolio analysis of Cover \cite{cover1991universal}, see also~\cite{jezequel2025efficient} for recent progress on this problem) with an online-to-batch conversion gives the following guarantee:
\begin{equation}
    \label{eq:simplex}
\E\big[\kl(p^\star,\widetilde{p})\big]\le\frac{\M\big(1+\log(1+n/\M)\big)}{n+1}.
\end{equation}
This bound is known to be suboptimal at least in the canonical special case of discrete distribution estimation discussed above, due to the presence of the $\log (1+ n/M)$ factor.
In addition, as in the Model Aggregation case, it does not address the question of optimal high-probability guarantees.

\subsection{Our contributions}

Our main statistical contributions are optimal high-probability guarantees for both Model Aggregation and Convex Aggregation (as well as optimal in-expectation rates for Convex Aggregation), for arbitrary dictionary of densities.
In what follows $a \lesssim b$ means that there exists an absolute constant $c > 0$ such that $a \le cb$.

\begin{framed}
  \begin{enumerate}
  \item Theorem~\ref{thm:modelaggregation}: for any $M, n \geq 2$ and $\delta \in (e^{-n}, e^{-1})$, the optimal high-probability rate~\eqref{eq:def-optrate-ma} of Model Aggregation scales as
    \begin{equation*}
      \psi_{\mathrm{MA}} (M, n, \delta)
      \asymp \frac{\log M \log (1/\delta)}{n}.
    \end{equation*}
  \item Theorem~\ref{thm:convexaggregation}: for any $n \geq M \geq 2$ and $\delta \in (e^{-n}, e^{-1})$, the optimal high-probability rate~\eqref{eq:def-optrate-ca} of Convex Aggregation scales as
    \begin{equation*}
      \psi_{\mathrm{CA}} (M, n, \delta)
      \asymp \frac{M + \log M \log (1/\delta)}{n}.
    \end{equation*}
  \end{enumerate}
\end{framed}
Specifically, the lower bounds above have been established in the special case of discrete distributions~\cite{mourtada2025estimation,van2025nearly}; our contribution is to establish matching upper bounds for arbitrary dictionaries.

A standard approach to these problems is to use \textit{metric entropy} estimates: one derives statistical guarantees by analyzing coverings of the relevant class of distributions. As mentioned above, existing implementations of this strategy have not yielded sharp guarantees in our setting. In this work, we develop tools that make a metric-entropy approach sharp. There are two main obstacles: (i) we do not have sharp covering numbers for mixture models in KL divergence, and (ii) even with sharp covering numbers, the usual reductions would still not lead to sharp statistical bounds.

Our approach relies on three ingredients.
First, for both problems we construct a preliminary estimator that is close to $p^\star$ in Hellinger distance.
This metric is more amenable to entropy methods, but for the mixture case we need stronger local covering bounds than were previously available (to the best of our knowledge, this remained open; see \cite{baraud2018rho} for the state of the art). We establish the required entropy estimate (Theorem~\ref{thm:local-entropy-mixtures}) in Section~\ref{sec:local-cover-numb}.
Second, we construct an $\eps$-cover in KL divergence \emph{within} the resulting local Hellinger ball.
Third, we treat this KL cover as a finite set of candidates and feed it into the recent estimator of \cite{vanderhoeven2023high} under bounded density ratios, which returns a distribution whose KL risk is nearly as good as the best candidate in the set.
We briefly gloss over some additional technicalities. In particular, we mix candidates in a specific way to satisfy the conditions required by the estimator. For completeness, we provide a self-contained (less general) version of this estimator in \Cref{sec:highprobabilityonlinetobatch}.

The second ingredient, namely constructing a KL cover of a local Hellinger ball, is a critical step of the proof.
For reasons explained later, it reduces to the following more general geometric problem.
Given a convex compact subset $K \subset \R_+^d$, we would like to find a finite subset $A \subset K$ with controlled cardinality satisfying the following (one-sided) multiplicative approximation guarantee: for every $\theta = (\theta_j)_{1 \leq j \leq d} \in K$, there exists an element $\phi = (\phi_{j})_{1 \leq j \leq d} \in A$ such that $\theta_j \leq c \, \phi_j$ for every $j = 1, \dots, d$, where $c > 1$ is an absolute constant.
We call such a set $A$ a \emph{$c$-ratio cover} of $K$ (Definition~\ref{def:ratio-cover}).

In Section~\ref{sec:domin-subs-conv}, we % provide two different proofs
show
that convex sets admit small ratio covers.
In particular, we establish the following optimal cardinality estimate for ratio covers of compact convex sets, which is one of the main contributions of this work.
We expect this covering result to be of independent interest, beyond its application to estimation of mixture distributions.

\begin{framed}
  \begin{enumerate}
  \item[3.] We show that there exist universal constants $C_1, C_2>1$ such that the following holds: for every $d \geq 1$, every convex compact subset of $\R_+^d$ admits a $C_1$-ratio cover with at most $C_2^d$ elements (\cref{thm:ratio-cover}).
  \end{enumerate}
\end{framed}

With these ingredients in hand, we prove our high-probability guarantees for model aggregation (Section~\ref{sec:model-agg}; using the first and third ingredients) and convex aggregation (Section~\ref{sec:mixture-models}; using all three ingredients).

\subsection{Further applications of ratio covers}
Theorem~\ref{thm:ratio-cover} also appears useful beyond our two main estimation results. We outline a few further applications here, and develop them in detail in Section~\ref{subsection:apply-pareto} and Section~\ref{sec:apps}.

\paragraph{Approximate Pareto curves.}
Let $K \subset \R_+^d$ represent the set of attainable values in a multi-objective optimization problem. For instance, a point $x=(x_1,\dots,x_d)\in K$ could represent producing $x_1$ units of product~1, $x_2$ units of product~2, and so on. In general, the Pareto-optimal set may be infinite. The influential work of Papadimitriou and Yannakakis \cite{papadimitriou2000approximability} shows that there exists a small subset of $K$ that \textit{approximately} dominates all points in $K$. In our terminology, a set that $(1+\eps)$-dominates $K$ is exactly a $(1+\eps)$-ratio cover. Their bounds for general $K$ depend on $d,\eps$ and on the range parameter
$R = \max_{i \in [d]}\frac{\max_{x \in K} x_i}{\min_{y \in K} y_i}$, where here and in what follows $[d] = \{1, \ldots, d\}$.
In Section~\ref{subsection:apply-pareto}, we use our ratio cover machinery (under convexity of $K$) to remove the dependence on $R$. We expect this to be useful in settings where $K$ is convex, for example when $K$ is the feasible region of a linear program.

\paragraph{Expected error guarantees via sharpening Yang--Barron.}
As remarked earlier, Yang and Barron provide a general recipe for converting KL covering numbers into in-expectation error bounds. While very general, their method often introduces an unnecessary logarithmic factor. In Section~\ref{sec:yangbarron-sharp}, we sharpen the Yang--Barron guarantee in settings where the class is convex and amenable to our ratio-cover approach. The plan is: (i) obtain an estimator with squared Hellinger error $\eps$ (often via entropy methods), (ii) use ratio covers to build a KL cover of the local Hellinger ball, and (iii) run the Yang--Barron estimator using this local KL cover. The key improvement is that we only need a KL cover \emph{locally}, and the ratio-cover structure remains helpful even when the initial Hellinger estimator occasionally lands outside the target $\eps$-ball (thanks to \cref{lemma:kl-from-ball}).
This yields our in-expectation guarantee for convex aggregation, scaling as ${\M}/{n}$ up to constants.

\paragraph{KL covering numbers.}
The recent PhD thesis ``Divergence Coverings'' of Tang \cite{tang2022divergence} develops techniques for constructing KL covers and highlights the problem of bounding the KL covering number of the $d$-dimensional simplex. In Section~\ref{subsection:apply-kl-cover}, we review this line of work and show how ratio covers sharpen the KL covering number for the simplex. We emphasize that this section is mainly an illustration of how our techniques can sharpen recent results: in fact, our approach extends immediately beyond the simplex to the convex hull of an arbitrary dictionary of $d$ distributions, and can be used to control corresponding local covering numbers.

\subsection{Related literature}
The literature on aggregation under logarithmic loss is vast; we only highlight lines of work that are closest to our setting.

\paragraph{Sequential prediction, logarithmic loss, and universal coding.}
Sequential prediction with logarithmic loss lies at the intersection of online learning, universal coding, and Bayesian mixture methods.
In universal coding, predictors such as the Krichevsky--Trofimov estimator yield sharp nonasymptotic redundancy guarantees under log-loss \cite{krichevsky1981performance}.
In online learning, log-loss is \emph{mixable} in the sense of Vovk \cite{vovk1990aggregating}, which leads to logarithmic regret for exponential-weights predictors on the simplex; see, e.g., \cite{vovk1990aggregating,vovk1998mixability,cesabianchi2006plg,hazan2007logarithmic}.
Universal portfolio selection \cite{cover1991universal} provides another prominent instance: cumulative log-wealth is formally equivalent to log-loss prediction over the simplex.
Parts of our analysis are closest to recent work converting sequential guarantees into high-probability statistical risk bounds for exp-concave losses via online-to-batch arguments \cite{vanderhoeven2023high}.

\paragraph{Statistical aggregation and progressive mixtures.}
In model aggregation for density estimation, \emph{progressive mixture} rules were introduced and analyzed in \cite{yang2000mixing}, with a closely related information-theoretic perspective in \cite{yang1999information}.
These developments connect to earlier work on Bayes rules and information consistency \cite{barron1987bayes} and to mixture-based approaches to model aggregation \cite{catoni1997mixture}, as well as to online viewpoints on aggregating strategies \cite{desantis1988learning,vovk1990aggregating}.
Mirror averaging, closely related to progressive mixtures, yields oracle inequalities in several aggregation setups (primarily for quadratic loss) \cite{juditsky2008learning}.
The broader aggregation framework and minimax formulations of optimal rates were developed in \cite{nemirovski2000topics,tsybakov2003optimal}.
For deviation bounds, progressive mixtures can be suboptimal in model aggregation settings (shown for square loss in \cite{audibert2007progressive}), motivating alternative constructions not directly driven by regret; in particular, deviation-optimal aggregation for bounded regression losses has been obtained via $Q$-aggregation \cite{dai2012q-aggregation,lecue2014q-aggregation,bellec2017optimal,mourtada2023local}.

\paragraph{Likelihood methods, entropy, and geometry.}
A central tool in nonparametric density estimation is the control of likelihood-based procedures through metric entropy and likelihood-ratio inequalities, as in sieve MLE analyses \cite{WongShen1995} and model-selection/concentration methods \cite{massart2007concentration}.
Estimator selection under Hellinger-type risks and test-based constructions has been developed in \cite{baraud2011estimator} and refined in \cite{baraud2017new,baraud2018rho}.
Closer to our focus, local (bracketing) geometric properties of finite mixture models are studied in \cite{gassiat2014local} for finite location mixtures of a single base density under smoothness assumptions.
In the special case of mixture density estimation over a finite dictionary, optimal KL aggregation rates for the MLE under boundedness assumptions on the dictionary are investigated in \cite{li1999mixture,li1999estimation,rakhlin2005risk, Rigollet2012KLAggregation, ButuceaDelmasDutfoyFischer2017Optimal, dalalyan2018optimal}.
By contrast, our results apply to arbitrary dictionaries of densities without imposing additional structural conditions.

\subsection{Notation and preliminary results}
Let $[k]$ denote $\{1, 2, \ldots, k\}$. $\wh L$ is the normalized negative log-likelihood, namely
\begin{equation}
  \label{eq:def-log-likelihood}
  \wh L(p) = -\frac{1}{n}\sum\limits_{i = 1}^n\log p(X_i).
\end{equation}
Squared Hellinger divergence is denoted as
\begin{equation}
  \label{eq:def-hellinger}
  \hels(p,q) \triangleq \frac{1}{2} \int_{\mathcal X} \big(\sqrt{p(x)}-\sqrt{q(x)} \big)^2\di \mu(x).
\end{equation}
We will use the following relation between KL divergence and the Hellinger distance.
\begin{lemma}
    \label{lem:kltohellinger}
Assume that $p,q$ are two densities on $\X$ with respect to $\mu$.
It holds that
\[
\kl(p,q) \le \frac{2}{(\sqrt{e}-1)^2}\,\hels(p,q)\,
\max\left\{1,\ \sup_{x\in\X}\log\left(\frac{p(x)}{q(x)}\right)\right\}.
\]
\end{lemma}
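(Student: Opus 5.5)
We reduce to a pointwise inequality. Set $M_0 = \max\{1, \sup_x \log(p(x)/q(x))\}$, and write $r = q/p$ on $\{p>0\}$. The KL divergence only integrates over $\{p>0\}$, while $\hels$ also charges the mass of $q$ on $\{p=0\}$; since that extra contribution is nonnegative, it suffices to bound the integral over $\{p>0\}$.

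The first step uses the fact that both densities integrate to one. For $p$, $\int p = 1$. For $q$, $\int_{\{p>0\}} q \le 1$. Hence
\[
\kl(p,q) = \int_{\{p>0\}} p\,\bigl(-\log r + r - 1\bigr)\,\di\mu + \Bigl(1 - \int_{\{p>0\}} q\,\di\mu\Bigr),
\]
and similarly
\[
2\hels(p,q) = \int_{\{p>0\}} p\,(\sqrt r-1)^2\,\di\mu + \Bigl(1-\int_{\{p>0\}} q\,\di\mu\Bigr).
\]
The extra term is the same nonnegative quantity in both identities. So it is enough to find a constant $C$ with $C \ge 1$, relative to the factor $2$, such that pointwise for every $r \ge e^{-M_0}$ we have
\[
\phi(r) := r - 1 - \log r \le c(M_0)\,(\sqrt r-1)^2,
\]
where $c(M_0) = \frac{1}{(\sqrt e-1)^2} M_0$ (note the factor $2/2$ here). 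Applying this inequality to the first terms and bounding the remainder via $c(M_0)\ge 1$ then gives the claim.

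The second step is a one-variable analysis. Substitute $r = t^2$, so the target function becomes
\[
g(t) = \frac{t^2 - 1 - 2\log t}{(t-1)^2}.
\]
I would show that $g$ is decreasing on $(0,\infty)$. This should follow either by differentiating, or by writing $-\log r + r - 1 = \int$ representations. Since $g \to 1$ as $t\to\infty$, we get $g \ge 1$ everywhere, which is consistent with the bound $c(M_0)\ge 1$ we need.

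The worst case is therefore $t = e^{-M_0/2}$, i.e. the smallest admissible ratio. There we need
\[
g(e^{-M/2}) = \frac{e^{-M} - 1 + M}{(1-e^{-M/2})^2} \le \frac{M}{(\sqrt e-1)^2} \quad \text{for } M \ge 1.
\]
To check this, note that at $M=1$ it reads $e^{-1}/(1-e^{-1/2})^2 = 1/(\sqrt e-1)^2$, which holds with equality: indeed $(1-e^{-1/2})^2 e = (\sqrt e -1)^2$. For $M \ge 1$, the numerator is at most $M$ and the denominator $(1-e^{-M/2})^2$ is increasing in $M$. So the quotient is at most $M/(1-e^{-1/2})^2$, which is slightly weaker than the target. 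To sharpen this, I would instead show that $h(M) = (e^{-M}-1+M)/(M(1-e^{-M/2})^2)$ is nonincreasing on $[1,\infty)$; this is the main, if routine, calculus obstacle.

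Assembling the pieces, the pointwise bound gives
\[
\int p\,\phi(r) \le \frac{M_0}{(\sqrt e-1)^2}\int p\,(\sqrt r-1)^2.
\]
The leftover mass term is bounded by the same coefficient, since that coefficient is at least $1$. This yields $\kl \le \frac{M_0}{(\sqrt e-1)^2}\cdot 2\hels$, as claimed.
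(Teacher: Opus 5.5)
Your route is essentially the paper's. With $p/q=1/t^2$, your ratio $g(t)$ is exactly the paper's $\frac{h\log h-h+1}{(\sqrt h-1)^2}$, and your bookkeeping of the leftover mass $1-\int_{\{p>0\}}q\,\di\mu$ is equivalent to the paper's representation $\kl(p,q)=\int q(h\log h-h+1)\,\di\mu$. Everything up to the last step is correct. In particular $g$ is decreasing on $(0,\infty)$: indeed $g'(t)=2k(t)/(t-1)^3$ with $k(t)=2\log t-t+1/t$, where $k(1)=0$ and $k'(t)=-(1-1/t)^2$. Equality also holds at $M=1$, as you say.

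The step that would fail is the one you flag as the main obstacle. The function $h(M)=\frac{M-1+e^{-M}}{M(1-e^{-M/2})^2}$ is \emph{not} nonincreasing on $[1,\infty)$. As $M\to\infty$ one has $h(M)=1-1/M+O(e^{-M/2})$, so $h$ dips below $1$ and then climbs back to $1$; numerically $h(10)\approx 0.912<h(20)\approx 0.950$. This $h$ is the paper's second ratio $\frac{h\log h-h+1}{(\sqrt h-1)^2\log h}$ in its variable $p/q=e^{M}$, so the paper's assertion ``decreasing on $[e,\infty)$'' contains the same slip.

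What you actually need is only $h(M)\le h(1)=(\sqrt e-1)^{-2}$ for $M\ge 1$. This is true, and it is easiest to prove by comparing a difference rather than a ratio. Let $c=(\sqrt e-1)^{-2}\approx 2.376$, $v=e^{-M/2}$, and
\[
\Psi(M)=cM(1-v)^2-(M-1+v^2)=(1-v)^2\bigl(cM-g(v)\bigr).
\]
Then $\Psi(1)=0$, and for $M\ge 1$,
\[
\Psi'(M)=(1-v)\bigl[c-1+v\bigl(c(M-1)-1\bigr)\bigr]\ge (1-v)\bigl(c-1-e^{-1/2}\bigr)>0,
\]
since $c-1-e^{-1/2}\approx 0.77$. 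Hence $g(e^{-M/2})\le cM$ for all $M\ge 1$. With this replacement for the monotonicity claim, the rest of your argument goes through unchanged.
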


For the sake of completeness we present the proof of this result in \Cref{sec:remainingproofs}. We note that similar identities appear in \citep[Lemma 5]{birge1998minimum}, \cite[Lemma 4.4]{birge1983approximation} with the same constants appearing in \cite[Fact 4]{tang2022divergence}, but for a definition of the Hellinger distance without $1/2$ in front. See also \cite{birge1993rates}.

Our next preliminary result is a corollary of Theorem 1 in \cite{vanderhoeven2023high}, which is stated there for more general exponentially concave losses. We present this result in the form required for our future analysis.

\begin{lemma}[Model aggregation with bounded density ratios]
  \label{lem:onlinetobatch}
  Assume $X_{1},\ldots,X_{n}$ are
\iid random variables with arbitrary and unknown density $p^\star$ on $(\X, \mu)$.
Let $p_{1},\ldots,p_{\M}$ be densities such that for some $m>0$,
\[
\sup_{x \in \X}\max_{i,j\in[M]}\left|\log\left(\frac{p_i(x)}{p_j(x)}\right)\right|\le m.
\]
There is an estimator $\widehat{p}$ such that for every $\delta\in(0,1)$, with probability at least $1-\delta$,
\begin{equation}
\label{eq:main-theorem-bound}
\kll{p^\star}{\widehat{p}} \le \min_{j \in [M]}\kll{p^\star}{p_j} + \frac{2\log\left(M\right)+\frac{25}{4}\left(e-2\right)\left(m+1\right)\log\left(\frac{1}{\delta}\right)}{n} .
\end{equation}
\end{lemma}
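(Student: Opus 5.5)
The plan is to obtain the estimator through an online-to-batch conversion of exponential weights on the log-loss, with a Bernstein-type martingale concentration step that exploits exp-concavity. Here $\mathrm{KL}(p^\star,q)=\E[\log(p^\star/q)(X)]$.

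\textbf{Step 1 (sequential predictor).} Let $\pi_t$ be the exponential-weights posterior after $t$ observations: uniform prior, learning rate $\eta\le 1$, weights $\propto\prod_{s\le t}p_j(X_s)^\eta$. The prediction is $q_t=\sum_j \pi_t(j)p_j$. Define the loss differences $Z_t=\log(p_{j^\star}(X_t)/q_{t-1}(X_t))$, where $j^\star$ minimizes $\kl(p^\star,p_j)$. Because every $q_t$ is a mixture of the $p_j$, the ratio bound gives $|Z_t|\le m$.

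\textbf{Step 2 (expectation to excess risk).} For the martingale differences $Z_t-\E_{t-1}Z_t$, where $\E_{t-1}Z_t=\kl(p^\star,q_{t-1})-\kl(p^\star,p_{j^\star})$, we need a variance bound. The key tool is a Bernstein inequality for log-loss: by the exp-concavity identity $\E_{p^\star}[\sqrt{q/p}]\le 1$-style arguments, the conditional variance of $Z_t$ is at most $c(m+1)$ times the conditional excess risk. Freedman's inequality then yields, with probability $1-\delta$,
\[
\sum_t \E_{t-1}Z_t \le 2\sum_t Z_t + c(m+1)\log(1/\delta).
\]
This gives the constant $\tfrac{25}{4}(e-2)$, coming from $e^x\le 1+x+(e-2)x^2$ for $x\le1$, after rescaling by a factor $1/(m+1)$.

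\textbf{Step 3 (regret).} The exponential-weights regret gives $\sum_t Z_t\le \log M$ deterministically, using mixability of log-loss with $\eta=1$. Combining this with Step 2 bounds $\sum_t [\kl(p^\star,q_{t-1})-\kl(p^\star,p_{j^\star})]$ by $2\log M + C(m+1)\log(1/\delta)$.

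\textbf{Step 4 (output).} Set $\widehat p=\frac1n\sum_{t}q_{t-1}$. Convexity of $q\mapsto\kl(p^\star,q)$ then transfers the averaged bound to $\widehat p$.

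The main obstacle is Step 2, the variance-to-mean (Bernstein) condition. The naive bound $\mathrm{Var}\le m^2$ gives only $m^2$ dependence. A linear-in-$m$ bound requires comparing $\E[(\log r)^2]$ with $\E[\log r]$ for ratios $r$ bounded in $[e^{-m},e^m]$, which is done via Hellinger: $(\log r)^2\lesssim (m+1)(\sqrt r-1)^2$, plus Lemma~\ref{lem:kltohellinger}-type arguments. This step requires care because the excess risk is relative to $p_{j^\star}$ rather than $p^\star$. Since the statement is a corollary of \cite{vanderhoeven2023high}, the cleanest route is to verify their $\eta$-exp-concavity hypothesis with $\eta=1$ and their loss range $m$, and then read off the constants.
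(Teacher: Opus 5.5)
There is a genuine gap in Step~2. The lemma allows an arbitrary $p^\star$, and your comparator $p_{j^\star}$ is only the best \emph{vertex}, while the predictions $q_{t-1}$ range over $\conv\{p_1,\dots,p_M\}$. So $\E_{t-1}Z_t=\kl(p^\star,q_{t-1})-\kl(p^\star,p_{j^\star})$ can be negative, and no inequality $\E_{t-1}[Z_t^2]\le c(m+1)\,\E_{t-1}Z_t$ can hold. Concretely, let $p_1,p_2$ be Bernoulli with parameters $1/4$ and $3/4$ (so $m=\log 3$), and let $p^\star$ be Bernoulli with parameter $1/2$. Then $\kl(p^\star,p_1)=\kl(p^\star,p_2)>0$, and the uniform-prior prediction at $t=1$ is $q_0=p^\star$. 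Hence $\E_0Z_1=-\kl(p^\star,p_1)<0$ while $\E_0[Z_1^2]>0$. Without that Bernstein condition, the plain Bayes regret $\sum_tZ_t\le\log M$ of Step~3 gives you no control of the variance term $\sum_t\E_{t-1}[Z_t^2]$ in Freedman's inequality, so the display in Step~2 does not follow.

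This is not a corner case: in Theorems~\ref{thm:modelaggregation} and~\ref{thm:convexaggregation} the lemma is applied with $p^\star\notin\wh\Qs_2$, so the misspecified form is exactly what is used. In the well-specified case $p^\star=p_{j^\star}$ your argument does go through, with a different constant. There, however, the right variance bound comes from Lemma~\ref{lem:logloss-Tstyle} plus Gibbs' inequality, giving $\E_{p^\star}[\log^2(p^\star/q)]\le 4(m+1)\kl(p^\star,q)$. The pointwise Hellinger bound $(\log r)^2\lesssim(m+1)(\sqrt r-1)^2$ that you propose is false near $r=e^{-m}$, where the left side is $m^2$ and the right side is of order $m+1$.

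The missing idea is to make the regret inequality itself carry a negative quadratic term, so that no variance-to-excess-risk comparison is ever needed. The paper modifies the progressive mixture so that the weights use the shifted factors $\frac12p_i(X_k)+\frac12\widehat p_k(X_k)$. This gives a $\log M$ regret for the midpoint losses $\tilde\ell_t(q)=-\log\bigl(\frac12\widehat p_t(X_t)+\frac12 q(X_t)\bigr)$ (Lemma~\ref{eq:regretboundforshiftedloss}). Applying Lemma~\ref{lem:logloss-Tstyle} pointwise with $x=\widehat p_t(X_t)$ and $y=p_j(X_t)$ then yields, deterministically and for every $j$, $\sum_tr_t\le2\log M-\sum_tv_t$, where $r_t=\log(p_j(X_t)/\widehat p_t(X_t))$ and $v_t=r_t^2/(4(m+1))$. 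Freedman's inequality is applied to $Z_t=r_t+v_t$, whose conditional second moment is at most $4(m+1)c_m^2\,\E_{t-1}v_t$ with $c_m=(5m+4)/(4(m+1))$. With $\lambda=1/(4(m+1)(e-2)c_m^2)$, the Freedman variance term is at most $\sum_t\E_{t-1}v_t$ and cancels against the $v$-part of $\sum_t\E_{t-1}Z_t$. This leaves $\sum_t\E_{t-1}r_t\le\sum_t(r_t+v_t)+\log(1/\delta)/\lambda\le2\log M+\frac{25}{4}(e-2)(m+1)\log(1/\delta)$, valid for any $p^\star$. Your fallback of reading the result off the cited theorem amounts to this construction, but then the estimator is this modified mixture, not the Bayes mixture of your Step~1.
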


Exact construction of the estimator as well as the self-contained proof of \Cref{lem:onlinetobatch} are presented in \Cref{sec:highprobabilityonlinetobatch}. As a short comment, the estimator $\widehat{p}$ is a specific variance-adjusted version of the standard progressive mixture rule. Note that \Cref{lem:onlinetobatch}
does not solve the model aggregation problem introduced above as it requires that the density ratios are bounded, an assumption we aim to avoid in this work.

\subsection{Structure of the paper}
In \Cref{sec:model-agg} we study model aggregation over a finite family of densities and prove \Cref{thm:modelaggregation}. 
In \Cref{sec:local-cover-numb}, we establish a sharp local covering bound in Hellinger distance for mixtures (\Cref{thm:local-entropy-mixtures}).
\Cref{sec:domin-subs-conv} is devoted to ratio covers, including the proof of \Cref{thm:ratio-cover} and additional results on approximate Pareto curves.
These tools are combined in \Cref{sec:mixture-models} to obtain the high-probability bound for convex aggregation (\Cref{thm:convexaggregation}). 
Further applications of ratio covers are presented in \Cref{sec:apps}: expected KL guarantees via a refined Yang--Barron argument (\Cref{sec:yangbarron-sharp}) and KL covering numbers (\Cref{subsection:apply-kl-cover}). 
We conclude with the misspecified setting in \Cref{sec:misspecified}. 

The appendices contain a proof of the high-probability online-to-batch conversion (\Cref{sec:highprobabilityonlinetobatch}), as well as lower bounds illustrating the suboptimality of classical estimators (\Cref{sec:suboptimal}). 
In particular, we obtain matching upper and lower bounds on the tail behavior of Bayesian model averaging in the case of two densities, which falls short of the ideal deviation bound.
Finally, we include a detailed presentation of the Birg\'{e}--Le Cam tournament (\Cref{sec:blc-local-pointwise}).

\section{Model aggregation}\label{sec:model-agg}

In this section, we provide a complete analysis of model aggregation over $\M$ individual densities, where no assumptions are made on the individual densities.
Despite its novelty, this result is technically less involved than the mixture case and can be seen as a preliminary building block introducing ideas that will also appear in this more complex setting.
In particular, even at the preliminary stage, we do not need to use the Birg\'{e}-Le Cam tournaments, but rather exploit simple properties of the MLE in this case.

\begin{theorem}
\label{thm:modelaggregation}
    Let $X_{1}, \ldots, X_n$ be an i.i.d.\ sample from some unknown density $p^\star$ that belongs to a known class of arbitrary densities $\mathcal{P} = \{p_1, \ldots, p_\M\}$ defined on the space $\X$ with respect to the measure $\mu$. There is a density estimator $\widehat{p}$ such that, with probability at least $1 - \delta$,
    \[
    \kl\left(p^\star, \widehat{p}\right) \le \min\left\{\frac{75\log(2M)\log(2/\delta)}{n}, \log(M)\right\}.
    \]
This bound is optimal, in the sense that there are $\M$ distributions such that no estimator can perform better up to universal constant factors, as shown in \cite[Lemma~1]{mourtada2025estimation} and~\cite[Theorem~11]{van2025nearly}. 
\end{theorem}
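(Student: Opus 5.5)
The $\log M$ part of the minimum is immediate: the uniform mixture $\bar p = \frac1M\sum_j p_j$ satisfies $\bar p \ge p^\star/M$ pointwise, so $\kll{p^\star}{\bar p}\le \log M$. The final estimator will itself be a mixture containing $\bar p$ with constant weight, which costs at most $\log 2$ in the supremum of log-ratios and keeps this trivial bound available. For the main bound, I will follow the three-ingredient strategy from the introduction, restricted to the first and third ingredients: first localize $p^\star$ in Hellinger distance using the MLE, then form smoothed candidates with bounded density ratios, and finally apply \Cref{lem:onlinetobatch}.

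\emph{Step 1 (Hellinger localization via the MLE).} Split the sample into two halves. On the first half, let $\hat j$ be the MLE index. The standard argument is a union bound over $j$ combined with the Chernoff bound $\E_{p^\star}\sqrt{p_j/p^\star}=1-\hels(p^\star,p_j)$. It gives, with probability $1-\delta/2$,
\[
\hels(p^\star,p_{\hat j})\lesssim \eps,\qquad \eps:=\frac{\log(2M/\delta)}{n}.
\]
I will use a more robust version. Let $S$ be the set of indices $j$ whose likelihood is within a factor $e^{c n\eps}$ of the maximum. The same argument shows $p^\star\in S$ (trivially) and that every $j\in S$ satisfies $\hels(p^\star,p_j)\lesssim\eps$. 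This is simple MLE reasoning with no tournament needed, as the section announces. The issue is that $\log(2M/\delta)$ is additive, while we need the product $\log M\log(1/\delta)$. Since $\eps\le \log(2M)\log(2/\delta)/n$ up to constants, the localization error is fine; the real difficulty lies elsewhere.

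\emph{Step 2 (bounded-ratio candidates).} For each $j$ define $q_j=\tfrac12 p_j+\tfrac12 \bar p_S$, where $\bar p_S$ is the uniform mixture over $S$. Every pair satisfies $q_i/q_j\le 1+2|S|\le 3M$, so $m\lesssim\log M$ in \Cref{lem:onlinetobatch}. Since $p^\star\in S$, we get $p^\star/q_{j^\star}\le 2$. Together with $\hels(p^\star,q_{j^\star})\lesssim\eps$, \Cref{lem:kltohellinger} gives $\kll{p^\star}{q_{j^\star}}\lesssim \eps$. In fact $q_{j^\star}$ can be replaced by $q^\star=\tfrac12p^\star+\tfrac12\bar p_S$, whose KL divergence is $O(\hels)$. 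Applying \Cref{lem:onlinetobatch} on the second half, conditionally on $S$, with $m\lesssim\log M$ yields with probability $1-\delta/2$
\[
\kll{p^\star}{\hat p}\lesssim \eps+\frac{\log M+\log M\log(1/\delta)}{n}\lesssim\frac{\log(2M)\log(2/\delta)}{n}.
\]
Tracking constants should produce the $75$. The clean route is to mix one more time with $\bar p$ at the end, so that $\min\{\cdot,\log M\}$ holds.

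\emph{Main obstacle.} The delicate point is ensuring $p^\star\in S$ (or that some candidate has small KL divergence, not merely small Hellinger distance) without ratio assumptions, since a Hellinger-close $p_{\hat j}$ alone can have infinite KL divergence. Taking $S$ as the whole near-maximum-likelihood set, and using the mixture $\bar p_S$ in each candidate, is designed exactly to handle this. If $S$ turns out awkward, the fallback is to let $S=[M]$: the $m\lesssim\log M$ bound still holds, and the Hellinger step is used only to certify that $q_{j^\star}$ has KL divergence $O(\hels(p^\star,\cdot))$. In that case the localization is needed only through \Cref{lem:kltohellinger} applied to $q^\star$ versus $p^\star$, and possibly Step 1 is unnecessary altogether.
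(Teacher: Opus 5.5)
Your Steps 1--2 are the paper's proof. $S$ is the near-MLE set $\wh\Qs_1$ of \Cref{lem:almostmle}, and your candidates $q_j=\tfrac12 p_j+\tfrac12\bar p_S$ form $\wh\Qs_2$; they must be restricted to $j\in S$, since $p_i/\bar p_S$ is unbounded for $i\notin S$. The comparator $\tfrac12 p^\star+\tfrac12\bar p_S$ is then handled exactly as in the paper: ratio bound $2$, convexity of $\hels$, \Cref{lem:kltohellinger}, and \Cref{lem:onlinetobatch} on the second half. However, one step is misjustified. The claim $p^\star\in S$ is not trivial, because the maximal likelihood may exceed that of $p^\star$ by a large factor, and the design of $S$ does not by itself ensure it. You need the second tail bound of \Cref{lem:almostmle}. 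By Markov, since the likelihood ratio has expectation at most $1$, $\P\bigl(\prod_{i} p_j(X_i)/p^\star(X_i)\ge e^{nt}\bigr)\le e^{-nt}$. A union bound over $j$ then gives $\P\bigl(\wh L(p^\star)-\min_q \wh L(q)\ge t\bigr)\le M e^{-nt}$, so a threshold $t\asymp \log(2M/\delta)/n$ works.

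Two of your side remarks are wrong. First, the fallback $S=[M]$ (``Step 1 possibly unnecessary'') fails: without localization the best candidate $\tfrac12 p^\star+\tfrac12\bar p$ is not Hellinger-close to $p^\star$. For densities with disjoint supports, every mixture of the candidates equals at most $\tfrac{M+1}{2M}p^\star$ on the support of $p^\star$, so its KL divergence from $p^\star$ is at least $\log\frac{2M}{M+1}$, a constant. Localization is precisely what gives $\hels(p^\star,\bar p_S)\lesssim\eps$. Second, ``mixing one more time with $\bar p$'' does not produce the $\log M$ branch. Take $M=2$ densities with disjoint supports: then $S=\{j^\star\}$, $\hat p=p^\star$, and $\kll{p^\star}{\tfrac12\hat p+\tfrac12\bar p}=\log(4/3)$, which destroys the $1/n$ rate. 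A weight of order $1/n$ on $\bar p$ would only give $\log(nM)$, and the built-in $\tfrac12\bar p_S$ gives $\log(2M)$ rather than $\log M$, and only on the good event. Since both terms of the minimum depend only on $(n,M,\delta)$, simply output $\bar p$ whenever the first term exceeds $\log M$, as the paper does.
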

For the sake of presentation we assume that we are given a sample of size $2n$. In general, provided that the sample size is at least $2$ we can split the sample into two equally sized independent parts, while dropping at most one element of the original sample.
This will only lead to a larger absolute constant in the final statement. We describe the estimator $\widehat{p}$ as follows:
\begin{framed}
\begin{itemize}
\item Use the first $n$ observations (defining $\wh L$) to construct a data-dependent set of almost empirical risk minimizers
\[
\wh \Qs_1 = \left\{ p \in \mathcal P : \wh L (p) - \min_{q \in \mathcal{P}} \wh L (q) \leq \frac{2 \log (4\M/\delta)}{n} \right\}.
\]
\item Construct a (random) density class 
\begin{equation}
\label{eq:wprimeset}
\wh \Qs_2 = \bigg\{\frac{1}{2}p + \frac{1}{2|\wh \Qs_1|}\sum\limits_{q \in \wh \Qs_1}q: p \in \wh \Qs_1\bigg\}.
\end{equation}
\item Using the second half of the sample, run the density estimator of \Cref{lem:onlinetobatch} over $\wh \Qs_2$ to output the final density $\widehat{p}$. Finally, if $75\log(2M)\log(2/\delta) \ge 2n\log(M)$ output instead $\widehat{p} = \frac{1}{M}\sum\limits_{p \in \mathcal{P}}p$.
\end{itemize}
\end{framed}

Let us quickly explain why the procedure works. We first show that the construction of the set $\wh \Qs_1$ of almost empirical likelihood maximizers provides a set of small Hellinger radius that contains $p^\star$. However, closeness in Hellinger radius does not imply closeness in the Kullback--Leibler divergence.
This is where the smoothed class $\wh \Qs_2$ helps: the construction does not inflate the Hellinger distance, but it allows one to control the density ratio. However, as an inevitable price of smoothing $p^\star$ does not have to belong to $\wh \Qs_2$.
This is where now the regret-based techniques of Lemma~\ref{lem:onlinetobatch} help, since they provide both the optimal tail bounds and allow to bypass the problem $p^\star \notin \wh \Qs_2$. Remarkably, the final estimator is optimal up to multiplicative constant factors.

The proof of Theorem \ref{thm:modelaggregation} requires a preliminary lemma. The first result is standard and similar derivations can be found in \eg \cite{WongShen1995}. 

\begin{lemma}
\label{lem:almostmle}
Let $\mathcal{P}=\{p_1,\ldots,p_\M\}$ be a finite set of densities with respect to a measure $\mu$ on $\mathcal{X}$.
Fix $p^\star\in\mathcal{P}$ and let $X_1,\ldots,X_n$ be i.i.d.\ distributed according to $p^\star$.
For $\delta \in (0, 1)$, set
\[
\wh \Qs_1
=\bigg\{p\in\mathcal{P}:\ \wh L(p)-\min_{q\in\mathcal{P}}\wh L(q)\le\frac{2\log(2\M/\delta)}{n}\bigg\}.
\]
Then, with probability at least $1-\delta$, one has $p^\star \in \wh \Qs_1$ and
\[
\max_{p \in \wh \Qs_1}\hel^2(p^\star,p) \le\frac{2\log(2\M/\delta)}{n}.
\]
\end{lemma}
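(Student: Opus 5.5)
The plan is the classical Chernoff bound applied to square roots of likelihood ratios, followed by a union bound over the $\M$ candidates. The basic identity is this: for any density $p$,
\[
\E_{p^\star}\Big[\sqrt{p(X)/p^\star(X)}\Big] = \int \sqrt{p\,p^\star}\,\di\mu = 1-\hels(p^\star,p) \le \exp\big(-\hels(p^\star,p)\big).
\]
On $\{p^\star=0\}$ the ratio is never evaluated almost surely, so the identity needs no care there. By independence,
\[
\E\Big[\exp\Big(\tfrac{n}{2}\big(\wh L(p^\star)-\wh L(p)\big)\Big)\Big] = \E\Big[\prod_{i=1}^n \sqrt{p(X_i)/p^\star(X_i)}\Big] \le \exp\big(-n\hels(p^\star,p)\big).
\]
Markov's inequality then gives, for each fixed $p\in\mathcal P$ and with probability at least $1-\delta/(2\M)$,
\[
\tfrac{n}{2}\big(\wh L(p^\star)-\wh L(p)\big) \le -n\hels(p^\star,p)+\log(2\M/\delta).
\]
Equivalently,
\[
\wh L(p) - \wh L(p^\star) \ge 2\hels(p^\star,p) - \frac{2\log(2\M/\delta)}{n}.
\]
A union bound over the $\M$ elements of $\mathcal P$ makes this hold simultaneously for all $p$ with probability at least $1-\delta/2$.

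From the simultaneous inequality, both conclusions follow. Since $\hels\ge 0$, we get $\wh L(p^\star) \le \wh L(p) + 2\log(2\M/\delta)/n$ for every $p$. In particular this holds for the minimizer, so $p^\star\in\wh\Qs_1$. For any $p\in\wh\Qs_1$ we have $\wh L(p)-\wh L(p^\star)\le \wh L(p)-\min_q \wh L(q)\le 2\log(2\M/\delta)/n$. Plugging this into the inequality gives
\[
2\hels(p^\star,p)\le \frac{4\log(2\M/\delta)}{n},
\]
that is, $\hels(p^\star,p)\le 2\log(2\M/\delta)/n$, which is the claim. (The event actually has probability at least $1-\delta/2$, which is more than needed.)

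The only delicate point is bookkeeping, not mathematics. I need to check that the factor $1/2$ in the Hellinger definition matches the identity $\int\sqrt{pp^\star}\,\di\mu=1-\hels$, which it does. I also need to handle points where $p(X_i)=0$: there $\wh L(p)=+\infty$, $p\notin\wh\Qs_1$, and the moment bound remains valid. No earlier result from the paper is needed beyond these elementary facts.
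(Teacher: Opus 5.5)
Your proof is correct and follows essentially the paper's route: Markov's inequality applied to $\prod_i \sqrt{p(X_i)/p^\star(X_i)}$, the bound $\int\sqrt{p\,p^\star}\,\di\mu = 1-\hel^2(p^\star,p)\le \exp(-\hel^2(p^\star,p))$, and a union bound over $\mathcal P$. The only difference is bookkeeping. The paper uses two separate union bounds, each costing $\delta/2$: one at threshold $+t$ over densities with $\hel^2(p^\star,p)\ge t$, and one at threshold $-t$ using only $\int\sqrt{p\,p^\star}\,\di\mu\le 1$. Your single inequality $\wh L(p)-\wh L(p^\star)\ge 2\hel^2(p^\star,p)-2\log(2\M/\delta)/n$ gives both conclusions at once, on an event of probability at least $1-\delta/2$.
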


\begin{proof}
Fix $t>0$. For any $p\in\mathcal{P}$, by Markov's inequality,
\begin{align*}
\P\left(\wh L(p)-\wh L(p^\star)\le t\right)
&=\P\left(\sqrt{\frac{\prod_{i=1}^n p(X_i)}{\prod_{i=1}^n p^\star(X_i)}}\ge \exp(-nt/2)\right)\\
&\le \exp(nt/2)\,\E\left[\sqrt{\frac{\prod_{i=1}^n p(X_i)}{\prod_{i=1}^n p^\star(X_i)}}\right]\\
&= \exp(nt/2)\,\big(1-\hel^2(p^\star,p)\big)^{n}
\le\exp\Big(\tfrac{nt}{2}-n\hel^2(p^\star,p)\Big),
\end{align*}
where we used $1-u\le \exp(-u)$. Let $\mathcal{P}_t=\{p\in\mathcal{P}:\hel^2(p^\star,p)\ge t\}$. By a union bound,
\[
\P\left(\exists p\in\mathcal{P}_t:\ \wh L(p)-\wh L(p^\star)\le t\right)
\le\sum_{p\in\mathcal{P}_t} \exp(-nt/2)
\le M\,\exp(-nt/2).
\]
Note that for any $p\in\mathcal{P}$,
$\wh L(p)-\min_{q\in\mathcal{P}}\wh L(q)\le t$ implies $\wh L(p)-\wh L(p^\star)\le t$,
since $\min_{q \in \mathcal{P}}\wh L(q)\le \wh L(p^\star)$. Therefore,
\[
\P\left(\exists p\in\mathcal{P}: \wh L(p)-\min_{q \in \mathcal{P}} \wh L(q)\le t\ \text{and}\ \hel^2(p^\star,p)\ge t\right) \le M \exp(-nt/2).
\]
For the second part, apply the same argument with $-t$ in place of $t$. For any $p\in\mathcal{P}$, it holds that
\begin{align*}
\P\left(\wh L(p)-\wh L(p^\star)\le -t\right)
&=\P\left(\sqrt{\frac{\prod_{i=1}^n p(X_i)}{\prod_{i=1}^n p^\star(X_i)}}\ge \exp(nt/2)\right)
\\
&\le \exp(-nt/2)\,\Big(\int\sqrt{p(x)p^\star(x)}\,d\mu(x)\Big)^{n}
\le \exp(-nt/2).
\end{align*}
By a union bound,
\[
\P\left(\min_{q\in\mathcal{P}} \wh L(q) \le \wh L(p^\star)-t\right)
\le \sum_{p\in\mathcal{P}} \P\left(\wh L(p)-\wh L(p^\star)\le -t\right)
\le M \exp(-nt/2).
\]
Equivalently, we have
\[
\P\left(\wh L(p^\star)-\min_{q\in\mathcal{P}} \wh L(q) \geq t\right)\le M \exp(-nt/2).
\]
Choose $t={2\log(2\M/\delta)}/{n}$. Then each of the two bad events above has probability at most $\delta/2$, so by a union bound they fail simultaneously with probability at most $\delta$. Hence, with probability at least $1-\delta$, every $p\in\wh \Qs_1$ satisfies $\hels(p^\star,p) \leq t$, and also $\wh L(p^\star)-\min_{q\in\mathcal{P}} \wh L(q)\le t$ hence $p^\star\in \wh \Qs_1$. This yields the claim.
\end{proof}

\paragraph{Proof of Theorem \ref{thm:modelaggregation}.}
Fix $\delta \in (0, 1)$. By Lemma \ref{lem:almostmle}, with probability at least $1-\delta/2$, the set
\[
\wh \Qs_1 = \left\{ p \in \mathcal P : \wh L (p) - \min_{q \in \mathcal{P}} \wh L (q) \leq \frac{2 \log (4\M/\delta)}{n} \right\}
\]
contains $p^\star$ and has the Hellinger diameter at most $2\sqrt{{2\log(4\M/\delta)}/{n}}$. From now on we work on this event and focus on the second half of the sample. Using the second half of the sample we output the density $\widehat{p}$ obtained using the procedure of \Cref{lem:onlinetobatch} with respect to $\wh \Qs_2$. 
Let
\begin{equation*}
  \widetilde{p} = \frac{1}{2}p^\star + \frac{1}{2|\wh \Qs_1|}\sum\limits_{q \in \wh \Qs_1}q.
\end{equation*}
Note that $\widetilde{p} \in \wh \Qs_2$ given by \eqref{eq:wprimeset}. Since we aim to use \Cref{lem:onlinetobatch} to bound $\kll{p^\star}{\widehat{p}}$, we will first verify its condition on the densities. 
Observe that by construction for each $p, q \in \wh \Qs_2$ we have
\[
\sup_{p,q \in \wh \Qs_2}\sup_{x\in\X}\log\left(\frac{p(x)}{q(x)}\right) \le \log\bigl(1 + |\wh \Qs_1|\bigr) \le \log(2M).
\]
Therefore, by  \Cref{lem:onlinetobatch} we have, with probability at least $1 - \delta/2$,
\begin{equation}
\kll{p^\star}{\widehat{p}} \le \kll{p^\star}{\widetilde{p}} + \frac{2\log(M) + \frac{25}{4}\left(e-2\right)\left(\log(2M)+1\right)\log\left({2}/{\delta}\right)}{n}.\label{eq:model-kl}
\end{equation}
Finally, we have by Lemma \ref{lem:kltohellinger} and the convexity of the Hellinger distance squared, observing that the ratio of $p^\star$ and $\frac{1}{2}p^\star +\frac{1}{2|\wh \Qs_1|}\sum\limits_{q \in \wh \Qs_1}q$ is bounded by $2$,
\begin{align*}
\kl(p^\star, \widetilde{p}) &\le \frac{2\max\{1, \log(2)\}}{(\sqrt{e}-1)^2}\hels\left(p^\star, \frac{1}{2}p^\star + \frac{1}{2|\wh \Qs_1|}\sum\limits_{q \in \wh \Qs_1}q\right)
\\
&\le \frac{1}{(\sqrt{e}-1)^2}\hels\left(p^\star, \frac{1}{|\wh \Qs_1|}\sum\limits_{q \in \wh \Qs_1}q\right) \le \frac{2\log (4\M/\delta)}{(\sqrt{e}-1)^2n}.
\end{align*}
Plugging this into \eqref{eq:model-kl} and using the union bound, we have with probability at least $1 - \delta$,
\begin{align*}
\kl(p^\star, \widehat{p}) &\le \frac{2\log(M) + \frac{25}{4}\left(e-2\right)\left(\log(2M)+1\right)\log\left(\frac{2}{\delta}\right)}{n} + \frac{2\log (4\M/\delta)}{(\sqrt{e}-1)^2n}
\\
&\le \frac{25\log(2M)\log(2/\delta)}{n}.
\end{align*}
Note that we used a sample of size $2n$; for any original sample size at least $2$, splitting into two equal parts (dropping at most one observation) increases the absolute constant by at most a factor of $3$ (the worst case is a total sample size of $3$). Finally, if $75\log(2M)\log(2/\delta) \ge 2n\log(M)$, we output $\widehat{p} = \frac{1}{M}\sum\limits_{p \in \mathcal{P}}p$, which guarantees $\kl(p^\star, \widehat{p}) \le \log(M)$. The claim follows. \qed

\section{Local Hellinger entropy for mixture models
}
\label{sec:local-cover-numb}

In this section we study the local Hellinger geometry of convex hulls of densities. This result already suffices to establish the optimal rate for
estimation of mixture distributions under Hellinger loss.
By contrast, the best previously known statistical rates incur additional logarithmic factors due to reliance on VC-type covering arguments; see, for instance, \cite[Proposition~7 and Corollary~3]{baraud2018rho}.
Our estimation guarantee in Hellinger distance is given by the bound \eqref{eq:optestimationinhellinger} below.

We first introduce notation. Let $\M \geq 2$ and, as above, let $\mathcal{P} = \{p_1, \dots, p_\M\}$ be a set of densities on $\mathcal{X}$ with respect to $\mu$.
Given a set of densities $\mathcal P$, define its local Hellinger entropy by
\begin{equation}
\label{eq:localentropy}
\nloc(\mathcal{P},\eps)
=
\sup\limits_{p^\star \in \mathcal P}\sup_{\eta\ge \eps}\ N_{\hel}\big(B_{\hel}(p^\star,\eta)\cap\mathcal{P},\eta/2\big),
\end{equation}
where $B_{\hel}(p,r)=\{q \text{ is a density on } (\X, \mu):\hel(p,q)\le r\}$ and $N_{\hel}(\mathcal{F},\eps)$ is the covering number of $\mathcal{F}$ in the Hellinger metric $\hel$ at scale $\eps$. For any $\theta = (\theta_1, \dots, \theta_\M) \in \Delta_{\M-1}$, where in what follows
\[
\Delta_{\M-1} = \bigg\{\theta \in \R^M_{+}: \sum_{j = 1}^M \theta_{j} = 1 \bigg\},
\]
we define $p_\theta = \sum_{j=1}^M \theta_j p_j$. It is convenient to define the pseudo-metric $d_{\hel}$ on the simplex $\Delta_{\M-1}$, which is the Hellinger distance between mixtures, namely for any $\theta, \theta' \in \Delta_{\M-1}$, one has $d_{\hel} (\theta, \theta') = \hel (p_\theta, p_{\theta'})$. The following result provides a universal upper bound on local Hellinger covering numbers of the class of mixtures of $\M$ densities.

\begin{theorem}
  \label{thm:local-entropy-mixtures}
  For any finite dictionary of densities $\mathcal{P}=\{p_1,\dots,p_\M\}$ on $\mathcal X$ with respect to the measure $\mu$ and any $\varepsilon>0$, one has
  \[
  \nloc\big(\conv(\mathcal P),\varepsilon\big)\le 64^{\M-1}.
  \]
\end{theorem}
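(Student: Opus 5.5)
The plan is a volumetric packing argument in parameter space. It uses only the joint convexity of the squared Hellinger distance, and never any property of the individual densities, which is why the bound is distribution-free.

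Fix $p^\star = p_{\theta^\star} \in \conv(\mathcal P)$ and $\eta \ge \eps$. Work with the set
\[
S = \{\theta \in \Delta_{\M-1} : d_{\hel}(\theta,\theta^\star) \le \eta\}.
\]
Its image under $\theta\mapsto p_\theta$ is exactly $B_{\hel}(p^\star,\eta)\cap\conv(\mathcal P)$. A maximal $\eta/2$-separated subset of this image, in $\hel$, is an $\eta/2$-cover. So it suffices to show that any $\theta_1,\dots,\theta_N \in S$ with $d_{\hel}(\theta_i,\theta_j) > \eta/2$ for $i\neq j$ satisfy $N \le 64^{\M-1}$. The separation can be taken strict; otherwise one runs the argument with $\eta/2-\epsilon'$ and lets $\epsilon'\to 0$.

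\textbf{Step 1 (convexity).} Squared Hellinger is an $f$-divergence, hence jointly convex. Since $\theta\mapsto p_\theta$ is affine, $\theta\mapsto \hels(p_\theta,p_{\theta^\star})$ is convex on the simplex. Therefore $S$ is a compact convex subset of the $(\M-1)$-dimensional affine hull of $\Delta_{\M-1}$.

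Let $k \le \M-1$ be the dimension of the affine hull of $S$, and measure volumes by $k$-dimensional Lebesgue measure there. In that hull $S$ has nonempty relative interior, so $\vol(S)>0$. If $S$ is a single point the claim is trivial.

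\textbf{Step 2 (shrunken copies).} Set $\lambda = 1/64$ and define $T_i = \theta_i + \lambda(S-\theta_i)$. By convexity of $S$, each $T_i\subset S$, and $\vol(T_i) = \lambda^k \vol(S)$.

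Any $x\in T_i$ has the form $x = (1-\lambda)\theta_i + \lambda s$ with $s\in S$. Joint convexity gives
\[
\hels(p_{\theta_i}, p_x) \le (1-\lambda)\hels(p_{\theta_i},p_{\theta_i}) + \lambda \hels(p_{\theta_i},p_s) = \lambda\, \hels(p_{\theta_i},p_s) \le \lambda (2\eta)^2 .
\]
The last inequality uses the triangle inequality through $p^\star$. Hence $d_{\hel}(\theta_i,x)\le 2\eta\sqrt\lambda = \eta/4$.

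\textbf{Step 3 (disjointness and counting).} Suppose $x\in T_i\cap T_j$ with $i\ne j$. Then
\[
d_{\hel}(\theta_i,\theta_j)\le \eta/4+\eta/4=\eta/2,
\]
which contradicts the separation. So the $T_i$ are pairwise disjoint subsets of $S$. Comparing volumes gives
\[
N\lambda^k\vol(S)\le \vol(S), \qquad\text{so}\qquad N\le 64^{k}\le 64^{\M-1}.
\]
Taking suprema over $\eta\ge\eps$ and $p^\star$ yields the theorem.

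\textbf{Main obstacle.} The one point I expect to need care is that $d_{\hel}$ is only a pseudo-metric on $\theta$ and is not comparable to any norm on the simplex. The argument sidesteps this by using only two facts: the bound $\hels(p_{\theta_i},p_{(1-\lambda)\theta_i+\lambda s})\le\lambda\,\hels(p_{\theta_i},p_s)$ from convexity, and volumes of convex sets in $\theta$-space. No comparison between $d_{\hel}$ and Euclidean distance is needed. The remaining technicalities are the degenerate dimension and the boundary case of the separation, and both are handled as indicated above.
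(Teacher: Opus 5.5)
Your proposal is correct and follows the paper's proof essentially step for step: the same convex sublevel set in $\theta$-space, the same shrinking inequality $\hels(p_{\theta_i},p_{(1-\lambda)\theta_i+\lambda s})\le\lambda\,\hels(p_{\theta_i},p_s)$, homothetic copies with $\lambda=1/64$ that are disjoint by the triangle inequality, and a volume comparison. The only difference is cosmetic. You measure volume in the affine hull of $S$. The paper instead reduces to $\eta\le 1$ and shows $S$ is full-dimensional because it contains $(1-\eta^2)\theta_0+\eta^2\Delta_{M-1}$. Your version sidesteps both that reduction and the full-dimensionality check.
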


\begin{proof}
Fix $p^\star\in\conv(\mathcal P)$ and $\eta\ge\varepsilon$, and choose $\theta_0\in\Delta_{\M-1}$ such that $p^\star=p_{\theta_0}$. Set
\[
\mathcal{B} \triangleq \{\theta\in\Delta_{\M-1}: d_{\hel}(\theta,\theta_0)\le \eta\}.
\]
Then
\[
B_{\hel}(p^\star,\eta)\cap\conv(\mathcal P)=\{p_\theta:\theta\in \mathcal{B}\},
\]
and moreover, for $A\subseteq\Delta_{\M-1}$ and $\delta>0$, let $N_{d_{\hel}}(A,\delta)$ denote the covering number of $A$ in the pseudo-metric $d_{\hel}$ at scale $\delta$. Then
\begin{equation}\label{eq:cover-transfer}
N_{\hel}\big(B_{\hel}(p^\star,\eta)\cap\conv(\mathcal P),\eta/2\big)\le N_{d_{\hel}}(\mathcal{B},\eta/2).
\end{equation}
Since $d_{\hel}\le 1$, if $\eta>1$ then $\mathcal{B}=\Delta_{\M-1}$ and by monotonicity in the radius $N_{d_{\hel}}(\mathcal{B},\eta/2)\le N_{d_{\hel}}(\Delta_{\M-1},1/2)$, so it suffices to prove the bound below for $0<\eta\le 1$.

Assume $0<\eta\le 1$. By definition, $\mathcal{B}=\{\theta\in\Delta_{\M-1}:\hels(p_{\theta_0},p_\theta)\le \eta^2\}$; the map $\theta\mapsto p_\theta$ is affine and $q\mapsto \hels(p_{\theta_0},q)$ is continuous and convex, so $\mathcal{B}$ is convex and compact. Moreover, for any $\theta,\theta'\in\Delta_{\M-1}$ and any $t\in[0,1]$, convexity of $\hels(p_\theta,\cdot)$ gives
\[
\hels\left(p_\theta,p_{(1-t)\theta+t\theta'}\right)
=\hels\left(p_\theta,(1-t)p_\theta+tp_{\theta'}\right)\le t\hels(p_\theta,p_{\theta'}),
\]
and therefore
\begin{equation}\label{eq:convex-shrink}
d_{\hel}\big(\theta,(1-t)\theta+t\theta'\big)\le \sqrt{t}\,d_{\hel}(\theta,\theta').
\end{equation}
Using $d_{\hel}\le 1$ and \eqref{eq:convex-shrink} with $\theta=\theta_0$ shows that $(1-\eta^2)\theta_0+\eta^2\Delta_{\M-1}\subseteq \mathcal{B}$, hence (with $\vol_{\M-1}$ the Lebesgue measure on the affine hull $\{\theta\in\R^M:\sum_{j=1}^M\theta_j=1\}$) one has
\[
\vol_{\M-1}(\mathcal{B})\ge \vol_{\M-1}\big((1-\eta^2)\theta_0+\eta^2\Delta_{\M-1}\big)=\eta^{2(M-1)}\vol_{\M-1}(\Delta_{\M-1})>0.
\]

Let $\{\theta_1,\dots,\theta_N\}\subset \mathcal{B}$ be a maximal $(\eta/2)$-separated set in $(\mathcal{B},d_{\hel})$. By the standard property we have $N_{d_{\hel}}(\mathcal{B},\eta/2)\le N$. Fix $\lambda\triangleq 1/64$ and define $A_i\triangleq (1-\lambda)\theta_i+\lambda \mathcal{B}$ for $i=1,\dots,N$. Since $\mathcal{B}$ is convex and $\theta_i\in \mathcal{B}$, we have $A_i\subseteq \mathcal{B}$. If $\theta\in A_i$ and $\theta'\in A_j$ with $i\neq j$, write $\theta=(1-\lambda)\theta_i+\lambda\tilde\theta$ and $\theta'=(1-\lambda)\theta_j+\lambda\tilde\theta'$ for some $\tilde\theta,\tilde\theta'\in \mathcal{B}$; then \eqref{eq:convex-shrink} with $t=\lambda$ and the triangle inequality yield
\[
d_{\hel}(\theta_i,\theta)\le \sqrt{\lambda}\,d_{\hel}(\theta_i,\tilde\theta)
\le \sqrt{\lambda}\big(d_{\hel}(\theta_i,\theta_0)+d_{\hel}(\theta_0,\tilde\theta)\big)\le 2\eta\sqrt{\lambda},
\]
and similarly $d_{\hel}(\theta_j,\theta')\le 2\eta\sqrt{\lambda}$. Therefore,
\[
d_{\hel}(\theta,\theta')\ge d_{\hel}(\theta_i,\theta_j)-d_{\hel}(\theta_i,\theta)-d_{\hel}(\theta_j,\theta')
> \frac{\eta}{2}-4\eta\sqrt{\lambda}=0,
\]
since $\sqrt{\lambda}=1/8$; hence $A_i\cap A_j=\emptyset$. Observe that $\vol_{\M-1}(A_i)=\lambda^{\M-1}\vol_{\M-1}(\mathcal{B})$. As the sets $A_1,\dots,A_N$ are pairwise disjoint and contained in $\mathcal{B}$,
\[
\vol_{\M-1}(\mathcal{B})\ge \sum_{i=1}^N \vol_{\M-1}(A_i)=N\,\lambda^{\M-1}\vol_{\M-1}(\mathcal{B}),
\]
and since $\vol_{\M-1}(\mathcal{B})>0$ we conclude $N\le \lambda^{-(M-1)}=64^{\M-1}$. Thus $N_{d_{\hel}}(\mathcal{B},\eta/2)\le 64^{\M-1}$, and combining with \eqref{eq:cover-transfer} gives
\[
N_{\hel}\big(B_{\hel}(p^\star,\eta)\cap\conv(\mathcal P),\eta/2\big)\le 64^{\M-1}
\quad\text{for all }p^\star\in\conv(\mathcal P),\ \eta\ge\varepsilon.
\]
Taking the suprema in \eqref{eq:localentropy} yields $\nloc(\conv(\mathcal P),\varepsilon)\le 64^{\M-1}$.
\end{proof}

\section{Ratio covers of convex sets}
\label{sec:domin-subs-conv}
After approximating the target distribution in Hellinger distance (as we did above, for instance in
the special case of finite mixtures),
a central question in this work is how to convert this control into a \emph{small} set of candidate distributions such that at least one is close to $p^\star$ in KL divergence. We recall from Lemma~\ref{lem:kltohellinger} that the KL divergence $\kll{p}{q}$ can be bounded in terms of the Hellinger distance together with a factor depending on the maximal density ratio $\sup_{x\in\mathcal X}\frac{p(x)}{q(x)}$. In particular, if $p$ and $q$ lie in the same squared Hellinger ball of radius $\eps$ and the density ratio is bounded by a constant, then $\kll{p}{q}\le C\,\eps$ for some constant $C>0$ depending only on that ratio bound. This motivates the general notion of a \textit{ratio cover}, which we will later apply to Hellinger balls to obtain the KL covers needed in our applications.

\begin{definition}
  \label{def:ratio-cover}
  Let $K$ be a compact subset of $\R_+^d$ and $\alpha \geq 1$.
  We say that a subset $A \subset K$ is an \emph{$\alpha$-ratio cover} of $K$ if for every $\theta = (\theta_1, \dots, \theta_d) \in K$, there exists an element $\phi = (\phi_1, \dots, \phi_d) \in A$ such that $\theta_j \leq \alpha \, \phi_j$ for every $j=1, \dots, d$.
\end{definition}

Our goal is to show the existence of $C_1$-ratio covers of size $(C_2)^d$ for convex sets $K$, with constants $C_1,C_2\ge 1$. In our proof of Theorem~\ref{thm:convexaggregation} for mixture models, we will ultimately pay a term proportional to $\log(|A|)/n$ for a ratio cover of size $|A|$: hence even a ratio cover of size $|A|=(\log d)^d$ would yield a suboptimal upper bound featuring an additional $\log \log d$ factor, and similarly any super-constant ratio $\alpha$ would yield suboptimal results (both issues are also important for our other applications). With this motivation, we state one of our main results.

\begin{theorem}
\label{thm:ratio-cover}
For every $d \geq 1$ and every convex and compact set
$K \subset \R_+^d$, there exists a subset $A \subset K$
with at most $2^{8d}$ elements that is a $32$-ratio cover of $K$.
\end{theorem}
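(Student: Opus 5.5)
The plan is to normalize $K$ by a diagonal scaling, so that it sits between a cube and a simplex-like region, and then to cover it by dyadic "level patterns". Two preliminary reductions come first. Let $S \subset [d]$ be the union of the supports of points of $K$. Averaging finitely many points of $K$ (one for each $j \in S$ with $\theta_j > 0$) produces, by convexity, a point of $K$ whose support is exactly $S$. Coordinates outside $S$ vanish identically on $K$ and are dominated trivially, so we may assume $S = [d]$. Second, the notion of an $\alpha$-ratio cover is invariant under coordinatewise positive scalings $\theta \mapsto (c_j \theta_j)_j$, so we are free to rescale.

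\textbf{Normalization.} The function $\theta \mapsto \sum_j \log \theta_j$ is strictly concave and upper semicontinuous on the compact convex set $K$. It is finite at the full-support point, so it attains its maximum at a unique $u \in K$ with all $u_j > 0$. The first-order optimality condition for maximizing a concave function over a convex set says the directional derivative at $u$ toward any $\theta \in K$ is nonpositive, namely $\sum_j (\theta_j - u_j)/u_j \le 0$. This gives
\[
\sum_{j=1}^d \frac{\theta_j}{u_j} \le d \qquad \text{for all } \theta \in K .
\]
After rescaling so that $u = (1,\dots,1)$, we have $\mathbf 1 \in K$ and $\sum_j \theta_j \le d$ on $K$.

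\textbf{Dyadic patterns.} For $\theta \in K$, define the integer vector $k(\theta) \in \N^d$ by $k_j = \max\{0, \lceil \log_2 \theta_j \rceil\}$. If $k_j \ge 1$ then $2^{k_j} < 2\theta_j$, and if $k_j = 0$ then $2^{k_j} = 1$. Hence
\[
\sum_j k_j \le \sum_j 2^{k_j} \le 2\sum_j \theta_j + d \le 3d .
\]
The number of possible patterns is therefore at most $\binom{4d}{d} \le 2^{4d}$, comfortably below $2^{8d}$. For each realized pattern $k$, pick one representative $\theta^{(k)} \in K$ with $k(\theta^{(k)}) = k$, and set
\[
\phi^{(k)} = \tfrac12\bigl(\theta^{(k)} + \mathbf 1\bigr),
\]
which lies in $K$ by convexity since $\mathbf 1 = u \in K$. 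Let $A$ be the set of all such $\phi^{(k)}$.

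\textbf{Verification.} Take any $\theta \in K$ with pattern $k$. If $k_j = 0$, then $\theta_j \le 1 \le 2\phi^{(k)}_j$. If $k_j \ge 1$, then $\theta^{(k)}_j > 2^{k_j - 1}$, so $\phi^{(k)}_j \ge 2^{k_j-2}$ and $\theta_j \le 2^{k_j} \le 4\phi^{(k)}_j$. Thus $A$ is a $4$-ratio cover, which is better than the $32$ claimed.

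I expect the main obstacle to be the normalization step: justifying rigorously the existence of a full-support maximizer and the first-order inequality $\sum_j \theta_j/u_j \le d$. This is the log-barrier, or Lozanovskii-type, factorization. To establish it, I will differentiate $t \mapsto \sum_j \log\bigl(u_j + t(\theta_j - u_j)\bigr)$ at $t = 0^+$. Once this is in place, the rest is pure counting.
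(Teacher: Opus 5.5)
Your proof is correct, and it takes a genuinely different and much shorter route than the paper's.

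\textbf{What the paper does.} The constructive proof only has the average $\phi^{(1)}$ of coordinatewise maximizers. This certifies $\theta_j \le d\,\phi^{(1)}_j$ for each $j$ separately. Dyadic discretization against the coordinate maxima then yields up to $(\log_2 d+1)^d$ profiles. To remove the $\log d$, the paper needs three further devices: iterated averaging with a Markov-type half-cover argument, an induction on dimension for the at most $d/2$ uncovered coordinates, and the extra $\phi^{(4)}$ step to keep the constant from degrading. It ends with ratio $32$ and $2^{8d}$ points. Its alternative $\varepsilon$-net proof is nonconstructive and assumes nonempty interior.

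\textbf{What you do differently.} You center $K$ at the maximizer $u$ of $\prod_j \theta_j$. Its first-order condition gives the single budget inequality $\sum_j \theta_j/u_j \le d$, rather than $d$ separate coordinatewise bounds. This is exactly what stops many coordinates from being large at once. After rescaling, the patterns satisfy $\sum_j 2^{k_j} \le 3d$, so there are at most $\binom{4d}{d}$ of them. One round of discretization followed by averaging with $\mathbf 1 = u \in K$ then finishes the job, with no induction.

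\textbf{What each route buys.} Yours gives a stronger conclusion: a $4$-ratio cover with at most $\binom{4d}{d}\le 2^{4d}$ points. Using $k \le 2^k - 1$ you even get $\sum_j k_j \le 2d$. It also needs no interior assumption and has a canonical center. The paper's route, in exchange, uses only linear maximization, averaging and counting, with no analytic optimality condition.

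\textbf{The step you flagged.} It goes through exactly as you sketched. For $t\in[0,1)$, each coordinate of $(1-t)u+t\theta$ is at least $(1-t)u_j>0$. So $g(t)=\sum_j\log\bigl((1-t)u_j+t\theta_j\bigr)$ is differentiable on $[0,1)$ with $g\le g(0)$, whence $g'(0^+)=\sum_j(\theta_j-u_j)/u_j\le 0$. Existence of $u$ is cleanest via continuity of $\prod_j\theta_j$ on the compact set $K$, which is positive at your full-support point.

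\textbf{Minor wording.} Uniqueness of $u$ is not needed. Also, the rescaled $K$ need not contain the cube $[0,1]^d$, but your verification only uses $\mathbf 1\in K$.
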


In the remainder of this section, we first briefly comment on the theorem statement, and then we discuss an idea of the proof of Theorem~\ref{thm:ratio-cover}. In Subsection~\ref{subsection:ratio-proof}, we prove Theorem~\ref{thm:ratio-cover}.

First, the constraint $A\subset K$ is essential: without it, there is a trivial ``cover'' obtained by choosing the single vector
$\phi\in\R_+^d$ with coordinates $\phi_j=\max_{\theta\in K}\theta_j$, which need not belong to $K$ and is not useful for our applications. Second, the definition must be one-sided. Indeed, if one required the two-sided condition $\phi_j/\alpha \le \theta_j \le \alpha\phi_j$, then even in dimension $d=1$ the set $K=[1,L]$ with $L\ge 2$ would require at least $\log_\alpha L$ points. Third, convexity is required: for $d=2$, the nonconvex set
\[
K=\set{(x,1/x):\ 1/L\le x\le L}
\]
also requires at least $\log_\alpha L$ points for any fixed $\alpha$.
(Of course, one could drop the convexity assumption on $K$ but then relax the requirement to $A \subset \conv (K)$.)
Finally, even for convex $K$, an exponential dependence on $d$ is sometimes unavoidable for constant $\alpha$.
A simple illustration is $\Delta_1=\{(t,1-t):t\in[0,1]\}\subset\R_+^2$: if $\alpha = 1.9$, no single point $\phi=(a,1-a)\in\Delta_1$ can $\alpha$-ratio cover both extreme points $(1,0)$ and $(0,1)$. Hence any $1.9$-ratio cover of $\Delta_1$ must have size at least $2$. Consequently, if $d$ is even, a $1.9$-ratio cover for the product set $\prod_{i=1}^{d/2}\Delta_1\subset\R_+^d$ must have size at least $2^{d/2}$: each block $\Delta_1$ contributes an independent binary choice, so one needs a distinct cover point for each of the $2^{d/2}$ combinations.

We provide two different approaches to the proof of \Cref{thm:ratio-cover}. The first is somewhat longer, but fully constructive and deterministic, based on first principles and exploiting the convexity of the set. The second proof, presented in \Cref{subsection:epsilonnetproof}, is shorter and relies on a reduction to $\varepsilon$-net arguments presented in \cite{diakonikolas2010small} together with the volumetric argument used in the proof of \Cref{thm:local-entropy-mixtures}.

\paragraph{Proof roadmap for \Cref{thm:ratio-cover}.}
We outline the main ideas of the proof. We begin with two warm-ups that highlight the basic geometry behind ratio covers, and then explain how the full argument combines discretization, averaging, and induction to obtain a $C_1$-ratio cover of size $C_2^d$.

\textbf{Warm-up 1: a $d$-ratio cover with a single point.}
For each coordinate $j\in[d]$, let $m_j=\max_{\theta\in K}\theta_j$ (the maximum exists since $K$ is compact), and pick $\overline{\theta}^{(j)}\in K$ such that $\overline{\theta}^{(j)}_j=m_j$.
Define
\[
\phi^{(1)} \triangleq \frac{1}{d}\sum_{j=1}^d \overline{\theta}^{(j)} \in K,
\]
where $\phi^{(1)}\in K$ by convexity.
Then for any $\theta\in K$ and any $i\in[d]$,
\[
\theta_i \le m_i = \overline{\theta}^{(i)}_i
\le \sum_{j=1}^d \overline{\theta}^{(j)}_i
= d\,\phi^{(1)}_i.
\]
Thus $\{\phi^{(1)}\}$ is a $d$-ratio cover of $K$.

\textbf{Warm-up 2: a $4$-ratio cover with $|A|\le (\lceil \log_2 d\rceil+1)^d$.}
Warm-up~1 is too coarse since the ratio factor grows with $d$.
To reduce the ratio, we discretize each coordinate of $\theta$ into dyadic bins relative to $\phi^{(1)}$.
Fix $\theta\in K$. If $\theta_i\le \phi^{(1)}_i$, then coordinate $i$ is already $1$-covered by $\phi^{(1)}$.
Otherwise, $\theta_i>\phi^{(1)}_i$, and the bound from Warm-up~1 implies $\theta_i\le d\,\phi^{(1)}_i$.
(If $\phi^{(1)}_i=0$, then necessarily $m_i=0$ and hence $\theta_i=0$ for all $\theta\in K$, so only the first case can occur.)
Therefore each coordinate $i$ must satisfy either
\[
\theta_i \le \phi^{(1)}_i,
\quad\text{or}\quad
\theta_i \in (2^j\phi^{(1)}_i,\,2^{j+1}\phi^{(1)}_i]
\ \text{for some }j\in\{0,\dots,\lceil\log_2 d\rceil-1\}.
\]
This yields at most $(\lceil\log_2 d\rceil+1)^d$ possible bin patterns across all coordinates.
Let $A_2\subset K$ contain one representative $\phi^{(2)}$ for each bin pattern that occurs among points of $K$, and define
\[
A \triangleq \frac{\phi^{(1)}+A_2}{2}
= \left\{\frac{\phi^{(1)}+\phi^{(2)}}{2}:\ \phi^{(2)}\in A_2\right\}.
\]
Given $\theta\in K$, pick $\phi^{(2)}\in A_2$ with the same bin pattern.
If $\theta_i\le \phi^{(1)}_i$, then
\[
\theta_i \le \phi^{(1)}_i \le \phi^{(1)}_i+\phi^{(2)}_i
= 2\frac{\phi^{(1)}_i+\phi^{(2)}_i}{2}.
\]
Otherwise, for some $j$ we have $\theta_i\in(2^j\phi^{(1)}_i,2^{j+1}\phi^{(1)}_i]$, and the matching bin pattern implies $\phi^{(2)}_i>2^j\phi^{(1)}_i$, hence
\[
\theta_i \le 2^{j+1}\phi^{(1)}_i \le 2\,\phi^{(2)}_i
\le 4\frac{\phi^{(1)}_i+\phi^{(2)}_i}{2}.
\]
Therefore $A$ is a $4$-ratio cover of $K$, and
$|A|= |A_2|\le (\lceil\log_2 d\rceil+1)^d$.
We note that the idea of discretizing by dyadic scales and selecting one representative per pattern also appears in \cite{papadimitriou2000approximability}.

\textbf{Idea of the full theorem.}
The warm-ups still fall short: we need a constant ratio $C_1$ and cover size $C_2^d$.
After discretizing $K$ as above, the key additional observation is that \emph{averaging} a finite set of discretized representatives typically provides slack on many coordinates: for a large fraction of discretized points, the average already constant-covers at least half of the coordinates.
This enables the following high-level strategy:
\begin{enumerate}
    \item Discretize points of $K$ into finitely many profiles (as in Warm-up~2), and pick one representative per profile.
    \item Iteratively take averages of the remaining representatives; at each step, keep the average and discard all representatives that are already constant-covered by it on at least half of the coordinates.
    \item For the uncovered coordinates (at most half), apply the induction hypothesis to a suitable lower-dimensional projection.
    \item Combine the averaging-based candidates with the inductive covers so that every coordinate is covered with an overall constant ratio.
\end{enumerate}

\subsection{Constructive proof of Theorem~\ref{thm:ratio-cover}}\label{subsection:ratio-proof}
  We proceed by induction on $d \geq 1$.

  \paragraph{Base case $d=1$.}
  This case follows simply. Let $\theta^{(0)} = \argmax_{\theta \in K} \theta$ (which exists by compactness of $K$).
  Then, $A = \{ \theta^{(0)} \} \subset K$ is a $1$-ratio cover of $K$ with one element.

  \paragraph{Induction.}
  Otherwise, let $d \geq 2$; we assume that the property holds for any dimension $d' < d$, and proceed to show it in dimension $d$.

  We will define below four finite subsets $A_1, \dots, A_4 \subset K$ such that 
  \[A \subset (A_1 + \dots + A_4)/4 = \{ (\phi^{(1)}+ \dots + \phi^{(4)})/4 : \phi^{(1)} \in A_1, \dots, \phi^{(4)} \in A_4 \}\]
  is a $32$-ratio cover of $K$.

  \emph{\underline{Definition of $A_1$}.}
  $A_1$ is constructed exactly as in the first warm-up. For each $j=1, \dots, d$, define $m_j = \sup_{\theta = (\theta_1, \dots, \theta_d) \in K} \theta_j$, which is finite and attained by compactness of $K$.
  Then, for each $j=1, \dots, d$, pick $\ol \theta^{(j)} = (\ol \theta_1^{(j)},\dots,\ol \theta_d^{(j)}) \in K$ such that $\ol \theta_{j}^{(j)} = m_j$, and define $\phi^{(1)} = (\ol\theta^{(1)} + \dots + \ol \theta^{(d)})/d \in K$.
  We then simply take $A_1 = \{ \phi^{(1)} \}$.

  For every $\theta = (\theta_j)_{1 \leq j \leq d} \in K$, define 
  \[J_1 (\theta) = \{ 1 \leq j \leq d : \theta_j \leq m_j/d \}.\]
  Then, by definition of $\phi^{(1)}$, for each $j \in J_1 (\theta)$ one has $\theta_j \leq m_j/d \leq \phi_{j}^{(1)}$,
  i.e., the $j$-th coordinate of $\theta$ is $1$-covered by that of $\phi^{(1)}$.

  \emph{\underline{Definition of $A_2$}.}
  We now proceed to cover some of the remaining coordinates.
  For every subset $J \subseteq [d]$, define 
  \[K_1 (J) = \set{\theta \in K : [d] \setminus J_1 (\theta) = J},\]
  the set of vectors in $K$ whose set of ``remaining coordinates'' is $J$.
  We will now define a subset $A_2 (J) \subset K$ that covers at least half of the coordinates for every vector $\theta \in K_1 (J)$.
  
  If $J = \emptyset$ (meaning that there are no coordinates left to cover), then arbitrarily set $A_2 (J) = \set{\phi^{(1)}}$;
  if $K_1 (J) = \emptyset$ (meaning that there are no vectors to cover), then set $A_2 (J) = \emptyset$.
  Now, let $J \neq \emptyset$ such that $K_1 (J) \neq \emptyset$.
  By definition of $K_1 (J)$, for every $\theta \in K_1 (J)$ and $j \in J$, we know
  $m_j/d < \theta_j \leq m_j$.

  We first discretize the vectors $\set{(\theta_j)_{j \in J} : \theta \in K_1 (J)}$ by powers of $2$ of their coordinates.
  Let us formalize this: we define the \emph{profile} of a vector $\theta \in K_1 (J)$ as the vector 
  \[(k_j)_{j \in J} \in \N^{J}\textrm{ such that }m_j 2^{-k_j - 1} < \theta_j \leq m_j 2^{-k_j}\textrm{ for every }j \in J.\]
  Since $m_j/d < \theta_j \leq m_j$, one has $0 \leq k_j \leq \left\lceil \log_2 d \right\rceil - 1 \leq \log_2 d$.
  
  We say that a profile
  is admissible if an element of $K_1 (J)$ has this profile.
  We then define a subset $S \subset K_1 (J)$ containing exactly one representative of each admissible profile.
  Note that the cardinality of $S$ is at most $|S| \leq | \set{0, \dots, \left\lceil \log_2 d \right\rceil - 1}^{J}| \leq (\log_2 d + 1)^d$.

  So far, we have produced a finite set of representatives that represent the (possibly infinite) remaining values of $\theta$. Since each $\theta$ has its coordinates approximated by its representative, our intuition is that producing a ratio-cover for this set of representatives will yield a ratio-cover for all $\theta$.
  In this sense, we have essentially reduced the task of ratio-covering a convex set $K$, to the task of providing a ratio cover for a set with bounded size. At first, this may not seem much easier; particularly because the set is still quite large. However, let us first consider a weaker goal, where we only ratio-cover half of the coordinates:
  \begin{definition}[Half-$c$-cover]
      For $c \geq 1$, we say that a vector $\theta$ is half-$c$-covered by a vector $\mu$ if the inequality $\theta_j \leq c \mu_j$ holds for at least half of the indices $j \in J$.
  \end{definition}

  Even producing a half-$c$-cover for our bounded-size set is not immediately obvious. However, we will argue that \textit{the average of the set $S$} is a half-$4$-cover for at least half of the elements in $S$:

  \begin{claim}
    \label{cla:half-cover-mean}
    Given a non-empty set of indices $J \subseteq [d]$, let $S' \subset \R_+^d$ be a finite set, and define $\mu' = \frac{1}{|S'|} \sum_{\theta' \in S'} \theta'$.
    Then, at least half of the elements of $S'$ are half-$4$-covered by $\mu'$.
  \end{claim}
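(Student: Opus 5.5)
The plan is a double Markov argument. The first application is coordinatewise: for each fixed $j\in J$, only a small fraction of $S'$ can exceed four times the mean in coordinate $j$. The second application turns this into a count over elements of $S'$. Throughout, write $N=|S'|$ and $m=|J|\geq 1$.

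\emph{Step 1 (coordinatewise Markov).} Fix $j\in J$ and consider the nonnegative numbers $(\theta'_j)_{\theta'\in S'}$, whose mean is $\mu'_j$. By Markov's inequality, the number of $\theta'\in S'$ with $\theta'_j>4\mu'_j$ is strictly less than $N/4$. If $\mu'_j=0$, all $\theta'_j=0$, so no element is bad in coordinate $j$ and the bound holds trivially. If $\mu'_j>0$, the count $b_j$ of such elements satisfies $b_j\cdot 4\mu'_j<\sum_{\theta'}\theta'_j=N\mu'_j$ whenever $b_j\ge1$, hence $b_j<N/4$.

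\emph{Step 2 (double counting).} Call a pair $(\theta',j)\in S'\times J$ \emph{bad} if $\theta'_j>4\mu'_j$. Summing Step 1 over $j\in J$ gives fewer than $mN/4$ bad pairs in total. An element $\theta'$ fails to be half-$4$-covered by $\mu'$ exactly when $\theta'_j\le 4\mu'_j$ holds for fewer than half of the indices in $J$. Equivalently, it has more than $m/2$ bad coordinates. Let $F$ be the set of failing elements. Then $|F|\cdot m/2<\#\{\text{bad pairs}\}<mN/4$ whenever $F\neq\emptyset$, so $|F|<N/2$. Therefore at least half of $S'$ is half-$4$-covered, as claimed.

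\emph{Main care point.} The only delicate issue is handling strict versus non-strict inequalities, together with the definition of ``at least half of the indices'' when $m$ is odd. A failing element has strictly more than $m/2$ bad coordinates. Since Step 1 gives strict bounds, we get $|F|<N/2$ outright. The degenerate case $\mu'_j=0$ must also be handled, which Step 1 already does. No earlier results from the paper are needed.
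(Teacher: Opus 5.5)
Your proposal is correct and follows essentially the same route as the paper: a coordinatewise Markov bound on the number of elements of $S'$ with $\theta_j > 4\mu'_j$, followed by double counting over $S' \times J$ to show that the elements with more than $|J|/2$ such coordinates make up at most half of $S'$. Tracking strict inequalities is harmless but unnecessary: the non-strict bounds already give $|B| \le |S'|/2$, which is all the claim requires.
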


  \begin{proof}[Proof of Claim~\ref{cla:half-cover-mean}]
    Denote by $B \subset S'$ the set of $\theta' \in S'$ that are not half-$4$-covered by $\mu'$.
    For each $j \in J$, Markov's inequality implies that $|\set{\theta \in S' : \theta_j > 4 \mu'_j}| \leq |S'|/4$.
    Hence,
    \begin{equation*}
      \frac{|S'|}{4}
      \geq \frac{1}{|J|} \sum_{j \in J} \sum_{\theta \in S'} \ind{\theta_j > 4 \mu'_j}
      = \sum_{\theta \in S'} \frac{\abs{\set{j \in J : \theta_j > 4 \mu_j'}}}{|J|}
      \geq |B| \cdot \frac{|J|/2}{|J|}
      = \frac{|B|}{2}
      \, ,
    \end{equation*}
    so that $|B| \leq |S'|/2$, which concludes the proof.
  \end{proof}
  Our plan is to invoke this claim repeatedly to produce a half-$c$-cover for all of $S$. Concretely, we recursively define elements $\mu^{(k)} \in K$ and subsets $S_k \subset S$ as follows. Let $S_1 = S$. Then, for each $k \geq 1$:
  \begin{itemize}
  \item Define $\mu^{(k)} = \frac{1}{|S_k|} \sum_{\theta \in S_k} \theta \in K$.
  \item Let $S_{k+1}$ be the set of vectors in $S_k$ that are not half-$4$-covered by $\mu^{(k)}$.
  \item If $S_{k+1} = \emptyset$, then stop; otherwise, turn to step $k+1$.
  \end{itemize}
  By Claim~\ref{cla:half-cover-mean}, one has $|S_{k+1}| \leq |S_k|/2$, hence the loop stops after
  \begin{equation*}
    r \leq \left\lfloor \log_2 |S| \right\rfloor + 1 \leq d \log_2 (\log_2 d + 1) + 1
    \leq 2^d
  \end{equation*}
  iterations.
  We define $A_2 (J) = \{ \mu^{(1)}, \dots, \mu^{(r)} \}$, the set of partial means, such that $|A_2(J)| \leq 2^d$ by the above.
  
  By construction of the means $\mu^{(k)}$ and sets $S_k$ above, each $\theta \in S$ is half-$4$-covered by an element of $A_2 (J)$.
  In addition, by definition of $S$, each vector $\theta \in K_1 (J)$ has all its coordinates $j \in J$ within a factor of $2$ of those of an element in $S$. Hence, $\theta$ is also half-$8$-covered by an element $\phi^{(2)} = \phi^{(2)} (\theta) \in A_2 (J)$.
  We denote the coordinates handled by this step by
  \[J_2 (\theta) = \{ j \in J : \theta_j \leq 8 \phi_j^{(2)} \}.\]
  Since this covers at least half of the coordinates for any $\theta$, we know $|J_2 (\theta)| \geq |J|/2$. Thus, the number of remaining unhandled coordinates is at most
  \begin{equation*}
      |[d] \backslash (J_1(\theta) \cup J_2(\theta)) | \le d - |J_1(\theta)| - |[d] \backslash J_1(\theta)|/2 \le d/2.
  \end{equation*}

  In summary, define $A_2 = \bigcup_{J \subseteq [d]} A_2 (J)$, such that $|A_2| \leq \sum_{J \subseteq [d]} 2^d = 2^{2d}$.
  Since $K = \bigcup_{J \subseteq [d]} K_1 (J)$, it follows from the above that for every $\theta \in K$, there exists $\phi^{(2)} = \phi^{(2)} (\theta) \in A_2$ such that all coordinates
  $j \in J_2 (\theta)$ of $\theta$ are $8$-covered by the corresponding ones of $\phi^{(2)}$.

  \emph{\underline{Definition of $A_3$}.}
  For any $\theta \in K$, let $J'(\theta)$ denote the coordinates not handled by $A_1$ and $A_2$:
  \[ J'(\theta) = [d] \backslash (J_1(\theta) \cup J_2(\theta)).\]
  As noted before, the number of unhandled coordinates is at most $d/2$.
  In what follows, we will handle the remaining coordinates $J'(\theta)$ by induction, together with an argument to avoid degrading the ratio factor.
  
  For every subset $J \subset [d]$ with $|J| \leq d/2$, we define $\pi_J(K)$ as the projection of $K$ to the coordinates $J$, meaning
  \[ \pi_J(K) = \{ (\theta_j)_{j \in J} : (\theta_j)_{1 \le j \le d} \in K \}.\]
  
  The set $\pi_J(K)$ is a convex compact subset in dimension $d' \leq d/2 < d$; hence, by the induction hypothesis, it admits a subset with at most $2^{8 d'} \leq 2^{4d}$ elements that is a $32$-ratio cover.
  
  We define the corresponding subset $A_3 (J) \subset K$, with at most $2^{4d}$ elements, where $\pi_J (A_3 (J))$ is a 32-ratio cover of $\pi_J (K)$. For $J = \emptyset$, we set $A_3 (\emptyset) = \{\phi^{(1)}\}$.
  By the ratio cover property, for every $\theta \in K$, there exists $\phi^{(3)} = \phi^{(3)} (\theta) \in A_3 (J'(\theta))$ such that $\theta_j \leq 32 \, \phi_{j}^{(3)}$ for every $j \in J'(\theta)$.
  From here, we define $A_3 = \bigcup_{J \pp |J| \leq d/2} A_3 (J)$, with cardinality $|A_3| \leq \sum_{J \pp |J| \leq d/2} |A_3 (J)| \leq 2^d \times 2^{4 d} = 2^{5 d}$.

  At this point, it is worth noting that achieving a $32$-covering from $\phi^{(3)}$ over the coordinates $j \in J'(\theta)$ is not sufficient for our purposes, since the vector $\phi^{(3)}$ is to be averaged with previous vectors $\phi^{(1)}, \phi^{(2)}$ to cover coordinates $j \in [d] \setminus J'(\theta)$.
  This averaging operation will degrade the constant $32$ in the ratio factor, thus breaking the induction.

  In order to address this issue, we distinguish between two types of coordinates.
  For $\theta \in K$, let $\phi^{(3)} = \phi^{(3)} (\theta) \in A_3 (J'(\theta))$ be as above.
  We define 
  \[J_3 (\theta) = \set{j \in J'(\theta) : \theta_j \leq 8 \,\phi_{j}^{(3)}}\textrm{ and }J_4 (\theta) = J'(\theta) \setminus J_3 (\theta) = [d] \setminus (J_1 (\theta) \cup J_2 (\theta) \cup J_3 (\theta)).\]
  By definition, coordinates $j \in J_3 (\theta)$ are $8$-covered, which provides the necessary slack to maintain a $32$-ratio covering after the averaging operation.
  It thus remains to handle the coordinates in $J_4 (\theta)$.

  \emph{\underline{Definition of $\phi^{(4)}$}.} We will not actually define a set $A_4$ as our earlier definition suggests, but instead will define a function $\phi^{(4)}(\theta)$ as before.
  
  The final idea of the proof is the following: coordinates $j \in J_4 (\theta)$ are $32$-covered but not $8$-covered by $\phi^{(3)} (\theta)$; meaning, the value of $\theta_j$ is known up to a factor of $4$, and can thus be separately encoded. Specifically, for any $\phi^{(3)} \in A_3$ and subset $J \subset [d]$, let
  \[C (\phi^{(3)}, J) = \set{\theta \in K : 8 \phi_{j}^{(3)} < \theta_j \leq 32 \phi_{j}^{(3)},\ \textrm{for all}\ j \in J}.\]
  
  Using this, let $R(\phi^{(3)}, J) \in C (\phi^{(3)}, J)$ be an arbitrary representative chosen for $C (\phi^{(3)}, J)$ (as long as the set is nonempty).   For any $\theta \in K$, let $\phi^{(3)} = \phi^{(3)} (\theta)$ and $J_4 = J_4 (\theta)$ as above. 
  By definition of $J_4 (\theta)$, we know $\theta \in C (\phi^{(3)}, J_4)$; hence, the latter set is nonempty, and we choose $\phi^{(4)}(\theta) = R (\phi^{(3)}, J_4)$.
  This ensures that $\theta_j \leq 32 \phi_{j}^{(3)} \leq 4 \phi_{j}^{(4)}$ for every $j \in J_4$.

  \emph{\underline{Conclusion}.}
  We let
  \begin{equation*}
    A = \bigg\{ \frac{\phi^{(1)} + \phi^{(2)} (\theta) + \phi^{(3)} (\theta) + \phi^{(4)} (\theta)}{4} : \theta \in K \bigg\}
    \, ,
  \end{equation*}
  which is contained in $K$ by convexity.
  In addition, for every $\theta \in K$, letting $\phi = (\phi^{(1)} + \phi^{(2)} (\theta) + \phi^{(3)} (\theta) + \phi^{(4)} (\theta))/4 = (\phi_j)_{1 \leq j \leq d}$ and $J_k = J_k (\theta)$ for $k=1, \dots, 4$, we have by the above that:
  \begin{equation*}
    \theta_j \leq 4 \phi_j \text{ for } j \in J_1 ; \quad
    \theta_j \leq 32 \phi_j \text{ for } j \in J_2; \quad
    \theta_j \leq 32 \phi_j \text{ for } j \in J_3; \quad
    \theta_j \leq 16 \phi_j \text{ for } j \in J_4 .
  \end{equation*}
  Since $[d] = J_1 \cup J_2 \cup J_3 \cup J_4$, we deduce that $A$ is a $32$-ratio cover of $K$.

  It remains to control the cardinality of $A$.
  There are at most $|A_2| \leq 2^{2d}$ possible values of $\phi^{(2)} (\theta) \in A_2$, and at most $|A_3| \leq 2^{5d}$ possible values for $\phi^{(3)} (\theta) \in A_3$; in addition, since $\phi^{(4)} (\theta) = R(\phi^{(3)} (\theta),J)$ for some $J \subset [d]$, there are at most $2^d$ possible values of $\phi^{(4)} (\theta)$ given $\phi^{(3)} (\theta)$.
  Hence, there are at most $2^{2d} \times 2^{5d} \times 2^d \leq 2^{8 d}$ possible values for $(\phi^{(2)} (\theta), \phi^{(3)} (\theta), \phi^{(4)} (\theta))$ for $\theta \in K$, and therefore $|A| \leq 2^{8d}$. \qed

\subsection{Approximate Pareto curves}\label{subsection:apply-pareto}
Consider a setting where a set $K \subset \R_+^d$ represents the attainable objective values in a multi-objective optimization problem.
For instance, one may think of producing $d$ different products: a point $x=(x_1,\dots,x_d)\in K$ indicates that it is possible to simultaneously produce $x_j$ units of product~$j$ for each $j\in[d]$.
Such problems typically involve tradeoffs between objectives, and a common object of interest is the \emph{Pareto curve} (or Pareto frontier), namely the set of points in $K$ that are not dominated coordinatewise by any other point in $K$. In general, the Pareto curve may be infinite.
Papadimitriou and Yannakakis~\cite{papadimitriou2000approximability} showed that one can nevertheless extract a finite \emph{$\eps$-approximate Pareto curve}, meaning a subset $A\subset K$ such that for every $\theta=(\theta_1,\dots,\theta_d)\in K$ there exists $\phi=(\phi_1,\dots,\phi_d)\in A$ with
\[
\theta_j \le (1+\eps)\,\phi_j\quad\text{for all }j\in[d].
\]
In our terminology, this is exactly a $(1+\eps)$-ratio cover of $K$.
Their result does not assume convexity of $K$, but it does assume that each coordinate ranges over a bounded interval.

\begin{theorem}[Rephrased Theorem~1 of Papadimitriou and Yannakakis~\cite{papadimitriou2000approximability}]
\label{thm:py20}
For every set $K \subset [\frac{1}{R},R]^d$, every $0 < \eps \le 1$, and every $R \ge 1$, there exists a subset $A\subset K$ with
\[
|A| \le \max\left\{1, \left(c\,\frac{\log R}{\eps}\right)^{d-1}\right\}
\]
that is an $\eps$-approximate Pareto curve (equivalently, a $(1+\eps)$-ratio cover) for $K$, where $c>0$ is a universal constant.
\end{theorem}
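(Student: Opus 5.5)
The plan is the classical grid argument of Papadimitriou and Yannakakis; it needs no convexity. Set $\alpha = 1+\eps$. If $d=1$, take the single point of $K$ maximizing the coordinate; this is a $1$-ratio cover. If $K$ is not closed, take a point within factor $1+\eps$ of the supremum, which exists since $K\subset[1/R,R]$. Now assume $d\ge 2$. We use a geometric grid only on the first $d-1$ coordinates, with ratio $\beta=\sqrt{1+\eps}$ (or $1+\eps/3$). Each coordinate $x_j\in[1/R,R]$ falls into an interval $(\beta^{k}/R,\beta^{k+1}/R]$ with $0\le k< \lceil \log(R^2)/\log\beta\rceil$. Since $\log\beta\ge \eps/3$ for $\eps\le1$, there are at most $c\log R/\eps$ cells per coordinate. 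The case $R=1$ is handled separately: then $K$ is a single point and $|A|\le 1$. This gives at most $(c\log R/\eps)^{d-1}$ cells for the first $d-1$ coordinates.

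Within each nonempty cell $Q$ (a set of points of $K$ sharing the same bin pattern in coordinates $1,\dots,d-1$), choose $\phi^Q\in Q$ whose last coordinate is within a factor $\beta$ of $\sup_{\theta\in Q}\theta_d$. Such a point exists because this supremum is finite and positive. Put $A=\{\phi^Q\}$, so $|A|$ is at most the number of cells.

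To verify the cover property, take $\theta\in K$ and let $Q$ be its cell. For $j\le d-1$, $\theta_j$ and $\phi^Q_j$ lie in the same bin, so $\theta_j\le\beta\phi^Q_j$. For $j=d$, $\theta_d\le\sup_Q\theta_d\le\beta\phi^Q_d$. Since $\beta\le 1+\eps$, $A$ is a $(1+\eps)$-ratio cover. Equivalently, $A$ is an $\eps$-approximate Pareto curve.

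The only delicate point is bookkeeping. First, the count must have exponent $d-1$, not $d$. This is achieved by not gridding the last coordinate and instead taking the near-maximizer in it. Second, the number of bins per coordinate must be $O(\log R/\eps)$ uniformly for $\eps\le 1$, which uses $\log(1+x)\ge x/2$ for $x\le1$. Third, cell endpoints need care: using half-open intervals and including the point $1/R$ in the first bin handles them. The $\max\{1,\cdot\}$ in the statement absorbs the degenerate cases $R=1$ and $d=1$.
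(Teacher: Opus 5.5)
Your proposal is correct and is essentially the argument the paper has in mind. The paper does not reprove this result; it attributes it to the same geometric-grid, one-representative-per-cell discretization as its second warm-up. Your device of gridding only the first $d-1$ coordinates and (near-)maximizing the last one is exactly what the paper does in the proof of Claim~\ref{claim:amplify-ratio}, and you rightly use a near-maximizer rather than an exact one, since $K$ need not be compact here.

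One small bookkeeping fix is needed. The per-coordinate count $\lceil 2\log R/\log\beta\rceil$ is not at most $c\log R/\eps$ when $\log R/\eps$ is tiny, because it is always at least $1$. It equals $1$ whenever $2\log R/\log\beta\le 1$, and otherwise it is at most $4\log R/\log\beta$. So the $\max\{1,\cdot\}$ absorbs this near-degenerate case as well, not only $R=1$.
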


Their short proof is essentially the same dyadic discretization idea that we used in our second warm-up.
A key feature of \Cref{thm:py20} is its dependence on the range parameter $R$.
If one adds the assumption that $K$ is convex, then the averaging idea from our first warm-up yields a simple way to remove this dependence.

\begin{corollary}[Corollary of \Cref{thm:py20} under convexity]\label{cor:py-convex}
For every convex and compact set $K \subset \R_+^d$ and every $0 < \eps \le 1$, there exists a subset $A\subset K$ with
\[
|A| \le \left(c\,\frac{\log d}{\eps}\right)^{d-1}
\]
that is an $\eps$-approximate Pareto curve for $K$, where $c>0$ is a universal constant.
\end{corollary}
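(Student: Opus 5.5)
The plan is to reduce to \Cref{thm:py20} after a preliminary mixing step. This mixing step uses convexity, as in Warm-up~1, to replace the range $R$ by a quantity depending only on $d$ and $\eps$. As in Warm-up~1, let $m_j=\max_{\theta\in K}\theta_j$ and let $\phi^{(1)}=\frac1d\sum_{j}\ol\theta^{(j)}\in K$, so that $\phi^{(1)}_j\ge m_j/d$. Coordinates with $m_j=0$ vanish on all of $K$ and can be discarded, since they are covered trivially. Rescaling coordinate $j$ by $1/m_j$ preserves convexity and the ratio-cover property, so we may assume $m_j=1$ for all $j$. Fix $\eta=\eps/4$ and consider the convex compact set
\[
K'=(1-\eta)K+\eta\,\phi^{(1)}\subset K .
\]
Every $\theta'\in K'$ satisfies $\eta/d\le\theta'_j\le 1$. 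Hence $K'\subset[\eta/d,1]^d$, which is contained in a box of range $R= d/\eta= 4d/\eps$.

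\textbf{Step 1.} Apply \Cref{thm:py20} to $K'$ with accuracy $\eps/4$. This gives $A'\subset K'\subset K$ that is a $(1+\eps/4)$-ratio cover of $K'$, with $|A'|\le (c\log(4d/\eps)/\eps)^{d-1}$.

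\textbf{Step 2.} Transfer the cover from $K'$ back to $K$. For $\theta\in K$, take $\theta'=(1-\eta)\theta+\eta\phi^{(1)}\in K'$ and choose $\phi\in A'$ with $\theta'_j\le(1+\eps/4)\phi_j$. Then
\[
\theta_j\le\frac{\theta'_j}{1-\eta}\le\frac{1+\eps/4}{1-\eps/4}\,\phi_j\le(1+\eps)\phi_j ,
\]
since $\eps\le1$. Thus $A'$ is a $(1+\eps)$-ratio cover of $K$. Note that this transfer only uses the upper bound $\theta'\ge(1-\eta)\theta$. The lower bound $\theta'\ge\eta\phi^{(1)}$ is what controls the range, and this is exactly where convexity enters.

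\textbf{Step 3 (main obstacle).} It remains to turn $\log(4d/\eps)$ into $\log d$. When $\eps\ge 1/d$ this is immediate, since $\log(4d/\eps)\le\log(4d^2)\le 3\log d$ for $d\ge2$, after adjusting $c$. The case $d=1$ is trivial, since one point suffices. For $\eps<1/d$ the mixing parameter $\eta$ cannot simply be taken larger, because the loss factor $1/(1-\eta)$ must stay within $1+\eps$.

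What I would try first is to avoid discretizing coordinates in which the mixing already provides the required precision. The intended argument is as follows. If $\theta_j\le \eps\,\phi^{(1)}_j/4$, then coordinate $j$ is covered by the $\eta\phi^{(1)}$ part alone, so only the range $[\eps/(4d),1]$ needs a $(1+\eps)$-grid. I would then count grid profiles jointly rather than coordinatewise. Concretely, I would replace the product bound of \Cref{thm:py20} by the Papadimitriou--Yannakakis dominance argument, which takes one representative per cell in $d-1$ coordinates and maximizes the last. I would also use the $1$-ratio cover given by the coordinatewise maxima to bound how many coordinates can sit at the lowest scales simultaneously. If this joint count fails to remove the $\log(1/\eps)$ term, the fallback is the bound $(c\log(d/\eps)/\eps)^{d-1}$ from Steps~1--2. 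That bound already gives the stated estimate throughout the regime $\eps\ge 1/d$, which is the regime relevant for constant-factor applications.
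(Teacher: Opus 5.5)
\textbf{Steps 1--2 are fine, but they prove a weaker bound than the one stated, and Step~3 cannot close the difference.}

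Your Steps~1--2 are correct. Mixing with $\phi^{(1)}$ at weight $\eps/4$ confines $K'$ to $[\eps/(4d),1]^d$, Theorem~\ref{thm:py20} then applies with $R=4d/\eps$, and the transfer back to $K$ costs only $(1+\eps/4)/(1-\eps/4)\le 1+\eps$. This is exactly what the paper's one-sentence justification amounts to (averaging as in Warm-up~1, then Theorem~\ref{thm:py20}); the paper gives no further argument. Claim~\ref{claim:amplify-ratio} with $B=\{\phi^{(1)}\}$ and $C=d$ gives the same $(3+4\log_2(4d/\eps)/\eps)^{d-1}$.

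What this proves is $|A|\le (c\log(d/\eps)/\eps)^{d-1}$, which implies the displayed bound only when $\eps\ge d^{-O(1)}$. For each fixed $d\ge 2$ and $\eps\to 0$, it exceeds $(c\log d/\eps)^{d-1}$ by a factor of order $(\log(1/\eps)/\log d)^{d-1}$. No choice of the universal constant $c$ absorbs this. Step~3 is only a plan, so as a proof of the statement with $\log d$ there is a genuine gap. If the corollary is read with $\log(d/\eps)$, which is all the paper's remark supports, Steps~1--2 are already a complete proof and Step~3 can be dropped.

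\textbf{Why the Step~3 plan fails.} The number of nonempty cells is itself too large. Take $K=\Delta_{d-1}$, so $m_j=1$, $\phi^{(1)}=(1/d,\dots,1/d)$, and the range still to be gridded is $(\eps/(4d),1]$.
\begin{itemize}
\item Every choice of cells for coordinates $1,\dots,d-1$ inside $(\eps/(4d),1/d]$ is realized by a point of $\Delta_{d-1}$: the leftover mass, at least $1/d$, goes to coordinate $d$.
\item That interval contains about $\log(4/\eps)/\eps$ cells of ratio $1+\eps$.
\item Hence any one-representative-per-cell scheme uses at least of order $(\log(1/\eps)/\eps)^{d-1}$ points, however the cells are counted.
\item The constraint you hoped to extract from the coordinatewise maxima is vacuous in this example.
\end{itemize}
What is missing is a construction that does not resolve small coordinates multiplicatively down to scale $\eps/d$.

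\textbf{An alternative that helps only partly.} One option is to drop the mixing altogether:
\begin{itemize}
\item cover the points with $\theta_j\le m_j/d$ for all $j$ by $\phi^{(1)}$;
\item otherwise choose $j$ with $\theta_j>m_j/d$, and split $(m_j/d,m_j]$ into $O(\log d/\eps)$ cells $(\ell,(1+\eps)\ell]$;
\item recurse on the projection of $K\cap\{\theta_j\ge \ell\}$ onto the other $d-1$ coordinates, lifting the cover points back.
\end{itemize}
This removes the $\log(1/\eps)$ factor, but it pays a factor $d$ per level and yields only $(c\,d\log d/\eps)^{d-1}$. Neither bound gives $(c\log d/\eps)^{d-1}$ uniformly; at $\eps=d^{-\sqrt d}$, both exceed it by a factor of about $\sqrt d$ in the base. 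So the stronger form would need a further idea, which the paper's remark does not provide either.
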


By comparison, we now use our ratio cover theorem to obtain an $\eps$-approximate Pareto curve whose size has no dependence on $R$ and whose base does not involve $d$.
The convexity assumption is essential for this stronger statement (see the discussion following \Cref{thm:ratio-cover}), and it is natural in many applications, for example when $K$ is the feasible region of a linear program.

\begin{corollary}[of \Cref{thm:ratio-cover}]
  \label{cor:eps-pareto}
  For every convex and compact set $K \subset \R_+^d$ and every $0 < \eps \le 1$, there exists an $\eps$-approximate Pareto curve $A \subset K$ with
  \[
  |A| \le 2^{8d}\left(3 + \frac{4\log_2(128/\eps)}{\eps}\right)^{d-1}.
  \]
\end{corollary}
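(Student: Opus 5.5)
The plan is to combine \Cref{thm:ratio-cover}, which provides a coarse cover with no range parameter, with a Papadimitriou--Yannakakis style fine discretization carried out \emph{locally} around each coarse cover point. Averaging with that cover point, as in the warm-ups, makes the local range parameter bounded by a constant depending only on $\eps$.

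\textbf{Step 1 (coarse cover).} By \Cref{thm:ratio-cover}, fix a $32$-ratio cover $A_0 \subset K$ with $|A_0| \le 2^{8d}$. For each $\phi \in A_0$, let
\[
K_\phi = \{\theta \in K : \theta_j \le 32\,\phi_j \text{ for all } j \in [d]\}.
\]
This set is convex and compact, and $K = \bigcup_{\phi\in A_0} K_\phi$. It therefore suffices to build, for each $\phi$, an $\eps$-approximate Pareto curve $A_\phi \subset K$ of $K_\phi$ with $|A_\phi| \le (3 + 4\log_2(128/\eps)/\eps)^{d-1}$, and to take $A = \bigcup_\phi A_\phi$. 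Coordinates with $\phi_j = 0$ force $\theta_j = 0$ on $K_\phi$ and are trivially covered, so we may discard them.

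\textbf{Step 2 (removing small values by mixing).} Set $\lambda = \eps/4$ and map each $\theta \in K_\phi$ to $\theta' = (1-\lambda)\theta + \lambda\phi$. This point lies in $K$ by convexity, since $\phi \in K$. It satisfies
\[
\lambda\phi_j \le \theta'_j \le 32\phi_j
\quad\text{and}\quad
\theta_j \le \theta'_j/(1-\lambda).
\]
The image $K'_\phi = (1-\lambda)K_\phi + \lambda\phi$ therefore lives in a box in which every coordinate has range ratio at most $32/\lambda = 128/\eps$. It now suffices to $(1+\eps/4)$-ratio cover $K'_\phi$ by points of $K'_\phi \subset K$, since
\[
\frac{1+\eps/4}{1-\eps/4} \le 1+\eps \quad\text{for } \eps \le 1 .
\]

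\textbf{Step 3 (fine discretization with one free coordinate).} On each remaining coordinate $j$, partition $[\lambda\phi_j, 32\phi_j]$ into geometric bins of ratio $1+\eps/4$. The number of bins is at most
\[
\left\lceil \frac{\log_2(128/\eps)}{\log_2(1+\eps/4)} \right\rceil \le 1 + \frac{4\log_2(128/\eps)}{\eps},
\]
using $\log_2(1+x) \ge x$ for $x\in[0,1]$; the slack up to the constant $3$ absorbs rounding and boundary bins. Following the proof of \Cref{thm:py20}, we bin only the first $d-1$ coordinates. For each occurring profile we select, by compactness, the point of $K'_\phi$ with that profile that maximizes the last coordinate. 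Any $\theta' \in K'_\phi$ is then dominated within factor $1+\eps/4$ on the first $d-1$ coordinates and exactly on the last one. This gives the count $(3 + 4\log_2(128/\eps)/\eps)^{d-1}$, and multiplying by $|A_0|$ gives the stated bound.

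\textbf{Main obstacle.} The delicate point is that the maximizer over a profile class must exist, while profile classes defined by half-open bins need not be closed. I would handle this either by taking closed bins, so that a point may belong to two classes (harmless, since we only need one representative per class covering all its members), or by approximating the supremum by a point that achieves it up to a factor $1+\eps'$ absorbed into the constant slack. Beyond this, the remaining work is only careful bookkeeping of the constants so that the bin count matches $3 + 4\log_2(128/\eps)/\eps$.
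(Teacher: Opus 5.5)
Your proposal is correct and follows essentially the same route as the paper: start from the $32$-ratio cover of \Cref{thm:ratio-cover}, bin the first $d-1$ coordinates geometrically with ratio $1+\eps/4$ relative to each cover point, pick a maximizer of the last coordinate within each closed bin pattern, and absorb the $\eps/4$-mixing with the cover point via $(1+\eps/4)/(1-\eps/4)\le 1+\eps$. The only difference is cosmetic: you mix $\theta$ with $\phi$ before binning, so all values lie in $[\tfrac{\eps}{4}\phi_j, 32\phi_j]$, whereas the paper bins $\theta$ directly with an extra ``small value'' case $\theta_i\le(\eps/4)b_i$ and mixes the selected representative $w$ with $b$ afterwards.
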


The proof of \Cref{cor:eps-pareto} is deferred to \Cref{sec:remainingproofs}.
A substantial literature following Papadimitriou and Yannakakis develops algorithms that construct $\eps$-approximate Pareto curves of near-minimal size (see, e.g., \cite{vassilvitskii2005efficiently,diakonikolas2008succinct,diakonikolas2010small,bazgan2015approximate,daskalakis2016good}).
Our result shows that under convexity, there always exists an $\eps$-approximate Pareto curve of smaller size, and therefore yields improved existential upper bounds for what such approximation algorithms can aim to achieve in this setting.

\subsection[Proof of a version of Theorem~\ref{thm:ratio-cover} via epsilon-nets]%
{Proof of a version of \Cref{thm:ratio-cover} based on the reduction to $\varepsilon$-nets}

\label{subsection:epsilonnetproof}

In this section, we provide an alternative proof of \Cref{thm:ratio-cover} based on $\varepsilon$-net arguments in the spirit of Haussler and Welzl~\cite{haussler1986epsilon}, introduced in the context of Pareto curves by Diakonikolas and Yannakakis~\cite{diakonikolas2010small} for general sets $K$.
We then combine this approach with convexity via a volumetric argument similar to the one used in the proof of \Cref{thm:local-entropy-mixtures}. We state the resulting bound as a separate theorem because it yields a different dependence on $\varepsilon$ in the language of \Cref{cor:eps-pareto}.

\begin{theorem}
\label{thm:secratiotheorem}
For every convex and compact set $K \subset \R_+^d$ with nonempty interior and every $ \varepsilon > 0$, there exists an $\varepsilon$-approximate Pareto curve $A \subset K$ with
\[
|A| \le \left\lceil 8d\left(1 + \frac{1}{\varepsilon}\right)^d \log\left(13\left(1 + \frac{1}{\varepsilon}\right)^d\right)\right\rceil.
\]
\end{theorem}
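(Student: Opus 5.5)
The plan is to reduce the statement to a hitting-set problem and apply the Haussler--Welzl $\varepsilon$-net theorem. The measure will be the uniform (Lebesgue) measure on $K$, which is well defined because $K$ has nonempty interior. For each $\theta\in K$ define the ``dominating region''
\[
D(\theta)=\{\phi\in K:\ \theta_j\le (1+\varepsilon)\phi_j \text{ for all } j\in[d]\}.
\]
A set $A\subset K$ is an $\varepsilon$-approximate Pareto curve exactly when $A$ meets every $D(\theta)$. Each $D(\theta)$ is the intersection of $K$ with an axis-parallel orthant $\{\phi:\phi_j\ge \theta_j/(1+\varepsilon)\ \forall j\}$. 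The range space consisting of orthants of this form (translates of $\R_+^d$) has VC dimension at most $d$.

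By the $\varepsilon$-net theorem, for a range space of VC dimension $d$ and a probability measure $\nu$, a random sample of size $O\big(\frac{d}{\beta}\log\frac{1}{\beta}\big)$ is, with positive probability, a $\beta$-net: it hits every range of $\nu$-measure at least $\beta$. I would use the explicit Haussler--Welzl / Koml\'os--Pach--Woeginger constant of the form $\lceil \frac{8d}{\beta}\log\frac{13}{\beta}\rceil$, since this matches the stated bound with $\beta=(1+1/\varepsilon)^{-d}$. The sample points lie in $K$, so $A\subset K$ automatically.

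It remains to show $\nu(D(\theta))\ge (1+1/\varepsilon)^{-d}=\big(\varepsilon/(1+\varepsilon)\big)^d$ for every $\theta\in K$. This is the volumetric step, borrowed from the proof of Theorem~\ref{thm:local-entropy-mixtures}. Set $\lambda=\varepsilon/(1+\varepsilon)$ and consider the homothetic copy
\[
(1-\lambda)\theta+\lambda K\subset K,
\]
which lies in $K$ by convexity. Any $\phi=(1-\lambda)\theta+\lambda\psi$ with $\psi\in K\subset\R_+^d$ satisfies
\[
\phi_j\ge (1-\lambda)\theta_j=\theta_j/(1+\varepsilon),
\]
so this copy is contained in $D(\theta)$. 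Its volume is $\lambda^d\vol(K)$, which gives $\nu(D(\theta))\ge\lambda^d$ as required. Nonnegativity of $K$ is used exactly here, and positivity of $\vol(K)$ is what lets the measure be normalized.

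Combining the two steps, a $\beta$-net with $\beta=\lambda^d$ exists and has the stated cardinality, and it is an $\varepsilon$-approximate Pareto curve.

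The main obstacle I expect is bookkeeping rather than ideas. First, the VC bound for orthants must be justified: each orthant is determined by $d$ coordinate thresholds, and shattering $d+1$ points is impossible because some point attains no coordinate-wise minimum among the rest. Second, the constant in the $\varepsilon$-net theorem has to be matched precisely, including the ceiling and the $\log(13/\beta)$ form. Third, the orthants should be taken closed so that the ranges are measurable. If the sharp constant is not available as stated, I would redo the standard double-sampling argument to extract it.
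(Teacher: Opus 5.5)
Your proposal is correct and follows the paper's proof essentially step for step. Both use the hitting-set reformulation via the dominating regions, the lower bound $\bigl(\varepsilon/(1+\varepsilon)\bigr)^d$ on their uniform measure from the homothetic copy $\frac{1}{1+\varepsilon}\theta+\frac{\varepsilon}{1+\varepsilon}K\subset K$, and the $\lceil \frac{8d}{\eta}\log\frac{13}{\eta}\rceil$ $\varepsilon$-net bound, which the paper cites from Haussler--Welzl and Blumer et al.; the only difference is that you prove the VC-dimension bound $d$ for orthant ranges directly instead of citing Lemma~4.5 of Diakonikolas--Yannakakis.
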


Note that, in full generality, the dependence on $\varepsilon$ in \Cref{thm:secratiotheorem} differs from that in \Cref{cor:eps-pareto}. For instance, in the regime of fixed $d$ and sufficiently small $\varepsilon$, the bound of \Cref{cor:eps-pareto} is stronger because it has exponent $d-1$ rather than $d$. Despite not having the same dependence on $\varepsilon$ as \Cref{cor:eps-pareto}, the result of \Cref{thm:secratiotheorem} implies a $32$-ratio cover of size at most
\[
\left\lceil 8d\left(\frac{32}{31}\right)^d \log\left(13\left(\frac{32}{31}\right)^d\right)\right\rceil \le 2^{5d},
\]
for all $d \ge 1$.
Since optimization of numerical constants is not our priority, we mostly use the simple integer form of \Cref{thm:ratio-cover}.

We also note that the construction in our first proof of \Cref{cor:eps-pareto} is deterministic and constructive, whereas the proof of \Cref{thm:secratiotheorem}
is nonconstructive and based on a probabilistic argument.

\begin{proof}
The nonempty interior assumption implies $\vol(K)>0$. For each $\theta \in K$, set
\[
R_{\theta}(\varepsilon) = \{\phi \in K: \theta_j\le (1 + \varepsilon)\phi_j\ \textrm{for all}\ j \in [d]\}.
\]
As observed in \cite[Section~4.1]{diakonikolas2010small}, $A$ is an $\varepsilon$-approximate Pareto curve if and only if, for all $\theta \in K$,
\[
R_\theta(\varepsilon) \cap A \neq \emptyset.
\]
Indeed, this means that for any $\theta\in K$ there exists $\phi\in A$ such that $\theta_j\le (1 + \varepsilon)\phi_j$ for all $j \in [d]$.

Now define the collection of sets $\mathcal{R}(\varepsilon) = \{R_{\theta}(\varepsilon): \theta \in K\}$. Following \cite{haussler1986epsilon}, the VC dimension of the \emph{range space} $(K,\mathcal{R}(\varepsilon))$ is the largest integer $v$ such that $\mathcal{R}(\varepsilon)$ shatters some set $X\subset K$ of size $v$, where shattering means that for every subset $Y\subseteq X$ there exists a range $R\in\mathcal{R}(\varepsilon)$ such that $R\cap X = Y$. The following result is proved in \cite{diakonikolas2010small}:
\begin{lemma}[Lemma~4.5 in \cite{diakonikolas2010small}]
\label{lem:diakonikolassmall}
For any $\varepsilon>0$, the VC dimension of the range space $(K,\mathcal{R}(\varepsilon))$ is at most $d$.
\end{lemma}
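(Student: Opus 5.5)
The plan is to show directly that no set of $d+1$ points of $K$ is shattered by $\mathcal{R}(\varepsilon)$. Only the orthant structure of the ranges matters; convexity of $K$ plays no role.

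\textbf{Step 1: rewrite each range as an orthant.} Set $t = \theta/(1+\varepsilon) \in \R_+^d$. Then
\[
R_\theta(\varepsilon) = \{\phi \in K : \phi_j \ge t_j \text{ for all } j \in [d]\},
\]
so every range is the trace on $K$ of a translated upper orthant $t + \R_+^d$. Membership of a point $x \in K$ is therefore decided coordinatewise: $x \in R_\theta(\varepsilon)$ if and only if $x_j \ge t_j$ for every $j$.

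\textbf{Step 2: pick out the useful subsets.} Suppose, for contradiction, that some $X = \{x^1, \dots, x^{d+1}\} \subset K$ of distinct points is shattered. I will use only the $d+1$ subsets $Y_i = X \setminus \{x^i\}$, for $i = 1, \dots, d+1$.

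\textbf{Step 3: each excluded point is a strict coordinatewise minimum.} By shattering, for each $i$ there is a threshold vector $t^{(i)}$ whose orthant contains every $x^k$ with $k \ne i$ but not $x^i$.
\begin{itemize}
\item Since $x^i$ is excluded, there is a coordinate $j(i)$ with $x^i_{j(i)} < t^{(i)}_{j(i)}$.
\item Since every other point is included, $t^{(i)}_{j(i)} \le x^k_{j(i)}$ for all $k \ne i$.
\end{itemize}
Hence $x^i_{j(i)} < x^k_{j(i)}$ for all $k \neq i$: the point $x^i$ is the strict unique minimizer of coordinate $j(i)$ over $X$.

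\textbf{Step 4: pigeonhole.} The map $i \mapsto j(i)$ sends $d+1$ indices into $[d]$, so there are $i \ne i'$ with $j(i) = j(i') = j$. Then $x^i$ and $x^{i'}$ would both be the strict unique minimizer of coordinate $j$ over $X$. This is impossible, since it would force $x^i_j < x^{i'}_j < x^i_j$.

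\textbf{Step 5: conclude.} No set of size $d+1$ is shattered. If a larger set were shattered, any $(d+1)$-subset of it would be shattered too, by restricting the ranges. Hence the VC dimension is at most $d$.

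I do not expect a real obstacle here. The only point needing care is that exclusion from an orthant requires a strict inequality in at least one coordinate, while inclusion gives weak inequalities in all coordinates. Combining the two gives the strict minimum used in Step 4. The assumptions on $\varepsilon$ and on $K$ are irrelevant to the argument.
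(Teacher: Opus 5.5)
Your proof is correct. Each range is the trace on $K$ of a translated upper orthant, and the co-singleton subsets plus pigeonhole show that no $d+1$ points can be shattered; this is the standard orthant argument behind the cited Lemma~4.5 of Diakonikolas--Yannakakis, and the paper itself gives no proof beyond that citation. You also note, as the paper does right after the statement, that neither convexity of $K$ nor the value of $\varepsilon$ is used.
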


Note that \Cref{lem:diakonikolassmall} does not require $K$ to be convex. We now depart from the reduction in \cite{diakonikolas2010small} and exploit convexity. For each $\theta \in K$ set
\[
Q_\theta(\varepsilon) \triangleq \frac{1}{1+\varepsilon}\theta + \frac{\varepsilon}{1+\varepsilon}K.
\]
By convexity of $K$, we have $Q_\theta(\varepsilon) \subset K$ for every $\theta \in K$. Moreover, for every $\theta \in K$ it holds that
\[
Q_\theta(\varepsilon) \subset R_\theta(\varepsilon).
\]
Indeed, any $\phi \in Q_\theta(\varepsilon)$ can be written as $\phi = \frac{1}{1+\varepsilon}\theta + \frac{\varepsilon}{1+\varepsilon}\nu$ with $\nu \in K$, and then for all $j \in [d]$,
\[
\theta_j \le (1 + \varepsilon)\left(\frac{1}{1+\varepsilon}\theta_j + \frac{\varepsilon}{1+\varepsilon}\nu_j\right) = (1 + \varepsilon)\phi_j,
\]
so $\phi \in R_\theta(\varepsilon)$. Let $\mu$ be the uniform probability measure on $K$, which is well-defined since $\vol(K)>0$. Then for any $\theta \in K$, a volumetric argument gives
\[
\mu(R_\theta(\varepsilon)) \ge \mu(Q_\theta(\varepsilon)) = \left(\frac{\varepsilon}{1 + \varepsilon}\right)^d.
\]
Thus, it suffices to construct a set $A$ such that for all $\theta \in K$ one has $A \cap R_\theta(\varepsilon) \neq \emptyset$.
Now the class $\mathcal{R} (\eps)$ has VC dimension at most $d$, and
every range satisfies the uniform lower bound $\mu(R_\theta(\varepsilon)) \ge \eta$, where
$
\eta = \left(\frac{\varepsilon}{1 + \varepsilon}\right)^d.
$
Thus % by the standard $\varepsilon$-net argument
by a standard VC lower bound
(see \cite{haussler1986epsilon} and \cite[Theorem~2.1]{blumer1989learnability}), if we sample
\[
\left\lceil\frac{8d}{\eta}\log\left(\frac{13}{\eta}\right)\right\rceil
\]
points independently according to $\mu$ and let $A$ be the resulting set, then with probability at least $1/2$ we have $A \cap R_\theta(\varepsilon) \neq \emptyset$ for all $\theta \in K$. Plugging in the value of $\eta$ yields the stated bound. Existence follows since this event has positive probability.
\end{proof}

\section{Mixture models (convex aggregation)}\label{sec:mixture-models}
We are ready to state our second main statistical result.
\begin{theorem}
\label{thm:convexaggregation}
    Let $X_{1}, \ldots, X_n$ with $n \ge 2$ be an i.i.d.\ sample from some unknown density $p^\star$ that belongs to the convex hull of a known class of arbitrary densities $\{p_1, \ldots, p_\M\}$, namely $\mathcal{P} = \operatorname{conv}\{p_1, \ldots, p_\M\}$ defined on the space $\X$ with respect to the measure $\mu$ and $p^\star \in \mathcal P$. There is a density estimator $\widehat{p}$ such that, with probability at least $1 - \delta$,
    \[
    \kl\left(p^\star, \widehat{p}\right) \le \frac{870030\,M + 45000\log(M)\log\left(4/\delta\right)}{n}.
    \]
In the regime $n \ge \log(1/\delta)$, provided that $n \ge M \ge 5000$, this bound is optimal in the sense that there are $\M$ distributions such that no estimator can perform better up to absolute constant factors, as shown in \cite[Theorem 4]{mourtada2025estimation}. 
\end{theorem}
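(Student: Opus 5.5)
We follow the three-stage template from the introduction, splitting the sample into three (or two) independent parts. Throughout, set $\eps \asymp (M + \log(1/\delta))/n$ as the target squared Hellinger radius.

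\textbf{Stage 1: a Hellinger-localized estimator.} On the first part of the sample we run a Birg\'e--Le Cam tournament (as in \Cref{sec:blc-local-pointwise}) over $\conv(\mathcal P)$. Its deviation guarantee is driven by the local Hellinger entropy $\nloc(\conv(\mathcal P),\cdot)$, which by \Cref{thm:local-entropy-mixtures} is at most $64^{M-1}$ at every scale. Feeding this into the standard peeling argument yields an estimator $\bar p = p_{\bar\theta}$ with $\bar\theta\in\Delta_{M-1}$ such that, with probability at least $1-\delta/4$,
\[
\hels(p^\star,\bar p)\le r^2\triangleq C\,\frac{M+\log(4/\delta)}{n}.
\]
This is the optimal Hellinger rate \eqref{eq:optestimationinhellinger}. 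On this event $\theta^\star$ lies in the convex compact set
\[
\mathcal B=\set{\theta\in\Delta_{M-1}:d_\hel(\theta,\bar\theta)\le r},
\]
whose Hellinger diameter is at most $2r$.

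\textbf{Stage 2: a KL cover of $\mathcal B$ via ratio covers.} Apply \Cref{thm:ratio-cover} to $K=\mathcal B\subset\R_+^M$ to obtain a $32$-ratio cover $A\subset\mathcal B$ with $|A|\le 2^{8M}$. For $\phi\in A$ define the smoothed candidate
\[
q_\phi=\tfrac12 p_\phi+\tfrac{1}{2|A|}\sum_{\psi\in A}p_\psi.
\]
Each $q_\phi$ lies in $\conv(\mathcal P)$, and any two candidates have density ratio at most $1+|A|$, so $m\le\log(1+2^{8M})\lesssim M$.

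Pick $\phi$ with $\theta^\star\le 32\phi$ coordinatewise. Then $p^\star\le 32\,p_\phi\le 64\,q_\phi$ pointwise, so \Cref{lem:kltohellinger} gives $\kll{p^\star}{q_\phi}\lesssim \hels(p^\star,q_\phi)\log 64$. By convexity of $\hels(p^\star,\cdot)$ and the fact that every point of $\mathcal B$ is within squared distance $\lesssim r^2$ of $p^\star$, we get $\hels(p^\star,q_\phi)\lesssim r^2$.

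\textbf{Stage 3: aggregation.} On the second part of the sample, conditionally on the first, apply \Cref{lem:onlinetobatch} to the finite family $\set{q_\phi:\phi\in A}$. With probability $1-\delta/4$ this gives
\[
\kll{p^\star}{\wh p}\le \kll{p^\star}{q_\phi}+\frac{2\log|A|+C(m+1)\log(4/\delta)}{n}\lesssim \frac{M+M\log(1/\delta)}{n}.
\]
This is \emph{not} the desired bound: the product $m\log(1/\delta)$ scales like $M\log(1/\delta)$ instead of $\log M\log(1/\delta)$. This is the main obstacle.

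\textbf{Fixing the ratio bound (main obstacle).} Uniform mixing over $A$ makes $m$ of order $\log|A|\asymp M$. The remedy is to smooth with a single fixed mixture $\bar q=\frac1M\sum_j p_{\bar\theta^{(j)}}$, built as in Warm-up~1 from points $\bar\theta^{(j)}\in\mathcal B$ maximizing each coordinate, and take candidates
\[
q_\phi=\tfrac12 p_\phi+\tfrac12\bar q .
\]
Since $p_\phi\le M\,\bar q$ coordinatewise for every $\phi\in\mathcal B$, all ratios are bounded by $1+M$, hence $m\le\log(2M)$. The approximation step is unchanged: $p^\star\le 64 q_\phi$ still holds, and $\bar q$ is still a mixture of points of $\mathcal B$, so $\hels(p^\star,q_\phi)\lesssim r^2$ by convexity.

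With this choice the Stage 3 bound becomes
\[
\kll{p^\star}{\wh p}\lesssim \frac{M+\log(M)\log(1/\delta)}{n},
\]
using $\log|A|\lesssim M$ and $r^2\lesssim (M+\log(1/\delta))/n$. A union bound over the good events of Stage 1 and Stage 3, plus careful constant tracking, gives the stated numerical constants.
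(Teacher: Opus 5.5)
Your proposal is correct, and once you replace uniform mixing over the cover by smoothing with the average $\bar q$ of the coordinate-wise maximizers in the localized set, it is exactly the paper's argument. That argument has four steps: Birg\'e--Le Cam localization using the $64^{M-1}$ local entropy bound, a $32$-ratio cover of the localized weight set $\wh\Delta$, candidates $\tfrac12 p_\phi+\tfrac12\bar q$ with log-ratio at most $\log(2M)$ (the paper's vectors $\beta^{(j)}$ play the role of your $\bar\theta^{(j)}$), and \Cref{lem:onlinetobatch} on the second half of the sample.
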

As above, for the sake of presentation we assume that we are given a sample of size $2n$ (we translate back to the original sample size at the end). In general, provided that the sample size is at least $2$ we can split the sample into two equally sized independent parts, while dropping at most one element of the original sample. This will only lead to a larger absolute constant in the final statement. We describe the estimator $\widehat{p}$ of Theorem \ref{thm:convexaggregation} as follows:
\begin{framed}
\begin{itemize}
\item %\jao{change}
  Use the first $n$ observations to run a Birg\'{e}-Le Cam tournament (see \cref{lem:lecam-birge-local}) over $\mathcal{P}$ to find a convex closed set $\wh \Qs_1 \subset \mathcal{P}$ such that, with probability $1 - \delta/2$, it satisfies
\begin{equation}
\label{eq:optestimationinhellinger}
\sup\limits_{p, q \in \wh \Qs_1}\hels(p, q) \le \frac{3528\log(64)\,M + 512\log(4/\delta)}{n}, \quad\textrm{and}\quad p^\star \in \wh \Qs_1.
\end{equation}
\item Let $\Delta$ be the simplex in $\R^M$. Consider the closed convex set $\wh \Delta \subseteq \Delta$ defined as follows:
\[
\wh \Delta = \left\{\theta \in \Delta: \sum_{j = 1}^M \theta_{j}p_j \in \wh \Qs_1\right\},
\]
and construct a $32$-ratio cover of $\wh \Delta$ with cardinality at most $2^{8 M}$, denoted as $\wh \Delta_c$.
\item Choose any $\beta^{(1)}, \ldots, \beta^{(M)} \in \wh \Delta$ such that for all $j \in [M]$, $\beta^{(j)}_j = \sup\{\theta_j: \theta \in \wh \Delta\}$, i.e., each vector $\beta^{(j)}$ maximizes the $j$-th coordinate in $\wh \Delta$. Construct a (random) density class 
\begin{equation}
\label{eq:wprimesetnew}
\wh \Qs_2 = \left\{\frac{1}{2}\sum\limits_{j = 1}^M \theta_j p_j + \frac{1}{2M}\sum\limits_{j = 1}^M\sum\limits_{k = 1}^M\beta^{(j)}_kp_k: \theta \in \wh \Delta_c \right\}.
\end{equation}
\item Using the second half of the sample, run the density estimator of \Cref{lem:onlinetobatch} over $\wh \Qs_2$ to output the final density $\widehat{p}$. 
\end{itemize}
\end{framed}

\paragraph{Proof of Theorem \ref{thm:convexaggregation}.}
Fix $\delta \in (0, 1)$.
Combining the Birg\'{e}-Le Cam tournament result (\cref{lem:lecam-birge-local}) and the local Hellinger entropy bound of \cref{thm:local-entropy-mixtures}, we construct the desired convex set $\wh \Qs_1 \subset \mathcal P$ satisfying, with probability at least $1 - \delta/2$,
\begin{equation}
\label{eq:diametersquaredbound}
\sup\limits_{p, q \in \wh \Qs_1}\hels(p, q) \le \frac{3528\log(64)\,M + 512\log(4/\delta)}{n}, \quad\textrm{and}\quad p^\star \in \wh \Qs_1 .
\end{equation}
From now on we work under this event.

Using the second half of the sample, we output the density $\widehat{p}$ obtained using the procedure of \Cref{lem:onlinetobatch} with respect to $\wh \Qs_2$ defined by \eqref{eq:wprimesetnew}.
Let $\theta \in \wh \Delta$ be such that $p^\star = \sum\limits_{j = 1}^M \theta_j p_j$.
By definition of $\wh \Delta_c$, there exists a vector $\phi \in \wh \Delta_c$ such that
\begin{equation}
  \label{eq:ratiolabel}
  \theta_j \leq 32 \, \phi_j
  \quad \text{for every } j=1, \dots, M.
\end{equation}
We then define
\begin{equation*}
  p^\star_{c}
  = \sum_{j=1}^M \phi_j p_j
  \quad \text{and} \quad
  \widetilde{p} = \frac{1}{2}p^\star_c + \frac{1}{2M}\sum\limits_{j = 1}^M\sum\limits_{k = 1}^M\beta^{(j)}_kp_k.
\end{equation*}
Note that $\widetilde{p} \in \wh \Qs_2$ by \eqref{eq:wprimesetnew}. We aim to use  \Cref{lem:onlinetobatch} to bound $\kll{p^\star}{\widehat{p}}$, so we will first verify its condition on the densities. 
Writing any $p\in \wh \Qs_2$ as $p_\alpha=\frac12\sum_k(\alpha_k+\bar\beta_k)p_k$ with $\bar\beta_k=\tfrac1M\sum_j\beta^{(j)}_k$ and noting $\alpha_k\le \sum_j\beta^{(j)}_k=M\bar\beta_k$, we get
\begin{equation*}
\sup_{p_\alpha, p_{\alpha'} \in \wh \Qs_2} \sup_{x \in \mathcal{X}} \log\left(\frac{p_\alpha(x)}{p_{\alpha'}(x)}\right) \le \log\left(1+\frac{\sum_k \alpha_k p_k(x)}{\sum_k \bar\beta_k p_k(x)} \right)\le \log \left(1+M\right) \le \log(2M).
\end{equation*}

Therefore, by \Cref{lem:onlinetobatch}, and since by Theorem \ref{thm:ratio-cover} it holds that $|\wh \Qs_2| \le 256^M$, we have, with probability at least $1 - \delta/2$,
\begin{equation}\label{eq:conv-agg-form}
\kll{p^\star}{\widehat{p}} \le \kll{p^\star}{\widetilde{p}} + \frac{2M\log(256) + \frac{25}{4}\left(e-2\right)\left(\log(2M)+1\right)\log\left(\frac{4}{\delta}\right)}{n}.
\end{equation}
Finally, by Lemma \ref{lem:kltohellinger} and the convexity of the Hellinger distance squared, and observing that the ratio of $p^\star$ to $\frac{1}{2}p^\star_c + \frac{1}{2M}\sum\limits_{j = 1}^M\sum\limits_{k = 1}^M\beta^{(j)}_kp_k$ is bounded by $64$ by \eqref{eq:ratiolabel}, and, in particular,
$
p^\star(x)\le 32\,p^\star_c(x).
$
Therefore,
\begin{align*}
\kl(p^\star, \widetilde{p}) &\le \frac{2\max\{1, \log(64)\}}{(\sqrt{e}-1)^2}\hels\left(p^\star, \frac{1}{2}p^\star_c + \frac{1}{2M}\sum\limits_{j = 1}^M\sum\limits_{k = 1}^M\beta^{(j)}_kp_k\right)
\\
&\le \frac{2\log(64)\left(3528\log(64)\,M + 512\log(4/\delta)\right)}{(\sqrt{e}-1)^2\,n},
\end{align*}
where in the last line we used that $p^\star \in \wh \Qs_1$ and $\frac{1}{2}p^\star_c + \frac{1}{2M}\sum\limits_{j = 1}^M\sum\limits_{k = 1}^M\beta^{(j)}_kp_k \in \wh \Qs_1$ combined with \eqref{eq:diametersquaredbound}. Plugging this into \eqref{eq:conv-agg-form} and using the union bound, we have with probability at least $1 - \delta$,
\begin{align*}
\kl(p^\star, \widehat{p}) &\le \frac{2M\log(256) + \frac{25}{4}\left(e-2\right)\left(\log(2M)+1\right)\log\left(\frac{4}{\delta}\right)}{n} 
\\
&\qquad+\frac{2\log(64)\left(3528\log(64)\,M + 512\log\left(4/\delta\right)\right)}{(\sqrt{e}-1)^2n}.
\end{align*}
Note that we used a sample of size $2n$, which leads to an increased absolute constant by at most a factor of $3$. 
The claim follows.\qed

\section{Applications of ratio covers}\label{sec:apps}
In this section, we will discuss two more applications of our technique for ratio covering of convex sets developed in Section~\ref{sec:domin-subs-conv}. 
In Subsection~\ref{sec:yangbarron-sharp}, we provide a sharpening (in some cases) of the result of Yang-Barron for obtaining expected error guarantees from KL covers, which will also yield our desired expected error guarantee for mixture models.  
In Subsection~\ref{subsection:apply-kl-cover}, we partially resolve a problem of Tang~\cite{tang2022divergence} about KL covers for the simplex via Theorem~\ref{thm:ratio-cover}.

\subsection{Expected error guarantees: sharpening the Yang--Barron bound}
\label{sec:yangbarron-sharp}

The celebrated result of Yang and Barron~\cite{yang1999information} shows how to turn a finite KL cover into an \emph{expected} KL risk bound via a progressive mixture construction. In this section, we explain how our ratio-cover approach yields a sharper guarantee when the underlying density class is convex and we may construct appropriate ratio covers. We use the following form of the Yang--Barron bound.

\begin{theorem}[Yang--Barron; see the proof of Theorem~32.1 in Section~32.2.1 of~\cite{polyanskiy2025information}]
\label{thm:yang-barron}
Let $X_{1}, \ldots, X_n$ be \iid from a density $p^\star$ on $(\X,\mu)$, and let $q_1,\dots,q_N$ be densities such that
\[
\min_{j\in[N]}\kl(p^\star,q_j)\le \eps.
\]
Define the progressive mixture estimator $\widehat p \triangleq \sum_{j=1}^N w_j q_j$, where
\[
w_j \triangleq \frac{1}{n+1}\sum_{i=1}^{n+1}
\frac{\prod_{t=1}^{i-1} q_j(X_t)}{\sum_{k=1}^N \prod_{t=1}^{i-1} q_k(X_t)}.
\]
Then
\[
\E\big[\kl(p^\star,\widehat p)\big]\le \eps + \frac{\log N}{n+1}.
\]
\end{theorem}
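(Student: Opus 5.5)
The plan is the classical online-to-batch argument through the chain rule for KL divergence. I would view the estimator as a Cesàro average of Bayes predictive densities. For $i=1,\dots,n+1$ define the posterior weights $w_j^{(i)} = \prod_{t<i} q_j(X_t)/\sum_k \prod_{t<i} q_k(X_t)$, which use the uniform prior, and set $\hat q_i = \sum_j w_j^{(i)} q_j$. This $\hat q_i$ is the Bayes predictive density for $X_i$ given $X_1,\dots,X_{i-1}$, and $\widehat p = \frac{1}{n+1}\sum_{i=1}^{n+1} \hat q_i$. Since $q\mapsto \kl(p^\star,q)$ is convex, Jensen gives
\[
\kl(p^\star,\widehat p)\le \frac{1}{n+1}\sum_{i=1}^{n+1}\kl(p^\star,\hat q_i).
\]
Here each $\hat q_i$ is a function of the first $i-1$ observations only, and $X_{n+1}$ is a fresh independent copy used only for the bookkeeping.

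Next I would identify the sum of expectations as a joint KL divergence. Let $Q$ be the Bayes mixture $Q(x_{1:n+1}) = \frac1N\sum_j \prod_{t\le n+1} q_j(x_t)$. Its conditionals are exactly $Q(x_i\mid x_{<i})=\hat q_i(x_i)$. By the chain rule for KL,
\[
\sum_{i=1}^{n+1}\E\,\kl(p^\star,\hat q_i) = \kl\big((p^\star)^{\otimes(n+1)}, Q\big).
\]
Equivalently, this is the expectation of $\sum_i \log(p^\star(X_i)/\hat q_i(X_i))$, using the tower property at each step.

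Then I would bound the joint divergence by the log-loss regret against a single expert. The product of the predictive densities telescopes to $Q(X_{1:n+1}) \ge \frac1N \prod_t q_{j^\star}(X_t)$, where $j^\star$ attains $\min_j \kl(p^\star,q_j)\le\eps$. Hence
\[
\kl\big((p^\star)^{\otimes(n+1)},Q\big) \le \log N + \E\sum_{t=1}^{n+1}\log\frac{p^\star(X_t)}{q_{j^\star}(X_t)} = \log N + (n+1)\kl(p^\star,q_{j^\star}).
\]
Dividing by $n+1$ gives the claim.

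No step is a serious obstacle. The only care needed is measure-theoretic:
\begin{itemize}
\item If $\kl(p^\star,q_{j^\star})$ is finite, then $q_{j^\star}>0$ $p^\star$-a.s., so every $\hat q_i>0$ and all the logarithms are well defined.
\item The chain-rule identity needs integrability of the log-ratios. I would handle it by working with the nonnegative KL terms, or by writing the regret bound pathwise and taking expectations of quantities bounded below.
\end{itemize}
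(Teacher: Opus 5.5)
Your proposal is correct and follows essentially the same route as the standard argument the paper cites from Polyanskiy--Wu. That route has three steps: convexity of $\kl(p^\star,\cdot)$ applied to the Ces\`aro average of the Bayes predictive densities; the chain rule identifying the cumulative risk with $\kl\big((p^\star)^{\otimes(n+1)},Q\big)$ for the uniform-prior mixture $Q$; and the pointwise bound $Q(X_{1:n+1})\ge \frac1N\prod_t q_{j^\star}(X_t)$, which gives $\log N+(n+1)\eps$. Your measure-theoretic remarks are enough to make it rigorous, namely positivity of each $\hat q_i$ when $\kl(p^\star,q_{j^\star})<\infty$ and integrability of the negative part of the log-likelihood ratio.
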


Two issues prevent a direct use of \Cref{thm:yang-barron} in our setting. First, constructing KL covers is substantially more delicate than constructing Hellinger covers. Second, even when one has sharp KL covering numbers (e.g., for the simplex), applying Yang--Barron with a \emph{global} cover introduces unnecessary logarithmic factors, since it does not exploit localization.

Our approach is to localize the construction. We first build an estimator that is close to $p^\star$ in Hellinger distance using the Le Cam--Birg\'e tournament (\Cref{lem:lecam-birge-local}). We then construct a finite candidate list that KL-covers the resulting local Hellinger uncertainty region using ratio covers (\Cref{sec:domin-subs-conv}), and finally apply \Cref{thm:yang-barron} to this local list. The remaining technical point is that the Hellinger control is in expectation, so $p^\star$ may fall outside the chosen ball; we address this by showing that the KL quality of the ratio-based cover degrades gracefully with $\hels(p^\star,\widehat p)$ (see \Cref{lemma:kl-from-ball}).

\begin{lemma}\label{lemma:kl-from-ball}
Let $\mathcal P$ be a convex class of densities on $\X$ with respect to $\mu$, fix $p\in\mathcal P$, $\eps>0$, and $\alpha\ge 2$.
Set
\[
\mathcal Q \triangleq \mathcal P \cap B_{\hels}(p,\eps),
\qquad
B_{\hels}(p,\eps)\triangleq\{q\ \text{is a density}:\ \hels(p,q)\le \eps\}.
\]
Assume that $S\subset \mathcal Q$ is finite and has the following property: for every $q\in\mathcal Q$ there exists $\tilde q\in S$ such that
\[
q(x)\le \alpha\,\tilde q(x)\qquad\text{for all }x\in\X.
\]
Then for every $q\in\mathcal P$,
\[
\min_{\tilde q\in S}\kl(q,\tilde q)
\le
30\,\bigl(\eps+\hels(p,q)\bigr)\,
\log\Bigl(\alpha\Bigl(1\vee \frac{\hels(p,q)}{\eps}\Bigr)\Bigr).
\]
\end{lemma}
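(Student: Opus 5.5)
The plan is to reduce everything to Lemma~\ref{lem:kltohellinger}. For a given $q\in\mathcal P$ we exhibit one element $\tilde q\in S$ that satisfies two bounds at once: a pointwise density-ratio bound $q\le \alpha'\tilde q$ and a Hellinger bound $\hels(q,\tilde q)\lesssim \eps+\hels(p,q)$. Lemma~\ref{lem:kltohellinger} then gives $\kl(q,\tilde q)\le \frac{2}{(\sqrt e-1)^2}\hels(q,\tilde q)\max\{1,\log\alpha'\}$. Throughout we use $\alpha'\ge\alpha\ge 2$, so that $\max\{1,\log\alpha'\}\le \log\alpha'/\log 2$, and we note $\frac{2}{(\sqrt e-1)^2}\approx 4.7$. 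Write $h=\hels(p,q)$ and split into two cases according to whether $h\le\eps$.

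\emph{Case $h\le\eps$.} Here $q\in\mathcal Q$, so by assumption there is $\tilde q\in S$ with $q\le\alpha\tilde q$.
\begin{itemize}
\item Since $\tilde q\in\mathcal Q$, the triangle inequality for $\hel$ gives $\hels(q,\tilde q)\le(\hel(q,p)+\hel(p,\tilde q))^2\le 4\eps$.
\item Lemma~\ref{lem:kltohellinger} then yields $\kl(q,\tilde q)\le 4.7\cdot 4\eps\cdot \log\alpha/\log 2$.
\item The constant is $4.7\cdot 4/\log 2\approx 27.5\le 30$, which gives the claim in this case.
\end{itemize}

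\emph{Case $h>\eps$.} Now $q$ lies outside the ball, so we cannot apply the covering property to $q$ directly. Instead we apply it to a shrunk point, in the spirit of \eqref{eq:convex-shrink}.
\begin{itemize}
\item Set $t=\eps/h\in(0,1)$ and $q_t=(1-t)p+tq$. By convexity of $\mathcal P$, $q_t\in\mathcal P$.
\item By convexity of $\hels(p,\cdot)$, $\hels(p,q_t)\le t\,h=\eps$, so $q_t\in\mathcal Q$.
\item Hence there is $\tilde q\in S$ with $q_t\le\alpha\tilde q$. Since $q\le q_t/t$ pointwise, this gives $q\le (\alpha h/\eps)\,\tilde q$, i.e.\ a ratio bound with $\alpha'=\alpha(1\vee h/\eps)$.
\item For the Hellinger part, $\hels(q,\tilde q)\le(\sqrt h+\sqrt\eps)^2\le 2(h+\eps)$.
\item Lemma~\ref{lem:kltohellinger} then gives $\kl(q,\tilde q)\le 4.7\cdot 2(h+\eps)\log(\alpha h/\eps)/\log 2$, with constant roughly $13.6\le 30$.
\end{itemize}
Taking the minimum over $S$ in both cases gives the stated bound.

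The only nonroutine step is obtaining the ratio control when $q$ lies outside the Hellinger ball. The shrinkage $q_t$ toward $p$ handles it: it costs exactly a factor $1/t=h/\eps$ in the ratio, hence only a logarithmic factor in the final bound. The remaining bookkeeping consists of checking $\max\{1,\log\alpha'\}\le\log\alpha'/\log 2$ and tracking the numerical constants, both of which are routine.
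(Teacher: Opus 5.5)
Your proposal is correct and matches the paper's proof. The paper sets $\lambda=1/(1\vee\hels(p,q)/\eps)$, which merges your two cases into one ($\lambda=1$ when $\hels(p,q)\le\eps$). It then shrinks $q$ toward $p$ to land in $\mathcal Q$, transfers the ratio bound via $q_\lambda\ge\lambda q$, bounds $\hels(q,\tilde q)\le 2(\hels(p,q)+\eps)$, and applies Lemma~\ref{lem:kltohellinger} with $\max\{1,\log(\cdot)\}\le\log(\cdot)/\log 2$. Your case split is unnecessary but harmless, and your numerical constants check out.
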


The proof of this result is deferred to \Cref{sec:remainingproofs}. Combined with the tail bounds on the Hellinger estimator, we obtain an expected KL guarantee for convex classes admitting suitable ratio covers.

\begin{proposition}\label{prop:expected-recipe}
Assume that $n$ is even.
Let $\mathcal{P}$ be a convex class of densities and assume that $X_1,\ldots,X_n$ be i.i.d.\ from some $p^\star\in\mathcal{P}$.
Let $\eps_{n/2}>0$ satisfy
\[
\frac{n}{2}\eps_{n/2}^2 \ge \log \nloc(\mathcal{P},\eps_{n/2})\,\lor\,1.
\]
Assume that there exist $\alpha\ge 2$ and an integer $N_S\ge 1$ such that for every $p\in\mathcal{P}$ one can find a finite set
$S(p)\subset \mathcal{P}\cap B_{\hels}(p,\eps_{n/2}^2)$ which is an $\alpha$-ratio cover of $\mathcal{P}\cap B_{\hels}(p,\eps_{n/2}^2)$ (in the sense of \Cref{lemma:kl-from-ball}) and satisfies $|S(p)|\le N_S$.
Then, there exists an estimator $\widehat{p}$ such that
\[
\E\big[\kl(p^\star,\widehat{p})\big]
\le
366609\,\eps_{n/2}^2 \log(\alpha) + \frac{\log(N_S)}{n/2+1}.
\]
\end{proposition}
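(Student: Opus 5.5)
The plan is to split the sample into two halves of size $n/2$. On the first half I run the Birg\'e--Le Cam tournament (\Cref{lem:lecam-birge-local}) over $\mathcal P$ to obtain a density $\bar p\in\mathcal P$. On the second half I run the progressive mixture estimator of \Cref{thm:yang-barron} over the finite list $S(\bar p)$. By hypothesis, $S(\bar p)$ is an $\alpha$-ratio cover of $\mathcal P\cap B_{\hels}(\bar p,\eps_{n/2}^2)$ of size at most $N_S$. Conditionally on the first half, $S(\bar p)$ is a fixed finite list. So \Cref{thm:yang-barron}, applied with $n/2$ observations, gives
\[
\E\big[\kl(p^\star,\widehat p)\,\big|\,X_1,\dots,X_{n/2}\big]\le \min_{\tilde q\in S(\bar p)}\kl(p^\star,\tilde q)+\frac{\log N_S}{n/2+1}.
\]
It then remains to bound the expectation of the first term.

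For this I apply \Cref{lemma:kl-from-ball} with $p=\bar p$, $\eps=\eps_{n/2}^2$, $q=p^\star$ and $S=S(\bar p)$. Writing $h=\hels(\bar p,p^\star)$, this gives
\[
\min_{\tilde q}\kl(p^\star,\tilde q)\le 30(\eps+h)\log\bigl(\alpha(1\vee h/\eps)\bigr).
\]
The key point is that this bound degrades only logarithmically when $p^\star$ lies outside the ball. Using $\log(\alpha u)\le \log\alpha\,(1+\log u)$ for $u\ge1$ and $\alpha\ge2$ (recall $\log 2>0$), the right-hand side is at most $30\log(\alpha)\,(\eps+h)(1+\log(1\vee h/\eps))$. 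Setting $Z=h/\eps$, this is $\lesssim \eps\log(\alpha)\,(1+Z)(1+\log(1\vee Z))$.

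The main obstacle is therefore to show $\E[(1+Z)\log(e(1\vee Z))]\le C$ for an absolute constant $C$. This requires tail bounds for the tournament, not just an expectation bound. I expect \Cref{lem:lecam-birge-local}, together with the local entropy condition $\frac n2\eps_{n/2}^2\ge\log\nloc(\mathcal P,\eps_{n/2})\vee1$, to give a deviation bound of the form $\P(h> c\,\eps_{n/2}^2 + c' t/n)\le e^{-t}$ for every $t>0$. In terms of $Z$ this reads $\P(Z>c+c''s)\le e^{-s}$, using $n\eps_{n/2}^2\ge 2$ to absorb the $1/n$ scale. Integrating this exponential tail against the function $(1+z)\log(e(1\vee z))$ gives a universal constant. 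Tracking the constants from the tournament lemma and from $30$ in \Cref{lemma:kl-from-ball} is what produces the stated numerical constant $366609$.

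A small technical check is also needed: $S(\bar p)$ must be chosen measurably as a function of $\bar p$. The tournament output ranges over a finite or at least countable set of candidates, so this is harmless. Taking total expectation over the first half then combines the two displays and yields the claimed bound.
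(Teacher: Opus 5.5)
Your proposal follows the paper's proof step for step. The paper splits the sample, applies Yang--Barron conditionally over $S(\bar p)$, uses \Cref{lemma:kl-from-ball} with $p=\bar p$ and $\eps=\eps_{n/2}^2$, and integrates the tournament deviation bound of \Cref{lem:lecam-birge-local} at sample size $n/2$, namely $\hels(\bar p,p^\star)\le 882\,\eps_{n/2}^2+256\log(2/x)/n$ with probability $1-x$, using $n\eps_{n/2}^2\ge 2$ exactly as you suggest.

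One correction: the inequality $\log(\alpha u)\le\log\alpha\,(1+\log u)$ is false for $2\le\alpha<e$ and $u>1$ (take $\alpha=2$, $u=e$). Instead, keep $\log\alpha+\log u$ separate through the integration, and only at the end bound the $\alpha$-free term using $1\le\log\alpha/\log 2$. This is how the paper obtains $366609\approx 33000+231240/\log 2$.
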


\begin{proof}
Split the sample into two independent halves.
Let $\widehat{p}_{\hel}$ be the Le Cam--Birg\'e estimator (\Cref{lem:lecam-birge-local}) built from the first $n/2$ observations, and let $\widehat{p}$ be the Yang--Barron mixture (Theorem~\ref{thm:yang-barron}) built from the remaining $n/2$ observations over the candidate set $S(\widehat{p}_{\hel})$.
Conditioning on the first half and applying Theorem~\ref{thm:yang-barron} to the second half yields
\[
\E\big[\kl(p^\star,\widehat{p})\big]
\le
\E\Big[\min_{q\in S(\widehat{p}_{\hel})}\kl(p^\star,q)\Big]
+\frac{\log(N_S)}{n/2+1}.
\]
By Lemma~\ref{lemma:kl-from-ball} with $p=\widehat{p}_{\hel}$ and $\eps=\eps_{n/2}^2$,
\[
\min_{q\in S(\widehat{p}_{\hel})}\kl(p^\star,q)
\le
30\Bigl(\eps_{n/2}^2+\hels(\widehat{p}_{\hel},p^\star)\Bigr)
\log\Bigl(\alpha\Bigl(1\vee \frac{\hels(\widehat{p}_{\hel},p^\star)}{\eps_{n/2}^2}\Bigr)\Bigr).
\]
For $x\in(0,1)$, \Cref{lem:lecam-birge-local} applied with sample size $n/2$ gives, with probability at least $1-x$,
\[
\hels(\widehat{p}_{\hel},p^\star)
\le
882\,\eps_{n/2}^2+\frac{256\log(2/x)}{n}.
\]
Therefore,
\begin{align*}
\E\big[\kl(p^\star,\widehat{p})\big]
&\le
30 \int_0^1
\Bigl(883\,\eps_{n/2}^2+\frac{256\log(2/x)}{n}\Bigr)
\log\left(\alpha\Bigl(882+\frac{256\log(2/x)}{\eps_{n/2}^2\,n}\Bigr)\right)\,dx
+\frac{\log(N_S)}{n/2+1}.
\end{align*}
Since $(n/2)\eps_{n/2}^2\ge 1$, we have $\eps_{n/2}^2\ge 2/n$, and hence
\[
\frac{256\log(2/x)}{\eps_{n/2}^2\,n}\le 128\log(2/x),
\qquad
\frac{256\log(2/x)}{n}\le 128\,\eps_{n/2}^2\log(2/x).
\]
Thus, we have
\[
\E\big[\kl(p^\star,\widehat{p})\big]
\le
30\,\eps_{n/2}^2\int_0^1\Bigl(883+128\log(2/x)\Bigr)\,
\log\left(\alpha\Bigl(882+128\log(2/x)\Bigr)\right)\,dx
+\frac{\log(N_S)}{n/2+1}.
\]
Evaluating the integral yields
\[
\E\big[\kl(p^\star,\widehat{p})\big]
\le
33000\,\eps_{n/2}^2\log(\alpha)+231240\,\eps_{n/2}^2+\frac{\log(N_S)}{n/2+1}.
\]
Since $\alpha\ge 2$, we have $\log(\alpha)\ge \log 2$, so
$231240\,\eps_{n/2}^2 \le \frac{231240}{\log 2}\,\eps_{n/2}^2\log(\alpha)$.
This proves the claim.\qedhere
\end{proof}

When combined with the covering estimates of \Cref{thm:local-entropy-mixtures} and \Cref{thm:ratio-cover}, this establishes the optimal in-expectation rate for mixture density estimation:

\begin{corollary}\label{cor:expected-mix}
In the setup of \Cref{thm:convexaggregation} let $n$ be even.
There exists a density estimator $\widehat{p}$ such that
\[
\E\big[\kl(p^\star,\widehat{p})\big]
\le
11000000 \, \frac{M}{n}.
  % \,M}{n}.
\]
\end{corollary}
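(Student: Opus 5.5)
The plan is to apply \Cref{prop:expected-recipe} with $\mathcal P = \conv\{p_1,\dots,p_\M\}$. We need three ingredients: a choice of $\eps_{n/2}$, a ratio constant $\alpha$, and a cover size $N_S$.

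\emph{The scale $\eps_{n/2}$.} By \Cref{thm:local-entropy-mixtures}, $\log \nloc(\mathcal P,\eps)\le (\M-1)\log 64$ for every $\eps>0$. Hence the choice
\[
\eps_{n/2}^2 = \frac{2\log(64)\,\M}{n}
\]
satisfies $\frac n2 \eps_{n/2}^2 = \log(64)\M \ge \log\nloc(\mathcal P,\eps_{n/2}) \lor 1$, because $\M\ge 2$.

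\emph{The local ratio covers.} Fix $p\in\mathcal P$ and set
\[
\mathcal B(p)=\{\theta\in\Delta_{\M-1}: \hels(p,p_\theta)\le \eps_{n/2}^2\}.
\]
This set is convex and compact, since $\theta\mapsto p_\theta$ is affine and $\hels(p,\cdot)$ is convex and continuous; this is the same argument as in the proof of \Cref{thm:local-entropy-mixtures}. It is also nonempty, because it contains a preimage of $p$. By \Cref{thm:ratio-cover}, $\mathcal B(p)$ admits a $32$-ratio cover $A(p)\subset\mathcal B(p)$ with $|A(p)|\le 2^{8\M}$. We then take
\[
S(p)=\{p_\phi:\phi\in A(p)\}\subset \mathcal P\cap B_{\hels}(p,\eps_{n/2}^2).
\]
To check the ratio property required by \Cref{lemma:kl-from-ball}, let $q\in \mathcal P\cap B_{\hels}(p,\eps_{n/2}^2)$. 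Then $q=p_\theta$ for some $\theta\in\mathcal B(p)$. Choosing $\phi\in A(p)$ with $\theta_j\le 32\phi_j$ for every $j$ gives
\[
q(x)=\sum_j\theta_jp_j(x)\le 32\sum_j\phi_jp_j(x)=32\,p_\phi(x)\quad\text{for all } x\in\X,
\]
using nonnegativity of the $p_j$. So $\alpha=32$ and $N_S=2^{8\M}$.

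\emph{Conclusion.} Plugging these into \Cref{prop:expected-recipe} gives
\[
\E\big[\kl(p^\star,\widehat p)\big]\le 366609\cdot \frac{2\log(64)\M}{n}\log 32 + \frac{8\M\log 2}{n/2+1}.
\]
It remains to check numerically that the right-hand side is at most $11000000\,\M/n$. We have $2\log 64\cdot\log 32\approx 8.32\cdot 3.47\approx 28.8$, and $366609\cdot 28.8\approx 1.056\times 10^7$. The second term is at most $16\log 2\,\M/n\approx 11.1\,\M/n$. The total is about $1.056\times 10^{7}\,\M/n$, which is below the stated constant.

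The only point needing care is that $S(p)$ must sit inside $\mathcal P\cap B_{\hels}(p,\eps^2_{n/2})$ and ratio-cover it pointwise as densities. This is handled by pulling back to the simplex, applying \Cref{thm:ratio-cover} there, and pushing forward through the positive linear map $\theta\mapsto p_\theta$. Non-injectivity of this map is harmless, since we cover all preimages.
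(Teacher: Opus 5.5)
Your proposal is correct and is essentially the paper's own proof. You apply \Cref{prop:expected-recipe} with $\alpha=32$, take $\eps_{n/2}^2=2\M\log(64)/n$ from \Cref{thm:local-entropy-mixtures}, get $N_S=2^{8\M}$ from \Cref{thm:ratio-cover} applied to the pulled-back weight set, and check numerically that the result is about $1.0568\times 10^7\,\M/n$. You also spell out explicitly the pointwise density domination and the $\lor 1$ part of the fixed-point condition, both of which the paper leaves implicit.
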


\begin{proof}
Apply Proposition~\ref{prop:expected-recipe} with $\mathcal{P}=\conv\{p_1,\ldots,p_\M\}$ and $\alpha=32$.
By Theorem~\ref{thm:local-entropy-mixtures}, $\nloc(\mathcal{P},\eps)\le 64^{\M-1}$ for all $\eps>0$, so we may take
\[
\eps_{n/2}^2=\frac{2M\log(64)}{n},
\]
which satisfies $(n/2)\eps_{n/2}^2 \ge \log \nloc(\mathcal{P},\eps_{n/2})$. Fix any $p\in\mathcal{P}$ and write $p=p_v$ for some $v\in\Delta_{\M-1}$, where $p_v=\sum_{j=1}^M v_j p_j$.
The set of mixture weights
\[
K=\Bigl\{u\in\Delta_{\M-1}:\ p_u \in \mathcal{P}\cap B_{\hels}(p,\eps_{n/2}^2)\Bigr\}
\]
is convex and compact. By Theorem~\ref{thm:ratio-cover}, it admits a $32$-ratio cover of size at most $2^{8M}$; mapping these weights back to densities yields a $32$-ratio cover $S(p)$ of $\mathcal{P}\cap B_{\hels}(p,\eps_{n/2}^2)$ with the same size. Hence Proposition~\ref{prop:expected-recipe} applies with $N_S=2^{8M}$. We obtain
\[
\E\big[\kl(p^\star,\widehat{p})\big]
\le
366609\,\eps_{n/2}^2\log(32)+\frac{8M\log 2}{n/2+1}.
\]
Using $1/(n/2+1)\le 2/n$ and $\eps_{n/2}^2=2M\log(64)/n$, the right-hand side is at most
\[
\frac{366609\cdot 2\log(64)\cdot \log(32)}{n}\,M+\frac{16\log 2}{n}\,M
\le \frac{10568316\,M}{n},
\]
which completes the proof.\qedhere
\end{proof}

\subsection{KL covering numbers}\label{subsection:apply-kl-cover}

Constructing covers in Kullback--Leibler divergence is substantially less developed than for metrics such as Hellinger distance. 
Tang's PhD thesis \cite{tang2022divergence} highlights the KL covering number of the simplex as a natural benchmark problem. This benchmark is directly tied to discrete distribution estimation: in this section, $\Delta_{d-1}$ is the convex hull of the $d$ Dirac masses on $[d]$, equivalently the class of mixtures of a finite dictionary of $d$ point masses.

We note that the results of this section are mainly illustrative. We focus on the simplex to mirror the setting studied in \cite{tang2022divergence}, but global KL covering numbers for the simplex are too crude for our statistical applications, where we crucially exploit localization. More broadly, our ratio-cover approach applies to convex hulls of arbitrary collections of densities, and our focus is always on {local} covering numbers.

Since KL is a directed divergence, we use the one-sided notion of covering that matches our risk $\kl(p^\star,\widehat p)$ and is the notion used in the work of Yang and Barron \cite{yang1999information}: for $\eps>0$,
\[
N_{\kl}(\Delta_{d-1},\eps)
\triangleq
\min\Bigl\{N:\ \exists\,q^{(1)},\dots,q^{(N)}\in\Delta_{d-1}\ \text{such that}\ 
\sup_{p\in\Delta_{d-1}}\ \min_{k\in[N]}\ \kll{p}{q^{(k)}}\le \eps\Bigr\}.
\]

\paragraph{Tang's KL covers for the simplex \cite{tang2022divergence}.}
Tang gives a general lower bound (via a reduction to total variation covering numbers), which yields
\begin{equation}\label{eq:kl-cover-lb}
N_{\kl}(\Delta_{d-1},\eps) \ge \left( \frac{1}{8 \eps}\right)^{\frac{d-1}{2}}.
\end{equation}
On the other hand, for any $p\in\Delta_{d-1}$, the uniform distribution $u=(1/d,\dots,1/d)$ satisfies $\kll{p}{u}\le \log d$, so $N_{\kl}(\Delta_{d-1},\eps)=1$ whenever $\eps\ge \log d$.
For $0<\eps<\log d$, Tang's Theorem~2 gives the upper bound
\[
N_{\kl}(\Delta_{d-1},\eps) \le \left( C \frac{\log d}{\eps} \right)^{\frac{d-1}{2}}
\]
for a universal constant $C>0$, using a Hellinger cover together with a uniform lower bound on coordinates to control likelihood ratios.
In the small-$\eps$ regime $0<\eps<\frac{1}{4(d+1)^2}$, Tang's Theorem~3 gives the sharper bound
\[
N_{\kl}(\Delta_{d-1},\eps) \le \left( \frac{C}{\eps} \right)^{\frac{d-1}{2}} \log \frac{1}{\eps},
\]
via a probabilistic construction, namely the proof uses a $\chi^2$-type argument based on a modification of Jeffreys prior. These results motivate the following question: for a fixed accuracy level (say $0<\eps\le 1$), can one match the lower bound
\eqref{eq:kl-cover-lb} up to universal constants, i.e., obtain an upper bound of the form
\[
N_{\kl}(\Delta_{d-1},\eps) \le \left(\frac{C}{\eps}\right)^{\frac{d-1}{2}}
\qquad (0<\eps\le 1)
\]
for some universal constant $C>0$?
We answer this question in the affirmative using ratio covers (see \Cref{prop:kl-cover} below).
We note that Tang also analyzes additional regimes where $\eps$ varies with $d$, including subexponential and polynomial behaviors; for an overview we reproduce their phase diagram in Table~\ref{table:tang}.

{
\begin{table}[htbp]
\scriptsize
\centering
\renewcommand{\arraystretch}{1.7}
\setlength{\tabcolsep}{10pt}
\begin{tabular}{|l|l|c|c|}
\hline
\multicolumn{4}{|c|}{\textbf{ Simplex KL Covering Number Bounds of \cite{tang2022divergence}}}\\
\hline
\textbf{Name} & \textbf{Regime} & \textbf{Lower Bound} & \textbf{Upper Bound}\\
\hline
Hellinger method
& $0<\varepsilon<\log d$
& $\left(\dfrac{c}{\varepsilon}\right)^{\frac{d-1}{2}}$
& $\left(\dfrac{C\log d}{\varepsilon}\right)^{\frac{d-1}{2}}$
\\
\hline
$\chi^2$ method
& $0<\varepsilon<\dfrac{1}{4(d+1)^2}$
& $\left(\dfrac{c}{\varepsilon}\right)^{\frac{d-1}{2}}$
& $\log \left(\dfrac{1}{\varepsilon}\right)
  \left(\dfrac{C}{\varepsilon}\right)^{\frac{d-1}{2}}$
\\
\hline
Subexponential
& $\displaystyle \varepsilon=\frac{1}{r}\log d+\log(r+1),\; 1\le r\le e\log d$
& $\exp\left(\frac{1}{e^{2}} d^{\,1-1/r}\right)$
& $\exp({d^{\,1-1/r} \log d})$
\\
\hline
Polynomial
& $\displaystyle \varepsilon=\log\frac{d}{\gamma},\quad
   \gamma\le\sqrt{\frac{d}{2}}$
& $\exp({\frac{\gamma}{e} + \log \sqrt{\frac{2\pi\gamma}{e}}})$
& $\exp(2\gamma \log d)$
\\
\hline
\end{tabular}
\caption{Covering number bounds for the simplex in \cite{tang2022divergence} for different regimes of $\varepsilon$. Constants $c$ and $C$ may vary for each bound. This table corresponds to their Figure 3.2. Our ratio cover results will improve the regimes affected by the Hellinger method and the $\chi^2$ method.\label{table:tang}}
\end{table}
}

Our next result improves the KL covering number upper bound in the regime where $\eps$ is at most an absolute constant. We also state a general reduction from KL covers of the simplex to local Hellinger covers, combined with ratio covers.

\begin{proposition}\label{prop:kl-cover}
Let $N_{\hels}(\Delta_{d-1},r)$ denote the covering number of the simplex $\Delta_{d-1}$ in squared Hellinger distance at scale $r$.
Then, for every $\eps>0$,
\[
N_{\kl}(\Delta_{d-1},\eps)\le N_{\hels}\left(\Delta_{d-1},\frac{\eps}{70}\right)\cdot 2^{8d}.
\]
In particular, for every $0<\eps\le 1$,
\[
N_{\kl}(\Delta_{d-1},\eps)\le\left(\frac{5040\cdot 2^{32}}{\eps}\right)^{\frac{d-1}{2}}.
\]
\end{proposition}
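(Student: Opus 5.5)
Take a minimal cover of $\Delta_{d-1}$ at scale $r=\eps/70$ in squared Hellinger distance, with centers $p^{(1)},\dots,p^{(N)}$ and $N=N_{\hels}(\Delta_{d-1},\eps/70)$. If centers may lie outside the simplex, first replace them by points of $\Delta_{d-1}$, at the cost of a constant factor in the radius (absorbed into the constant $70$). For each center $p^{(i)}$, set $\mathcal Q_i=\Delta_{d-1}\cap B_{\hels}(p^{(i)},\eps/70)$. This set is convex and compact, since $q\mapsto\hels(p^{(i)},q)$ is convex and continuous. Viewed as a subset of $\R_+^d$ (coordinates are the point masses), Theorem~\ref{thm:ratio-cover} gives a $32$-ratio cover $A_i\subset\mathcal Q_i$ with $|A_i|\le 2^{8d}$. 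Coordinatewise domination $p_j\le 32\,a_j$ is exactly the pointwise density-ratio domination $p(x)\le 32\,a(x)$.

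Since all domination is pointwise, I will take $S=\bigcup_i A_i$ as the KL cover; no smoothing is needed. Given $p\in\Delta_{d-1}$, pick $i$ with $\hels(p,p^{(i)})\le\eps/70$, so that $p\in\mathcal Q_i$. Then pick $a\in A_i$ with $p\le 32a$, which gives $\sup_x\log(p(x)/a(x))\le\log 32$. Both $p$ and $a$ lie in $\mathcal Q_i$, so $\hels(p,a)\le(\hel(p,p^{(i)})+\hel(p^{(i)},a))^2\le 4\eps/70$. Lemma~\ref{lem:kltohellinger} then yields
\[
\kll{p}{a}\le \frac{2\log 32}{(\sqrt e-1)^2}\cdot\frac{4\eps}{70}.
\]
Since $2\log 32/(\sqrt e-1)^2\approx 16.4$, this is at most $\eps$. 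Hence $N_{\kl}(\Delta_{d-1},\eps)\le N\cdot 2^{8d}$, which is the first claim. Lemma~\ref{lemma:kl-from-ball} could be used instead, but the direct route is cleaner.

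For the second claim I need an explicit Hellinger covering bound for the simplex. The map $p\mapsto(\sqrt{p_j})_j$ sends $\Delta_{d-1}$ isometrically, up to the factor $1/\sqrt2$, onto the nonnegative part of the unit sphere in $\R^d$ with Euclidean distance. So $\hels(p,q)\le r$ is equivalent to $\|\sqrt p-\sqrt q\|_2\le\sqrt{2r}$. A volumetric covering of a $(d-1)$-dimensional piece of the sphere at Euclidean scale $\rho=\sqrt{2\eps/70}$ gives roughly $(C/\rho)^{d-1}=(C'/\eps)^{(d-1)/2}$ points. To stay intrinsic, I would instead apply the volumetric packing argument to the spherical cap, or project onto a $(d-1)$-dimensional tangent parametrization, keeping track of constants.

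It remains to absorb the factor $2^{8d}=2^{8}\cdot 2^{8(d-1)}=2^8\cdot(2^{16})^{(d-1)/2}$. This is the main bookkeeping obstacle. When $d\ge2$, the $2^8$ must be merged into the base. Since $\eps\le1$ and $(d-1)/2\ge1/2$, it suffices that $2^8\le(2^{16})^{(d-1)/2}$, i.e. $2^{8}\le 2^{8(d-1)}$, which holds for $d\ge2$. This suggests the stated constant $5040\cdot2^{32}$, where $5040=70\cdot 72$ comes from the spherical covering constant times $70$ and $2^{32}$ absorbs $2^{16}$ squared. The case $d=1$ is trivial, since $N_{\kl}=1$. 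I expect the delicate step to be getting a clean explicit spherical covering constant, checking for instance that $(1+2/\rho)^{d}$-type bounds can be reduced to exponent $d-1$ with the stated constant. If that fails, I would fall back on a cruder but explicit constant, using $\eps\le1$ to absorb one extra factor $1/\rho$.
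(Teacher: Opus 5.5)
Your proposal follows the paper's proof essentially step for step: a squared-Hellinger $\eps/70$-cover of $\Delta_{d-1}$ (its centers are automatically in the simplex, so no radius adjustment is needed), a $32$-ratio cover of each convex compact local ball via Theorem~\ref{thm:ratio-cover}, the triangle inequality giving $\hels\le 4\eps/70$, Lemma~\ref{lem:kltohellinger}, and then the square-root map onto $S^{d-1}_+$ with the absorption $2^{8d}\le (2^{32})^{(d-1)/2}$ for $d\ge 2$. The step you leave open is what the paper cites as the standard shell-volumetric bound $N_{\ell_2}(S^{d-1},t)\le (12/t)^{d-1}$ for $0<t\le 1$, $d\ge 2$: disjoint balls of radius $t/2$ around a maximal $t$-separated subset of $S^{d-1}_+$ lie in the shell $1-t/2\le\|x\|_2\le 1+t/2$, so there are at most $2d(1+2/t)^{d-1}\le(12/t)^{d-1}$ of them, and this gives exactly $5040=70\cdot 12^2/2$; your fallback of absorbing an extra factor $1/\rho$ via $\eps\le 1$ would not work, since that factor is unbounded as $\eps\to 0$.
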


\begin{proof}
If $d = 1$, then the set is the singleton and the proof is immediate. Thus, we assume $d \ge 2$. Fix $\eps>0$ and set $R\triangleq \eps/70$.
Let $\{q^{(1)},\dots,q^{(N)}\}\subset\Delta_{d-1}$ be an $R$-cover of $\Delta_{d-1}$ in squared Hellinger distance, so that
for every $p\in\Delta_{d-1}$ there exists $k\in[N]$ with $\hels(p,q^{(k)})\le R$, and $N=N_{\hels}(\Delta_{d-1},R)$. For each $k\in[N]$, define the (squared-Hellinger) ball intersection
\[
K_k \triangleq \Delta_{d-1}\cap B_{\hels}(q^{(k)},R)
\qquad\text{where}\qquad
B_{\hels}(q,R)\triangleq\{p\in\Delta_{d-1}:\ \hels(p,q)\le R\}.
\]
Applying \Cref{thm:ratio-cover} to a convex and compact set $K_k$, we obtain a $32$-ratio cover $A_k\subset K_k$ with $|A_k|\le 2^{8d}$.
Define $\mathcal A\triangleq \bigcup_{k=1}^N A_k$, so
\[
|\mathcal A|\le N_{\hels}(\Delta_{d-1},R)\cdot 2^{8d}.
\]

Now fix $p\in\Delta_{d-1}$ and pick $k\in[N]$ such that $\hels(p,q^{(k)})\le R$.
Since $p\in K_k$ and $A_k$ is a $32$-ratio cover of $K_k$, there exists $q^\star\in A_k$ such that $p_i\le 32\,q^\star_i$ for all $i\in[d]$.
In particular, whenever $p_i>0$ we have $q^\star_i>0$ and $\log(p_i/q^\star_i)\le \log 32$, hence
$
\max_{i\in[d]}\log\frac{p_i}{q^\star_i}\le \log 32.
$
Moreover, since $q^\star\in A_k\subset K_k\subset B_{\hels}(q^{(k)},R)$, we also have $\hels(q^{(k)},q^\star)\le R$.
Using the triangle inequality for $\hel$,
\[
\hel(p,q^\star)\le \hel\bigl(p,q^{(k)}\bigr)+\hel\bigl(q^{(k)},q^\star\bigr)
\le\sqrt{R}+\sqrt{R}=2\sqrt{R},
\]
and thus $\hels(p,q^\star)=\hel(p,q^\star)^2\le 4R$.
By \Cref{lem:kltohellinger},
\[
\kll{p}{q^\star}
\le\frac{2}{(\sqrt e-1)^2}\hels(p,q^\star)\max\left\{1,\max_{i\in[d]}\log\frac{p_i}{q^\star_i}\right\}
\le\frac{8}{(\sqrt e-1)^2}\max\{1,\log 32\}\,R.
\]
The numerical constant satisfies $\frac{8}{(\sqrt e-1)^2}\max\{1,\log 32\}<70$, so with $R=\eps/70$ we have $\kll{p}{q^\star}\le \eps$.
Since $p\in\Delta_{d-1}$ was arbitrary, $\mathcal A$ is a KL $\eps$-cover of $\Delta_{d-1}$, and therefore
\[
N_{\kl}(\Delta_{d-1},\eps)\le N_{\hels}\left(\Delta_{d-1},\frac{\eps}{70}\right)\cdot 2^{8d}.
\]

It remains to derive the stated $(C/\eps)^{(d-1)/2}$ bound when $0<\eps\le 1$.
Consider the bijection $p\mapsto \sqrt p\triangleq(\sqrt{p_1},\dots,\sqrt{p_d})$ between $\Delta_{d-1}$ and the positive orthant of the unit sphere
$
{S}^{d-1}_+\triangleq \{x\in\R_+^d:\ \|x\|_2=1\}.
$
For $p,q\in\Delta_{d-1}$, we have
$
\hels(p,q) = \frac12\bigl\|\sqrt p-\sqrt q\bigr\|_2^2,
$ 
which implies
\[
N_{\hels}(\Delta_{d-1},r)= N_{\ell_2}\left({S}^{d-1}_+,\sqrt{2r}\right)
\le N_{\ell_2}\left({S}^{d-1},\sqrt{2r}\right),
\]
where $N_{\ell_2}({S}^{d-1},t)$ denotes the $\ell_2$ covering number of the unit sphere at radius $t$.
For $0<t\le 1$, a standard shell-volumetric argument yields
$
N_{\ell_2}({S}^{d-1},t) \le \left(\frac{12}{t}\right)^{d-1}.
$
Fix $r=\eps/70$. For $0<\eps\le 1$, we have $\sqrt{2r}=\sqrt{2\eps/70}\le 1$, hence
\[
N_{\hels}\left(\Delta_{d-1},\frac{\eps}{70}\right)
\le\left(\frac{5040}{\eps}\right)^{\frac{d-1}{2}}.
\]
Combining this with the first part of the proof, we prove the claim.
\end{proof}

\section{Aggregation in the misspecified case}
\label{sec:misspecified}
In this section we discuss aggregation under misspecification, where we do not assume that $p^\star \in \mathcal{P}$. This problem is also of interest in the aggregation literature. The results in this section have the following features:
\begin{itemize}
\item we do not assume that $p^\star \in \mathcal{P}$;
\item the algorithms are simpler, and their analysis relies on less sophisticated arguments;  
\item however, their statistical guarantees feature additional polylogarithmic factors in $n$ and $\M$.
  % at the price of additional polylogarithmic factors in $n$ and $\M$ in the bounds.
\end{itemize}
In particular,
obtaining optimal rates up to universal constant factors when $p^\star \notin \mathcal{P}$ remains an open question.

\begin{proposition}
\label{prop:misspecmodelaggregation}
Let $X_{1}, \ldots, X_n$ be an i.i.d.\ sample from some unknown density $p^\star$ defined on the space $\X$ with respect to the measure $\mu$. Let $\mathcal{P} = \{p_1, \ldots, p_\M\}$ be a set of densities with respect to $\mu$ defined on $\X$. There is a density estimator $\widehat{p}$ such that for any $\delta \in (0, 1/2]$, with probability at least $1 - \delta$,
\[
\kl\left(p^\star, \widehat{p}\right)
\le \min_{j \in [M]}\kl\left(p^\star, p_j\right)
+ \frac{19\log(2nM)\log(1/\delta)}{n}.
\]
\end{proposition}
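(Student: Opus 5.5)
The plan is to smooth each dictionary density with a small uniform mixture, so that density ratios become bounded, and then apply \Cref{lem:onlinetobatch} directly to the smoothed dictionary. No Hellinger localization step is needed, since the guarantee is an oracle inequality. Concretely, let $\bar p = \frac{1}{M}\sum_{k=1}^M p_k$. For a mixing weight $\lambda \in (0,1/2]$ to be chosen later, define
\[
q_j = (1-\lambda) p_j + \lambda \bar p, \qquad j \in [M].
\]
For every $x$ and every $i,j$ we have $q_i(x) \le p_i(x) + \lambda \bar p(x) \le (M + \lambda)\bar p(x)$ and $q_j(x) \ge \lambda \bar p(x)$. Hence $\sup_x |\log(q_i(x)/q_j(x))| \le \log((M+1)/\lambda) =: m$.

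Applying \Cref{lem:onlinetobatch} to $\{q_1,\dots,q_M\}$ on the whole sample gives, with probability at least $1-\delta$,
\[
\kll{p^\star}{\wh p} \le \min_j \kll{p^\star}{q_j} + \frac{2\log M + \frac{25}{4}(e-2)(m+1)\log(1/\delta)}{n}.
\]
It remains to compare $\kll{p^\star}{q_j}$ with $\kll{p^\star}{p_j}$. Since $q_j \ge (1-\lambda) p_j$ pointwise, we get
\[
\kll{p^\star}{q_j} = \int p^\star \log\frac{p^\star}{q_j} \le \kll{p^\star}{p_j} + \log\frac{1}{1-\lambda} \le \kll{p^\star}{p_j} + 2\lambda.
\]
Choose $\lambda = 1/n$ (fine for $n\ge 2$). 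Then the smoothing cost is $2/n$, and $m + 1 = \log(e(M+1)n) \le c\log(2nM)$.

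The final step is bookkeeping. Using $\delta \le 1/2$, we have $\log(1/\delta) \ge \log 2$, so the terms $2\log M/n$ and $2/n$ can be absorbed into a multiple of $\log(2nM)\log(1/\delta)/n$. The coefficient $\frac{25}{4}(e-2)\approx 4.49$ multiplies $\log(e(M+1)n) \le \log(2nM) + \log(e\cdot\tfrac{M+1}{2M}) \le \log(2nM) + 1$, and $\log(2nM)\ge \log 8$.

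The only step needing care is checking that the constants sum to at most $19$. I expect this to go through with room to spare, since roughly $4.5\,(1 + 1/\log 8) + 2/\log 2 + 2/(\log 2\log 8) \approx 11$. If the bound turned out tight, I would instead choose $\lambda = 1/(2n)$, or bound $\log(1/(1-\lambda)) \le \lambda/(1-\lambda)$ more carefully.
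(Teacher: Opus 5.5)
This is essentially the paper's proof: it uses the same smoothed dictionary $\{(1-\tfrac1n)p_j+\tfrac1n\bar p\}$, runs the estimator of \Cref{lem:onlinetobatch} on the full sample, and pays $\log\frac{1}{1-1/n}\le \frac2n$ for the smoothing; your constant bookkeeping (roughly $11\le 19$) checks out. The one case you leave open is $n=1$, which the paper treats separately: there the smoothed class collapses to $\{\bar p\}$, and $\bar p\ge p_j/M$ gives $\kl(p^\star,\bar p)\le \kl(p^\star,p_j)+\log M\le \kl(p^\star,p_j)+19\log(2M)\log(1/\delta)$ because $\delta\le 1/2$.
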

Compared to \Cref{thm:modelaggregation} the bound contains an additional dependence on $n$ under the logarithm. The estimator and the analysis are presented in \Cref{sec:remainingproofs}. The idea behind the estimator is to run the predictor of \Cref{lem:onlinetobatch} on a deterministically smoothed class $\Qs$ obtained from the given set $\mathcal{P}$.

Our next result is an analog of \Cref{thm:convexaggregation} in the misspecified case.
\begin{proposition}
\label{prop:misspecconvexaggregation}
    Let $X_{1}, \ldots, X_n$ with $n \ge 2$ be an i.i.d.\ sample from some unknown density $p^\star$. Let $\mathcal{P} = \{p_1, \ldots, p_\M\}$ be a set of densities with respect to $\mu$ defined on $\X$. There is a density estimator $\widehat{p}$ such that, with probability at least $1 - \delta$,
    \[
    \kl\left(p^\star, \widehat{p}\right) \le \inf\limits_{p \in \operatorname{conv}(\mathcal{P})}\kl\left(p^\star, p\right) + \frac{43M\log^2(3nM) + 22\log(3nM)\log(1/\delta)}{n}.
    \]
\end{proposition}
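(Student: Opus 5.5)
The plan is to reduce the misspecified convex aggregation problem to a finite misspecified model aggregation problem with bounded density ratios, and then invoke \Cref{lem:onlinetobatch} directly. No Hellinger localization is used, since $p^\star\notin\conv(\mathcal P)$ and the tournament is no longer controlled. The price is a global discretization of the simplex, which produces the $\log^2(3nM)$ factor.

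\textbf{Construction.} Fix a smoothing level $\gamma = 1/n$ and a grid parameter $\eta$ of order $1/(nM)$. Let $\Delta_\eta$ be a finite subset of $\Delta_{M-1}$ with the following property: for every $\theta\in\Delta_{M-1}$ there is $\phi\in\Delta_\eta$ with $\phi_j \ge \theta_j/2$ for every $j$ such that $\theta_j\ge \eta$. A concrete choice is a multiplicative dyadic grid: each coordinate is rounded down to a value in $\{0\}\cup\{2^{-k}: 2^{-k}\ge \eta\}$ and then renormalized. Such a grid has at most $(\log_2(1/\eta)+2)^M$ elements, so $\log|\Delta_\eta|\lesssim M\log\log(nM)\le M\log(3nM)$. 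Alternatively, one can use a uniform grid of mesh $1/(nM)$ with cardinality $(nM+1)^M$, which gives $\log|\Delta_\eta|\le M\log(3nM)$ directly. We then define
\[
\Qs = \Big\{(1-\gamma)\,p_\phi + \tfrac{\gamma}{M}\textstyle\sum_{k} p_k \;:\; \phi\in\Delta_\eta\Big\}.
\]
Every ratio between two elements of $\Qs$ is at most $M/\gamma = nM$, so we may take $m = \log(nM)$ in \Cref{lem:onlinetobatch}. Running that estimator on the whole sample gives, with probability at least $1-\delta$,
\[
\kll{p^\star}{\wh p}\le \min_{q\in\Qs}\kll{p^\star}{q} + \frac{2M\log(3nM) + 11(\log(nM)+1)\log(1/\delta)}{n}.
\]
This already matches the $\log(3nM)\log(1/\delta)$ term in the statement.

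\textbf{Approximation.} It remains to show that $\min_{q\in\Qs}\kll{p^\star}{q}\le \inf_{\theta}\kll{p^\star}{p_\theta} + O\big(M\log^2(3nM)/n\big)$. Fix $\theta$, which may be taken near-optimal. Then:
\begin{itemize}
\item Smoothing costs little: since $(1-\gamma)p_\theta+\gamma\bar p\ge (1-\gamma)p_\theta$ with $\bar p = \tfrac1M\sum_k p_k$, we get $\kll{p^\star}{\cdot}\le \kll{p^\star}{p_\theta}+\log\frac{1}{1-\gamma}\le \kll{p^\star}{p_\theta} + 2/n$.
\item Replacing $\theta$ by its grid point is the delicate step. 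With a pure additive grid, coordinates with small $\theta_j$ are not dominated multiplicatively, so $\log\big(p_\theta(x)/q(x)\big)$ is not uniformly bounded.
\end{itemize}

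\textbf{Main obstacle and how to handle it.} I would write $\theta = \theta^{\ge}+\theta^{<}$, splitting the coordinates into those with $\theta_j\ge \eta$ and those below it. The small part has total mass $\theta^{<}(\mathcal X)\le M\eta\le 1/n$, and its density satisfies $\sum_{j:\theta_j<\eta}\theta_jp_j\le M\eta\,\bar p$, so it is absorbed by the smoothing term $\gamma\bar p$ when $M\eta\le\gamma$. On the large coordinates the grid gives $q\ge \tfrac{1-\gamma}{2}\,\theta^{\ge}$ pointwise. Hence $q \ge c\,p_\theta$ pointwise for an absolute constant $c>0$, which yields $\kll{p^\star}{q}\le \kll{p^\star}{p_\theta}+\log(1/c)$.

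That constant-order loss is too large, so it must be refined. I would use a finer multiplicative grid, rounding each coordinate to within a factor $(1+1/n)$. This gives $q\ge (1-O(1/n))p_\theta$ and an additive loss of $O(1/n)$. The cost is cardinality $(n\log(nM))^{M}$, so $\log|\Qs|\lesssim M\log(3nM)$. The worst case I anticipate is $\log|\Qs| \lesssim M\log^2(3nM)$, which is exactly the budget in the statement. Combining the three contributions and tracking the constants then gives the claimed bound.
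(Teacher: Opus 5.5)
Your proposal is correct, but it takes a different route from the paper. The paper takes $\wh p$ to be the MLE over the full continuous smoothed class $\{(1-\frac1n)p+\frac{1}{nM}\sum_{q\in\mathcal P}q : p\in\conv(\mathcal P)\}$. It analyzes this estimator with an offset-process argument:
\begin{itemize}
\item it applies the midpoint inequality of Lemma~\ref{lem:logloss-Tstyle} to both the empirical and the population minimizer;
\item it then uses symmetrization and an $\ell_1$-net of the simplex at scale $\rho=1/(3n^2M)$, via the Lipschitz bound $nM\|\alpha-\beta\|_1$ on log-densities;
\item it finishes with a Chernoff bound at $\lambda=n/(16(\log(2nM)+1))$.
\end{itemize}
The net entropy $M\log(9n^2M)$ is multiplied by $1/\lambda$, and this is exactly where the $M\log^2(3nM)$ term comes from.

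You instead discretize the simplex, smooth, and apply Lemma~\ref{lem:onlinetobatch} to the resulting finite class. This is essentially the construction of Proposition~\ref{prop:misspecmodelaggregation} run over a grid of mixtures rather than over $\mathcal P$. In Lemma~\ref{lem:onlinetobatch} the term $\log|\Qs|$ enters only through the exponential-weights regret and is not multiplied by the ratio bound $m$. So your route gives $O\bigl((M\log(3nM)+\log(3nM)\log(1/\delta))/n\bigr)$, one logarithm better on the $M$ term than the statement. Your hedge about a possible $M\log^2(3nM)$ worst case is therefore unnecessary. The price is computational: you aggregate over $(nM)^{O(M)}$ candidates, while the paper's MLE over a convex class is a convex program.

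Your detour through constant-factor domination followed by a $(1+1/n)$-multiplicative grid is also unnecessary. The remark that a pure additive grid fails is mistaken once smoothing is present. Take the uniform grid of mesh $1/(nM)$, which has at most $(nM+1)^M$ points. Round each $\theta_j$ down to the grid and put the deficit, which is itself a multiple of $1/(nM)$, on one coordinate. This gives $\phi_j\ge\theta_j-\frac{1}{nM}$, hence $(1-\frac1n)\phi_j+\frac{1}{nM}\ge(1-\frac1n)\theta_j$ for every $j$. So $q\ge(1-\frac1n)p_\theta$ pointwise, and the approximation loss is at most $\log\frac{n}{n-1}\le\frac2n$ directly.
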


The main difference with \Cref{thm:convexaggregation} consists in additional squared logarithmic terms in the risk bound. 
The estimator in \Cref{prop:misspecconvexaggregation} corresponds to the MLE over a smoothed density class:
\begin{framed}
\begin{itemize}
\item Construct a class of densities 
\[
\Qs
=
\bigg\{
\left(1 - \frac{1}{n}\right)p
+ \frac{1}{nM}\sum_{q \in \mathcal{P}} q
:\ p \in \operatorname{conv}(\mathcal{P})
\bigg\}.
\]
\item Let $\widehat{p}$ be the MLE in $\Qs$. That is,
\[
\widehat{p} = \arg\min_{p \in \Qs}\wh L(p).
\]
\end{itemize}
\end{framed}

The proof of \Cref{prop:misspecconvexaggregation} follows relatively standard lines using techniques from empirical process theory and we defer it to \Cref{sec:remainingproofs}. 

\section{Acknowledgments}
S.C.~is supported by the NDSEG Fellowship Program, Tselil Schramm’s NSF CAREER Grant no. 2143246, and Gregory Valiant’s Simons Foundation Investigator Award and NSF award AF-2341890.
J.M.~is supported by a grant of the French National Research Agency (ANR), ``Investissements d’Avenir'' (LabEx Ecodec/ANR-11-LABX-0047).

\bibliographystyle{abbrv}
{
  \small
\bibliography{mybib}

\begin{thebibliography}{1}

\bibitem{placeholder}
A.~Author.
\newblock A placeholder reference.
\newblock \emph{Journal of Examples}, 1(1):1--2, 2025.

\end{thebibliography}
}
\appendix

\section{High-probability online-to-batch conversion}
\label{sec:highprobabilityonlinetobatch}
We present a self-contained version of the argument of Theorem 1 in \cite{vanderhoeven2023high}, adapted to the logarithmic loss and a finite class of densities. 

Consider $\M$ distributions $p_1, \ldots, p_\M$ with corresponding densities $p_1, \ldots, p_\M$. Given the sample $X_1, \ldots, X_n$ define recursively the following sequence of densities:
\[
\widehat{p}_{1} = \frac{1}{\M}\sum\limits_{i = 1}^M p_i,
\]
and for $j \in \{1, 2, \ldots, n-1\}$ set
\begin{equation}
\label{eq:individualdensities}
\widehat{p}_{j+1} = \sum\limits_{i = 1}^Mp_i \frac{\prod_{k = 1}^{j}\left(\frac{1}{2}p_i(X_k) + \frac{1}{2}\widehat{p}_{k}(X_k)\right)}{\sum\nolimits_{h = 1}^M\prod_{k = 1}^{j}\left(\frac{1}{2}p_h(X_k) + \frac{1}{2}\widehat{p}_{k}(X_k)\right)},
\end{equation}
The final predictor will be given by 
\begin{equation}
\label{eq:progressivemixturerule}
\widehat{p} = \frac{1}{n}\sum\limits_{i = 1}^n\widehat{p}_{i},
\end{equation}
which is a version of the classical progressive Bayesian mixture estimator \cite{barron1987bayes, yang1999information} with the key difference that in \eqref{eq:progressivemixturerule} for each individual weight in \eqref{eq:individualdensities} we consider the product of shifted elements $\left(\frac{1}{2}p_i(X_k) + \frac{1}{2}\widehat{p}_{k}(X_k)\right)$ instead of the canonical product of $p_i(X_k)$ in the original formulation.

We begin by providing the following lemma which provides the regret bound for the sequence $\widehat{p}_{1}, \ldots, \widehat{p}_{n}$.

\begin{lemma}
\label{eq:regretboundforshiftedloss}
Let $p_1,\ldots,p_\M$ be densities and let $X_1,\ldots,X_n$ be any sequence of points.
For the sequence $(\widehat p_t)_{t=1}^n$ defined by \eqref{eq:individualdensities} and
\eqref{eq:progressivemixturerule}, it holds that
\begin{equation}\label{eq:shifted-regret}
\sum_{t=1}^{n}-\log\big(\widehat{p}_t(X_t)\big)
\le
\min_{j\in[M]}\sum_{t=1}^{n}-\log\left(\tfrac{1}{2}\widehat{p}_t(X_t)+\tfrac{1}{2}p_j(X_t)\right)+\log M .
\end{equation}
\end{lemma}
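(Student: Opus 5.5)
The plan is the standard exponential-weights potential argument, adapted to the shifted products. Set $w_{i,1}=1/M$ and
\[
w_{i,t}=\frac{\prod_{k<t}\bigl(\tfrac12 p_i(X_k)+\tfrac12\widehat p_k(X_k)\bigr)}{\sum_h\prod_{k<t}\bigl(\tfrac12 p_h(X_k)+\tfrac12\widehat p_k(X_k)\bigr)},
\]
so that $\widehat p_t=\sum_i w_{i,t}p_i$ by \eqref{eq:individualdensities}. The potential is
\[
W_t=\sum_{i=1}^M\prod_{k\le t}\frac{\tfrac12 p_i(X_k)+\tfrac12\widehat p_k(X_k)}{\widehat p_k(X_k)},\qquad W_0=M.
\]
These are the same unnormalized products rescaled by the common factor $\prod_{k\le t}\widehat p_k(X_k)^{-1}$. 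This factor does not depend on $i$, so $w_{i,t+1}$ is still proportional to the $i$-th summand of $W_t$.

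The key step is to show that $W_t$ is exactly constant. We write
\[
\frac{W_t}{W_{t-1}}=\sum_i w_{i,t}\,\frac{\tfrac12 p_i(X_t)+\tfrac12\widehat p_t(X_t)}{\widehat p_t(X_t)}.
\]
Since $\sum_i w_{i,t}p_i(X_t)=\widehat p_t(X_t)$ and $\sum_i w_{i,t}=1$, this equals $\tfrac12+\tfrac12=1$. Hence $W_n=M$.

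Next we lower bound $W_n$ by any single summand. For each $j$,
\[
\prod_{t=1}^n\frac{\tfrac12 p_j(X_t)+\tfrac12\widehat p_t(X_t)}{\widehat p_t(X_t)}\le M.
\]
Taking logarithms and rearranging gives
\[
\sum_t-\log\widehat p_t(X_t)\le\sum_t-\log\bigl(\tfrac12\widehat p_t(X_t)+\tfrac12 p_j(X_t)\bigr)+\log M.
\]
Minimizing over $j$ yields \eqref{eq:shifted-regret}. Note that the estimator $\widehat p$ in \eqref{eq:progressivemixturerule} plays no role in this lemma; only the sequence $(\widehat p_t)$ matters.

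The only delicate point is degeneracy. If $\widehat p_t(X_t)=0$ for some $t$, the left side is $+\infty$, but then every $p_i(X_t)=0$ for $i$ with $w_{i,t}>0$. I would handle this with the conventions $-\log 0=+\infty$ and $0/0$ excluded. The cleanest route is to assume $\widehat p_t(X_t)>0$, since $\widehat p_1>0$ wherever some $p_i>0$, and to note that otherwise both sides are infinite for the minimizing $j$. Apart from this bookkeeping, I expect no obstacle: the identity $W_t=W_{t-1}$ is exact because the shifted loss mixes with factor $\tfrac12$.
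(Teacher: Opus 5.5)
Your proof is correct and is exactly the standard exponential-weights (mixability) argument for the logarithmic loss that the paper invokes by citation rather than writing out (Vovk; Proposition~1 of \cite{vanderhoeven2023high}). The identity $\sum_i w_{i,t}\bigl(\tfrac12 p_i(X_t)+\tfrac12\widehat p_t(X_t)\bigr)=\widehat p_t(X_t)$ makes your normalized potential constant, bounding $W_n=M$ from below by a single summand gives \eqref{eq:shifted-regret}, and your handling of degeneracies is adequate, since well-definedness of the weights already forces $\widehat p_k(X_k)>0$ for all $k<n$.
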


\begin{proof}
The proof repeats the standard lines for the logarithmic loss \cite{vovk1990aggregating}. See the proof of the more general Proposition 1 in \cite{vanderhoeven2023high}.
\end{proof}

We also need the following elementary result.
\begin{lemma}
\label{lem:logloss-Tstyle}
Let $m>0$ and $x, y \in \R^+$ such that 
$\left|\log\left(x/y\right)\right|\le m$.
Then, 
\begin{equation}\label{eq:mid-log}
-\log\left(\frac{x+y}{2}\right)
\le
\frac{-\log x-\log y}{2}
-
\frac{\left(\log\left(x/y\right)\right)^2}{8(m+1)}.
\end{equation}
\end{lemma}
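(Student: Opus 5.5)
The plan is to reduce \eqref{eq:mid-log} to a one-dimensional inequality for $\log\cosh$ by symmetrizing around the geometric mean of $x$ and $y$. Set $u = \tfrac12\log(x/y)$, so that $|u|\le m/2$. Writing $x = \sqrt{xy}\,e^{u}$ and $y=\sqrt{xy}\,e^{-u}$ gives
\[
\frac{x+y}{2} = \sqrt{xy}\,\cosh(u),
\qquad
-\log\Big(\frac{x+y}{2}\Big) = \frac{-\log x - \log y}{2} - \log\cosh(u).
\]
Also $(\log(x/y))^2/(8(m+1)) = u^2/(2(m+1))$. Hence \eqref{eq:mid-log} is equivalent to
\[
\log\cosh(u) \ge \frac{u^2}{2(m+1)} \qquad\text{whenever } |u|\le m/2 .
\]
Both sides are even in $u$, so I will assume $u\ge 0$. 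Since $m+1\ge 2u+1$, it suffices to prove the $m$-free bound $\log\cosh(u)\ge u^2/(2(1+2u))$ for all $u\ge0$.

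For this I will write $\log\cosh(u)=\int_0^u\tanh(s)\,\di s$ and use the pointwise lower bound $\tanh(s)\ge s/(1+s)$ for $s\ge 0$. To verify it, multiply by $(1+s)\cosh s>0$. It becomes $(1+s)\sinh s\ge s\cosh s$, i.e.\ $\sinh s\ge s(\cosh s-\sinh s)=s e^{-s}$. Multiplying by $e^{s}$, this reads $(e^{2s}-1)/2\ge s$, which is just $e^{2s}\ge 1+2s$.

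Then, for $0\le s\le u$, we have $s/(1+s)\ge s/(1+2u)$. Integrating gives $\log\cosh(u)\ge \int_0^u s/(1+2u)\,\di s = u^2/(2(1+2u))$, which completes the argument.

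The only step needing care is the choice of comparison function: a pure quadratic lower bound on $\log\cosh$ fails for large $u$ because $\log\cosh$ grows only linearly. The factor $(m+1)$ in the denominator is exactly what absorbs this, via $1+2u\le m+1$. The elementary inequality $\tanh s\ge s/(1+s)$ is the main point to check, and it reduces to $e^{2s}\ge 1+2s$ as above.
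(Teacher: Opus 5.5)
Your proof is correct, but it takes a different route from the paper's.

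The paper keeps $h=-\log$ and the midpoint $s=(x+y)/2$, and introduces $g(w)=h(w)-h(s)-(h(w)-h(s))^2/\gamma$ with $\gamma=2(m+1)$. It then uses the identity $h''=(h')^2$ to get $g''(w)=h''(w)\bigl(1-\tfrac{2}{\gamma}(h(w)-h(s)+1)\bigr)\ge 0$ between $x$ and $y$. Convexity of $g$ together with $g(s)=0$ bounds $h(s)$ by the average of $h(x),h(y)$ minus $\bigl((h(x)-h(s))^2+(h(y)-h(s))^2\bigr)/(2\gamma)$. The elementary inequality $(a-c)^2+(b-c)^2\ge\tfrac12(a-b)^2$ finishes the proof. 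That argument only uses a curvature relation between $h''$ and $(h')^2$, which is why it carries over to general exp-concave losses, the setting the lemma is borrowed from.

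You instead use the exact identity $(x+y)/2=\sqrt{xy}\cosh(u)$, which is special to the logarithm. This reduces everything to the scalar inequality $\log\cosh u\ge u^2/(2(m+1))$ on $|u|\le m/2$. Your verification via $\tanh s\ge s/(1+s)$ and integration is fully explicit and avoids the second-derivative bookkeeping.

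Your argument also leaves some slack. Using $1+s\le 1+u$ rather than $1+2u$ gives $\log\cosh u\ge u^2/(2(1+u))$. That improves the denominator $8(m+1)$ to $4(m+2)$.
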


\begin{proof}
Let $h(w)=-\log w$. Then $h'(w)=-w^{-1}$ and $h''(w)=w^{-2}=(h'(w))^2$.
Fix $(x,y)\in \R^2$ and set $s=\frac{x+y}{2}$. Define
\[
g(w)=h(w)-h(s)-\frac{\left(h(w)-h(s)\right)^2}{\gamma}.
\]
A direct computation gives
\[
g''(w)=h''(w)\left(1-\frac{2}{\gamma}\left(h(w)-h(s)+1\right)\right).
\]
Since $\left|\log\left(x/y\right)\right|\le m$, we have $\left|h(x)-h(y)\right|\le m$. For any $w$ between $x$ and $y$, the monotonicity of $h$ implies $h(w)$ and $h(s)$ lie between $h(x)$ and $h(y)$, hence $h(w)-h(s)\le \left|h(x)-h(y)\right|\le m$. With $\gamma=2(m+1)$ we obtain $g''(w)\ge 0$ on the segment between $x$ and $y$. By convexity, $g(s)\le \frac{1}{2}g(x)+\frac{1}{2}g(y)$. Since $g(s)=0$, this yields
\[
h(s)\le \frac{h(x)+h(y)}{2}-\frac{\left(h(x)-h(s)\right)^2+\left(h(y)-h(s)\right)^2}{2\gamma}.
\]
Using $\left(a-c\right)^2+\left(b-c\right)^2\ge \frac{1}{2}\left(a-b\right)^2$ with $a=h(x)$, $b=h(y)$, $c=h(s)$, we get
\[
h\left(\frac{x+y}{2}\right)\le \frac{h(x)+h(y)}{2}-\frac{\left(h(x)-h(y)\right)^2}{4\gamma}.
\]
Substituting $h(w)=-\log w$ and $\gamma=2(m+1)$ proves the claim.
\end{proof}
Another useful lemma is a simple version of Freedman's inequality. For the version with these explicit constants we refer to \cite[Theorem 1]{beygelzimer2011contextual}. Throughout this section we denote $\E_{i - 1}[\cdot] = \E[\cdot |X_1, \ldots, X_{i - 1}]$.

\begin{lemma}
\label{lem:freedman}
Let $X_1, \ldots, X_n$ be a martingale difference sequence adapted to a filtration $(\mathcal{F}_i)_{i \le n}$. Suppose that $|X_i| \le m$ almost surely. Then for any $\delta \in (0, 1)$ and $\lambda \in [0, 1/m]$, with probability at least $1 - \delta$,
\[
\sum\limits_{i = 1}^n X_i \le \lambda(e - 2)\sum\limits_{i = 1}^n\E_{i - 1}[X_i^2] + \frac{\log(1/\delta)}{\lambda}.
\]
\end{lemma}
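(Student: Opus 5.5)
The plan is the standard exponential supermartingale argument behind Freedman's inequality. The case $\lambda=0$ is vacuous, since the right-hand side is $+\infty$, so I fix $\lambda\in(0,1/m]$. Write $S_k=\sum_{i\le k}X_i$ and $V_k=\sum_{i\le k}\E_{i-1}[X_i^2]$. The goal is to show that
\[
Z_k=\exp\bigl(\lambda S_k-(e-2)\lambda^2 V_k\bigr)
\]
is a nonnegative supermartingale with $Z_0=1$. Once this holds, $\E[Z_n]\le 1$. Markov's inequality then gives $\P(Z_n\ge 1/\delta)\le\delta$. On the complementary event, $\lambda S_n-(e-2)\lambda^2V_n<\log(1/\delta)$, and dividing by $\lambda>0$ yields exactly the claimed bound.

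The key elementary step is the scalar inequality
\[
e^y\le 1+y+(e-2)y^2 \qquad \text{for all } y\le 1.
\]
I would prove it by showing that $f(y)=(e^y-1-y)/y^2$, extended by $f(0)=1/2$, is nondecreasing on $\R$. This follows from the series $f(y)=\int_0^1(1-s)e^{sy}\,\di s$, in which each integrand is increasing in $y$. Hence $f(y)\le f(1)=e-2$ for every $y\le 1$.

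Because $|X_i|\le m$ and $\lambda\le 1/m$, we have $\lambda X_i\le 1$ almost surely, so the inequality applies with $y=\lambda X_i$. Taking conditional expectations and using $\E_{i-1}[X_i]=0$ gives
\[
\E_{i-1}[e^{\lambda X_i}]\le 1+(e-2)\lambda^2\E_{i-1}[X_i^2]\le \exp\bigl((e-2)\lambda^2\E_{i-1}[X_i^2]\bigr),
\]
where the last step uses $1+u\le e^u$. The increment $\E_{i-1}[X_i^2]$ is $\mathcal F_{i-1}$-measurable, so it can be pulled out of the conditional expectation. This gives $\E_{i-1}[Z_i]\le Z_{i-1}$, i.e. the supermartingale property, and iterating the tower property yields $\E[Z_n]\le 1$.

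I expect no real obstacle here. The only point requiring care is the monotonicity of $f$ over the whole range $y\le 1$, including negative $y$, which the integral representation handles uniformly. A minor secondary point is the measurability of the predictable quadratic variation term when it is moved outside $\E_{i-1}$.
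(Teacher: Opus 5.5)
Your proposal is correct. It is essentially the same argument as the source the paper relies on: the paper gives no proof of its own and cites Theorem~1 of Beygelzimer et al., whose proof is exactly this exponential supermartingale argument, built from the scalar bound $e^y\le 1+y+(e-2)y^2$ for $y\le 1$ and followed by Markov's inequality.
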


Finally, we restate \Cref{lem:onlinetobatch} and specify the predictor $\widehat{p}$.
\begin{theorem}
  Assume $X_{1},\ldots,X_{n}$ are
\iid random variables with arbitrary and unknown density $p^\star$ on $(\X, \mu)$.
Let $p_{1},\ldots,p_{\M}$ be densities such that for some $m>0$,
\[
\sup_{x \in \X}\max_{i,j\in[M]}\left|\log\left(\frac{p_i(x)}{p_j(x)}\right)\right|\le m.
\]
Let $\left(\widehat p_i\right)_{i=1}^{n}$ and $\widehat p=\frac{1}{n}\sum_{i=1}^{n}\widehat p_i$ be defined by \eqref{eq:individualdensities} and \eqref{eq:progressivemixturerule}. Then for every $\delta\in(0,1)$, with probability at least $1-\delta$,
\[
\kll{p^\star}{\widehat{p}} \le \min_{j \in [M]}\kll{p^\star}{p_j} + \frac{2\log\left(M\right)+\frac{25}{4}\left(e-2\right)\left(m+1\right)\log\left(\frac{1}{\delta}\right)}{n} .
\]
\end{theorem}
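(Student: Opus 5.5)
The proof is the standard online-to-batch argument, with the mixability slack of Lemma~\ref{lem:logloss-Tstyle} used to pay for the martingale variance in Freedman's inequality. Fix $j^\star$ with $\kll{p^\star}{p_{j^\star}}$ minimal. The target is a high-probability bound on $\sum_t \E[-\log \widehat p_t(X)] - \E[-\log p_{j^\star}(X)]$, i.e. on $\sum_t \kll{p^\star}{\widehat p_t} - n\kll{p^\star}{p_{j^\star}}$. Convexity of $q\mapsto \kll{p^\star}{q}$ (Jensen on $\widehat p=\frac1n\sum_t\widehat p_t$) then divides this by $n$. Each $\widehat p_t$ is a mixture of the $p_i$, so $|\log(\widehat p_t/p_j)|\le m$ pointwise, and likewise $|\log(\widehat p_t/\tfrac12(\widehat p_t+p_j))|\le m$.

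\textbf{Step 1 (regret with quadratic slack).} Combine Lemma~\ref{eq:regretboundforshiftedloss} with Lemma~\ref{lem:logloss-Tstyle}, applied to $x=\widehat p_t(X_t)$ and $y=p_{j^\star}(X_t)$. Writing $Z_t=\log(\widehat p_t(X_t)/p_{j^\star}(X_t))$ with $|Z_t|\le m$, this gives
\[
\sum_t -\log\widehat p_t(X_t) \le \sum_t\Bigl(\tfrac{-\log\widehat p_t(X_t)-\log p_{j^\star}(X_t)}{2} - \tfrac{Z_t^2}{8(m+1)}\Bigr) + \log M .
\]
Rearranging yields
\[
\sum_t -Z_t \le 2\log M - \frac{1}{4(m+1)}\sum_t Z_t^2 .
\]
(Here $-Z_t$ is the per-round excess loss of $\widehat p_t$ over $p_{j^\star}$.)

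\textbf{Step 2 (martingale concentration).} Let $D_t=\E_{t-1}[-Z_t]-(-Z_t)$. This is a martingale difference sequence with $|D_t|\le 2m$, and $\E_{t-1}[D_t^2]\le \E_{t-1}[Z_t^2]$. Note that $\E_{t-1}[-Z_t]=\kll{p^\star}{\widehat p_t}-\kll{p^\star}{p_{j^\star}}$. Apply Lemma~\ref{lem:freedman} with some $\lambda\lesssim 1/(m+1)$. A cruder bound $|D_t|\le 2m$ forces $\lambda\le 1/(2m)$, which is fine up to constants; I will treat this as a small adjustment. This gives, with probability $1-\delta$,
\[
\sum_t \E_{t-1}[-Z_t] \le \sum_t(-Z_t) + \lambda(e-2)\sum_t\E_{t-1}[Z_t^2] + \frac{\log(1/\delta)}{\lambda}.
\]

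\textbf{Step 3 (main obstacle: matching the variance terms).} The negative term from Step 1 involves the realized squares $Z_t^2$, while Freedman produces the conditional second moments $\E_{t-1}[Z_t^2]$. To bridge them, I would apply a second one-sided concentration step, Lemma~\ref{lem:freedman} to $Z_t^2-\E_{t-1}Z_t^2$ (bounded by $m^2$, with variance at most $m^2\E_{t-1}Z_t^2$). This shows $\sum_t Z_t^2\ge \tfrac12\sum_t\E_{t-1}Z_t^2 - O(m^2\log(1/\delta))$.

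A cleaner alternative, which is what I would try first, works directly with the bounded range. One uses $\E_{t-1}[Z_t^2]$ through the inequality $\E_{t-1}[-Z_t]+c\,\E_{t-1}[Z_t^2]\ge$ excess-risk-type bounds, and chooses $\lambda=\frac{1}{4(e-2)(m+1)}\cdot\frac{4}{25}$-type constants. These are tuned so that the $\lambda(e-2)\sum\E_{t-1}Z_t^2$ term is absorbed by $\frac{1}{4(m+1)}\sum Z_t^2$ after the second concentration step, with failure probability split between the two events. Summing the inequalities and dividing by $n$ then leaves $\frac{2\log M+\frac{25}{4}(e-2)(m+1)\log(1/\delta)}{n}$. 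The exact constant $25/4$ reflects this optimization of $\lambda$, which I would verify last.
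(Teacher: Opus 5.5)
Your Step~1 and the overall architecture (the mixability slack of Lemma~\ref{lem:logloss-Tstyle} combined with the shifted regret of Lemma~\ref{eq:regretboundforshiftedloss}, then Freedman, then Jensen) match the paper. Step~3, however, does not deliver the statement as written.

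A second Freedman bound on $Z_t^2-\E_{t-1}Z_t^2$, followed by a union bound, is a valid argument. But it necessarily produces a remainder $a\log(1/\delta_1)+b\log(1/\delta_2)$ with $\delta_1+\delta_2=\delta$ and $a,b>0$. This stays above $\min\{a,b\}\log 2$ as $\delta\to 1$, whereas the stated remainder tends to $2\log M/n$.

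Concretely, your factor $\tfrac12$ gives coefficients $8(e-2)(m+1)$ on $\log(1/\delta_1)$ and $\frac{(e-2)m^2}{2(m+1)}$ on $\log(1/\delta_2)$. A better tuning is $\lambda'=1/(5(e-2)m^2)$ in the second Freedman step, $1/\lambda=5(e-2)(m+1)$ in the first, and $\delta_1=\delta_2=\delta/2$. It yields $2\log M+\frac{25}{4}(e-2)(m+1)\log(2/\delta)$: the right order and even the right constant, but with $\log(2/\delta)$ in place of $\log(1/\delta)$.

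So your closing sentence, which claims $\log(1/\delta)$ ``with failure probability split between the two events'', does not follow. The ``cleaner alternative'' is also not an argument: it still relies on the second concentration step, never states the inequality it invokes, and the $\lambda$ it suggests is not the one that works.

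The missing idea is to apply Freedman only once, to an \emph{offset} variable, so that the realized-versus-conditional mismatch never has to be bridged. Let $r_t=-Z_t=\log(p_{j^\star}(X_t)/\widehat p_t(X_t))$ and $v_t=r_t^2/(4(m+1))$, so Step~1 reads $\sum_t r_t\le 2\log M-\sum_t v_t$. Set $W_t=r_t+v_t$. Since $|W_t|\le c_m|r_t|$ with $c_m=\frac{5m+4}{4(m+1)}$, the conditional variance of $W_t$ is at most $c_m^2\E_{t-1}r_t^2=4(m+1)c_m^2\,\E_{t-1}v_t$.

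Apply Freedman to $\E_{t-1}W_t-W_t$ with $\lambda=1/(4(m+1)(e-2)c_m^2)$; one checks $\lambda\le 1/(2\widetilde m)$ for $\widetilde m=m+\frac{m^2}{4(m+1)}\ge|W_t|$. On a single event of probability $1-\delta$ this gives $\sum_t(\E_{t-1}W_t-W_t)\le\sum_t\E_{t-1}v_t+\log(1/\delta)/\lambda$.

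The term $\sum_t\E_{t-1}v_t$ cancels exactly with the one inside $\sum_t\E_{t-1}W_t$, leaving $\sum_t\E_{t-1}r_t\le\sum_t r_t+\sum_t v_t+\log(1/\delta)/\lambda$. The realized $\sum_t v_t$ then cancels against Step~1. This gives $\sum_t\E_{t-1}r_t\le 2\log M+\log(1/\delta)/\lambda$ with $1/\lambda=\frac{(e-2)(5m+4)^2}{4(m+1)}\le\frac{25}{4}(e-2)(m+1)$, which after Jensen is exactly the stated bound.
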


\begin{proof}
Fix $j\in[M]$. Write $\ell_i(q)=-\log\left(q\left(X_i\right)\right)$ and $r_i=\ell_i\left(\widehat p_i\right)-\ell_i\left(p_j\right)=\log\left(\frac{p_j\left(X_i\right)}{\widehat p_i\left(X_i\right)}\right)$. Since $\widehat p_i\left(x\right)$ is a convex combination of $\{p_k\left(x\right)\}_{k=1}^M$ and $\left|\log\left(p_a\left(x\right)/p_b\left(x\right)\right)\right|\le m$ for all $a,b \in [M]$, one has $\exp(-m)\le \widehat p_i\left(x\right)/p_j\left(x\right)\le \exp(m)$ and hence $\left|r_i\right|\le m$ almost surely. Set
\[
v_i=\frac{r_i^2}{4\left(m+1\right)},\quad \textrm{and} \quad Z_i=r_i+v_i.
\]
By construction, $\widehat p_i$ is $\mathcal F_{i-1}$-measurable and $X_i\stackrel{d}{=}X$ is independent of $\mathcal F_{i-1}$. By Jensen's inequality,
\begin{equation}\label{eq:risk-mart}
-\E\log\left(\widehat p(X)\right)+\E\log\left(p_j(X)\right)\le \frac{1}{n}\sum_{i=1}^{n}\E_{i-1}\left[ r_i\right].
\end{equation}
Apply Lemma~\ref{lem:logloss-Tstyle} with $x=\widehat p_i\left(X_i\right)$ and $y=p_j\left(X_i\right)$ we get
\[
-\log\left(\frac{\widehat p_i\left(X_i\right)+p_j\left(X_i\right)}{2}\right)\le \frac{\ell_i\left(\widehat p_i\right)+\ell_i\left(p_j\right)}{2}-\frac{r_i^2}{8\left(m+1\right)}.
\]
Equivalently,
\begin{equation}\label{eq:one-step}
r_i\le 2\left(\tilde\ell_i\left(\widehat p_i\right)-\tilde\ell_i\left(p_j\right)\right)-\frac{r_i^2}{4\left(m+1\right)}=2\left(\tilde\ell_i\left(\widehat p_i\right)-\tilde\ell_i\left(p_j\right)\right)-v_i,
\end{equation}
where $\tilde\ell_i\left(q\right)=-\log\left(\frac{\widehat p_i\left(X_i\right)+q\left(X_i\right)}{2}\right)$. Summing \eqref{eq:one-step} over $i$ and using Lemma~\ref{eq:regretboundforshiftedloss} (which, for each $j$, gives $\sum_{i=1}^{n}\left(\tilde\ell_i\left(\widehat p_i\right)-\tilde\ell_i\left(p_j\right)\right)\le \log\left(M\right)$) yields
\begin{equation}\label{eq:sum-r}
\sum_{i=1}^{n}r_i\le 2\log\left(M\right)-\sum_{i=1}^{n}v_i.
\end{equation}
Consider the martingale differences $Y_i=\E_{i-1}\left[Z_i\right]-Z_i$ with respect to $\left(\mathcal F_i\right)_{i=1}^{n}$. Since $\left|r_i\right|\le m$, one has $\left|Z_i\right|\le \left|r_i\right|+\frac{r_i^2}{4\left(m+1\right)}\le m+\frac{m^2}{4\left(m+1\right)}$. Denote $\widetilde{m} = m+\frac{m^2}{4\left(m+1\right)}$. We have $\left|Y_i\right|\le 2\widetilde{m}$ almost surely. Moreover, since $|r_i|\le m$ we have
$
Z_i=r_i+\frac{1}{4(m+1)}r_i^2=r_i\Bigl(1+\frac{1}{4(m+1)}r_i\Bigr),
$
which implies
$
Z_i^2=r_i^2\Bigl(1+\frac{1}{4(m+1)}r_i\Bigr)^2\le r_i^2\Bigl(1+\frac{1}{4(m+1)}m\Bigr)^2=c_m^2 r_i^2,
$
where $c_m=\frac{5m+4}{4(m+1)}$. Applying Lemma \ref{lem:freedman} with 
\[
\lambda=\frac{1}{4\left(m+1\right)\left(e-2\right)c_m^2}\le \frac{1}{2\widetilde{m}},
\]
we obtain, with probability at least $1-\delta$,
\[
\sum_{i=1}^{n}\left(\E_{i-1}\left[Z_i\right]-Z_i\right)\le \lambda\left(e-2\right)\sum_{i=1}^{n}\E_{i-1}\left[Y_i^2\right]+\frac{\log\left(1/\delta\right)}{\lambda}
\le \sum_{i=1}^{n}\E_{i-1}\left[v_i\right]+\frac{\log\left(1/\delta\right)}{\lambda},
\]
where in the last inequality we used $\E_{i-1}[Y_i^2] \le \E_{i - 1}[Z_i^2] \le 4(m+1)c_m^2\E_{i - 1}[v_i]$.
Expanding $Z_i=r_i+v_i$ and canceling $\sum_{i}\E_{i-1}\left[v_i\right]$ gives
\begin{equation}\label{eq:freedman}
\sum_{i=1}^{n}\E_{i-1}[r_i]\le \sum_{i=1}^{n}r_i+\sum_{i=1}^{n}v_i+\frac{\log\left(1/\delta\right)}{\lambda}.
\end{equation}
Combining \eqref{eq:freedman} with \eqref{eq:sum-r} yields
\[
\sum_{i=1}^{n}\E_{i-1}[r_i]\le 2\log\left(M\right)+\frac{\log\left(1/\delta\right)}{\lambda}.
\]
Inserting this into \eqref{eq:risk-mart} and finally choosing $j\in[M]$ that minimizes $\left(-\E\log\left(p_j(X)\right)\right)$, we have
\[
-\E\log\left(\widehat p(X)\right)-\min_{j\in[M]}\left(-\E\log\left(p_j(X)\right)\right)\le \frac{2\log\left(M\right)}{n}+\frac{\log\left(1/\delta\right)}{n\,\lambda}.
\]
Finally, $1/\lambda=4\left(m+1\right)\left(e-2\right)c_m^2=\frac{\left(e-2\right)\left(5m+4\right)^2}{4\left(m+1\right)}\le \frac{25}{4}\left(e-2\right)\left(m+1\right)$, which gives \eqref{eq:main-theorem-bound}. The claim follows.
\end{proof}

\section{Suboptimality of MLE and
  Bayesian model averaging}
\label{sec:suboptimal}

In this section, we show that simple baseline methods for model aggregation, such as MLE and Bayes model averaging, fail to achieve optimal guarantees in the distribution-free setting.

The following simple fact, mentioned in the introduction, shows that in the distribution-free setting, no selection procedure (such as MLE) achieves any nontrivial guarantee.

\begin{fact}
  \label{fac:model-selection-fails}
  For every $n \geq 1$, there exists a dictionary $\probas = \set{p_1, p_2}$ such that the following holds.
  For every estimator $\wh p = \wh p_n$ based on an \iid sample $X_1, \dots, X_n$ of size $n$ and taking values inside $\probas$, there exists $p^\star \in \probas$ such that if $X_1, \dots, X_n \sim p^\star$, then
  $\P (\kll{p^\star}{\wh p_n} = + \infty) \geq 0.99$.
\end{fact}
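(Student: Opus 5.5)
We will construct an explicit two-point dictionary for which the two densities have disjoint ``private'' regions that are visited only rarely, so that from the sample alone one typically cannot tell which of the two generated it. Then we argue by a standard two-point (Le Cam-type) argument. The mechanism is that $\kll{p^\star}{p}=+\infty$ as soon as $p^\star$ charges a set where $p$ vanishes. Hence any estimator that must output $p_1$ or $p_2$ is infinitely wrong whenever it picks the wrong one.

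\textbf{Construction.} Take $\X=\{0,1,2\}$ with counting measure, and fix a small parameter $\eta>0$ to be chosen depending on $n$. Set
\[
p_1=(1-\eta)\,\mathbf 1_{\{0\}}+\eta\,\mathbf 1_{\{1\}},\qquad
p_2=(1-\eta)\,\mathbf 1_{\{0\}}+\eta\,\mathbf 1_{\{2\}}.
\]
Then $p_1$ gives positive mass to $1$, where $p_2$ vanishes, so $\kll{p_1}{p_2}=+\infty$; symmetrically, $\kll{p_2}{p_1}=+\infty$.

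\textbf{Key step.} Let $E$ be the event that $X_1=\dots=X_n=0$. Under either $p^\star\in\probas$, this event has probability $(1-\eta)^n$, and we choose $\eta$ small enough that $(1-\eta)^n\ge 0.995$ (e.g.\ $\eta=0.004/n$). On $E$ the sample is the same deterministic vector under both hypotheses. The estimator may use external randomization, so let $\pi=\P(\wh p=p_1\mid E)$, which is the same under both hypotheses because the conditional law of the internal randomness given the data does not depend on $p^\star$. Either $\pi\ge 1/2$ or $1-\pi\ge 1/2$.

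\textbf{Conclusion.} Strictly, $1/2$ gives only about $0.4975$, not $0.99$. To reach $0.99$ we use more points: take $K$ densities sharing the common atom $0$, each with its own private atom, where $K$ is large. The fact fixes the dictionary size at $2$, though, so a different idea is needed.

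The better idea is to exploit the fact that $\wh p$ is a function of the sample only, with no external randomness. Assume $\wh p$ is deterministic, or for randomized estimators take a worst case over $p^\star$. Then on $E$ the estimator outputs a fixed element, say $p_1$. Take $p^\star=p_2$: on $E$, $\kll{p_2}{p_1}=+\infty$, so $\P_{p_2}(\kll{p^\star}{\wh p}=\infty)\ge(1-\eta)^n\ge0.99$.

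For randomized estimators, this argument yields $0.99\cdot\max(\pi,1-\pi)\ge 0.495$. Getting to $0.99$ in that case would require restricting to deterministic estimators, which is how I would interpret ``estimator based on the sample''.

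The main obstacle is therefore only this randomization subtlety. The rest consists of routine checks: that $(1-\eta)^n\ge 0.99$ for the chosen $\eta$, and that the KL divergences are infinite.
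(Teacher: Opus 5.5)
Your final argument is exactly the paper's proof: the three-point space with private atoms $1$ and $2$, the event $E=\{X_1=\dots=X_n=0\}$ of probability at least $0.99$ under both hypotheses, and the choice of $p^\star$ as the dictionary element \emph{not} output on $E$. The paper likewise treats $\wh p_n$ as a deterministic function of the sample (your randomization caveat is apt, since a data-independent fair coin between $p_1$ and $p_2$ is wrong with probability only $1/2$), so in a write-up you should state that convention up front and drop the abandoned $K$-atom detour.
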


\begin{proof}
  Let $\X = \set{0, 1, 2}$ and $\eta > 0$ be such that $(1-\eta)^n = 0.99$.
  Define the densities $p_1, p_2$ on $\X$ (with respect to the counting measure) by $p_1 (0) = 1-\eta, p_1 (1) = \eta, p_1 (2) = 0$  and $p_2 (0) = 1-\eta, p_2 (1) = 0, p_2 (2) = \eta$.
  Let $E$ denote the event $\set{X_1 = \dots = X_n = 0}$ and $q$ the value taken by the estimator $\wh p_n$ under $E$, such that (for both $p^\star = p_1$ and $p^\star = p_2$) $\P (\wh p_n = q) \geq \P (E) = (1-\eta)^n = 0.99$.
  Now if $q = p_1$ we take $p^\star = p_2$, and if $q = p_2$ we take $p^\star = p_1$; in both cases, $\kll{p^\star}{q} = + \infty$ and thus $\P (\kll{p^\star}{\wh p_n} = + \infty) \geq 0.99$.
\end{proof}

Next, in the convex aggregation setting, Theorem~2 of \cite{mourtada2025estimation} shows that any estimator that does not depend on $\delta$ must incur a suboptimal high-probability error bound. In particular, there exist instances (already with $\M=2$) for which any expectation-optimal estimator, including the progressive mixture estimator of Yang and Barron (formally defined in \Cref{thm:yang-barron}), incurs a lower bound of order
\[
\frac{\log(1/\delta)\log\log(1/\delta)}{n},
\]
which is worse than what is achieved by our $\delta$-dependent estimator in \Cref{thm:convexaggregation}.

In the remainder of this section, we show that an even stronger lower bound holds for
Bayesian model averaging, which is arguably the most natural model aggregation procedure.
For completeness, we also provide a matching upper bound for the case of two densities, showing that our lower bound is tight up to absolute constants. Our lower bound focuses on two Bernoulli distributions, as in the lower-bound construction of \cite{mourtada2025estimation}.

Our next result shows that Bayesian model averaging, while satisfying nontrivial risk bounds, fails to achieve the optimal dependence on $\delta$ even when $\M=2$. In particular, they necessarily incur a $\log^2(1/\delta)$ factor instead of the optimal $\log(1/\delta)$ factor.

\begin{proposition}
\label{prop:bayes-two}
Let $p^\star,q$ be two densities on $\X$ with respect to $\mu$, and let $X_1,\ldots,X_n$ be i.i.d.\ from $p^\star$. Consider Bayesian model averaging with a uniform prior, which outputs
\[
\widehat p=(1-\widehat\alpha)p^\star+\widehat\alpha\,q,\qquad
\widehat\alpha=\frac{\prod_{i=1}^n q(X_i)}{\prod_{i=1}^n p^\star(X_i)+\prod_{i=1}^n q(X_i)}.
\]
Then for any $\delta\in(0,1/2)$, with probability at least $1-\delta$,
\[
\kl(p^\star,\widehat p)\le \min\left\{\frac{10\log^2(2/\delta)}{n}, 5\log\left(\frac{2}{\delta}\right)\right\}.
\]
Moreover, taking $\X=\{0,1\}$, letting $q$ be the point mass at $1$, and letting $p^\star$ be the Bernoulli law with $(1-p^\star(0))^n=\delta$, we have, for any $\delta\in(0,e^{-2}]$ and with probability at least $\delta$,
\[
\kl(p^\star,\widehat p)\ge \min\left\{\frac{\log^2(1/\delta)}{4n},\, \frac{1}{4}\log\left(\frac{1}{\delta}\right)\right\}.
\]
\end{proposition}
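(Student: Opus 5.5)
The plan is to reduce everything to the scalar random variable $\widehat\alpha$, or equivalently to the log-likelihood ratio $S_n=\sum_{i=1}^n\log(q(X_i)/p^\star(X_i))$, since $\widehat\alpha=e^{S_n}/(1+e^{S_n})$.

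\textbf{Upper bound, first step.} By Markov's inequality applied to $e^{S_n/2}$, as in the proof of Lemma~\ref{lem:almostmle},
\[
\P\bigl(S_n\ge -t\bigr)\le e^{t/2}\bigl(1-\hels(p^\star,q)\bigr)^n\le \exp\bigl(t/2-n\hels(p^\star,q)\bigr).
\]
With $t=n\hels(p^\star,q)-2\log(2/\delta)$ this gives, with probability at least $1-\delta$,
\[
\widehat\alpha\le \min\Bigl\{1,\;\exp\bigl(-n\hels(p^\star,q)+2\log(2/\delta)\bigr)\Bigr\}.
\]
Also, unconditionally, Markov's inequality gives $\P(S_n\ge \log(2/\delta))\le \delta/2$, so $\widehat\alpha\le 1-\delta/3$ with high probability. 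This second fact controls the trivial branch.

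\textbf{Upper bound, second step.} Next I control $\kl(p^\star,(1-\alpha)p^\star+\alpha q)$ as a function of $\alpha$ and $h^2=\hels(p^\star,q)$. The density ratio is at most $1/(1-\alpha)$, and by convexity the squared Hellinger distance is at most $\alpha h^2$. Lemma~\ref{lem:kltohellinger} therefore gives a bound of order
\[
\alpha h^2\max\bigl\{1,\log\tfrac{1}{1-\alpha}\bigr\}.
\]
With $1-\alpha\ge\delta/3$, the logarithmic factor is at most $\log(3/\delta)$, and this yields the branch $5\log(2/\delta)$. For the $\log^2$ branch, plug in $\alpha\le e^{-nh^2+2\log(2/\delta)}$ and split into two regimes.
\begin{itemize}
\item If $nh^2\le 4\log(2/\delta)$, use $\alpha\le 1$ to get a bound of order $h^2\log(2/\delta)\lesssim \log^2(2/\delta)/n$.
\item If $nh^2>4\log(2/\delta)$, the exponential factor gives $\alpha h^2\le h^2e^{-nh^2/2}\lesssim 1/n$, while the log factor stays $O(1)$ or $O(\log(1/\delta))$.
\end{itemize}
The resulting constants are then tuned to reach $10$. \emph{The main obstacle is this case analysis.} One must make sure the $\log\frac1{1-\alpha}$ factor does not compound when $\alpha$ is close to $1$; that is exactly where the unconditional bound $1-\alpha\ge\delta/3$ is needed.

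\textbf{Lower bound.} Take $p^\star=\mathrm{Bernoulli}$ with $p^\star(0)=1-(1-\delta^{1/n})$, i.e.\ $p^\star(1)=\delta^{1/n}$, so that $P(\text{all }X_i=1)=\delta$. On this event, $q(X_i)=1$ for every $i$, so
\[
\widehat\alpha=\frac{1}{\delta+1}\quad\text{and}\quad 1-\widehat\alpha=\frac{\delta}{1+\delta}.
\]
Then $\widehat p(0)=(1-\widehat\alpha)p^\star(0)$, so
\[
\kl(p^\star,\widehat p)\ge p^\star(0)\log\frac{1+\delta}{\delta}+p^\star(1)\log\frac{p^\star(1)}{\widehat p(1)},
\]
and the second term is at least $\log p^\star(1)$. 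Set $a=p^\star(0)=1-e^{-L/n}$ with $L=\log(1/\delta)$.
\begin{itemize}
\item If $L\le n$, then $a\ge L/(2n)$, so the first term is at least about $L^2/(2n)$. The second term is at least $\log(1-a)\ge -L/n$, which is absorbed using $L\ge2$.
\item If $L>n$, then $a\ge 1/2$ and the bound is of order $L/2$.
\end{itemize}
Adjusting constants gives $\min\{L^2/(4n),L/4\}$.
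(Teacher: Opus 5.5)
Your route matches the paper's: a Markov bound on the likelihood ratio keeps $1-\widehat\alpha$ of order $\delta$, the Hellinger-affinity Markov bound makes $\widehat\alpha$ small when $n\hels(p^\star,q)\gg\log(1/\delta)$, and you combine convexity of $\hels$ with Lemma~\ref{lem:kltohellinger}; the lower bound uses the all-ones event. However, two steps do not go through as written.

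\textbf{Lower bound.} You bound the second term $p^\star(1)\log(p^\star(1)/\widehat p(1))$ from below by $\log p^\star(1)=-L/n$. This discards the factor $p^\star(1)=1-a$, and the resulting estimate $aL-L/n$ is too weak.
For $L\le n$, the inequality $L^2/(2n)-L/n\ge L^2/(4n)$ needs $L\ge 4$, not $L\ge 2$. Concretely, at $L=n=2$ even the exact value $a\log(1+1/\delta)-L/n\approx 0.34$ is below the target $1/2$.
For $L>n$ the estimate can be vacuous. With $n=1$ you have $a=1-\delta$, so $a\log\frac{1+\delta}{\delta}-L=(1-\delta)\log(1+\delta)-\delta L<0$.
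The fix is to keep the prefactor: $(1-a)\log(1-a)\ge -a$, which follows from $-\log(1-x)\le x/(1-x)$. Then $\kl(p^\star,\widehat p)\ge a(L-1)\ge aL/2$ for $L\ge 2$. Your bounds $a\ge L/(2n)$ (if $L\le n$) and $a\ge 1/2$ (if $L>n$) then give $L^2/(4n)$ and $L/4$; this is exactly the paper's computation. Also, your first definition of $p^\star(0)$ has a typo: it should be $1-\delta^{1/n}$.

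\textbf{Upper bound constants.} The statement has explicit constants, and your choices do not deliver them.
Union-bounding two events at level $\delta/2$ only gives $1-\widehat\alpha\ge\delta/3$. The $5\log(2/\delta)$ branch then yields $\frac{2}{(\sqrt e-1)^2}\log(3/\delta)\approx 4.75\log(3/\delta)$, which exceeds $5\log(2/\delta)$ for moderate $\delta$ (at $\delta=0.1$: $16.2>15.0$).
Splitting at $n\hels(p^\star,q)=4\log(2/\delta)$ and using $\widehat\alpha\le 1$ below that threshold gives a leading constant of about $4.75\cdot 4\approx 19$ rather than $10$, before the extra $\log(3/\delta)/\log(2/\delta)$ factor. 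Your choice of $t$ also leaves an unused factor $e^{-n\hels(p^\star,q)/2}$ in the failure probability.

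The paper needs no union bound. It splits on deterministic quantities: $n$ versus $2\log(2/\delta)$, and $\hels(p^\star,q)$ versus $2\log(2/\delta)/n$. In the first cases it uses the full-$\delta$ event $\{\prod_i q(X_i)/\prod_i p^\star(X_i)<1/\delta\}$. In the large-$\hels$ case it applies the affinity bound at threshold $t=2\log(2/\delta)/(n\hels(p^\star,q))<1<1/\delta$. A single event of probability at least $1-\delta$ then yields both $\widehat\alpha\,\hels(p^\star,q)<2\log(2/\delta)/n$ and the density-ratio bound $\log(2/\delta)$.
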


\begin{proof}
By Markov's inequality and $\E\big[\prod_{i=1}^n q(X_i)/p^\star(X_i)\big]\le 1$,
\[
\P\left(\frac{\prod_{i=1}^n q(X_i)}{\prod_{i=1}^n p^\star(X_i)}\ge \frac{1}{\delta}\right)\le \delta.
\]
Hence, on an event of probability at least $1-\delta$ we have $\frac{\prod q(X_i)}{\prod p^\star(X_i)}<1/\delta$, which implies
\[
1-\widehat\alpha=\frac{1}{1+\frac{\prod q(X_i)}{\prod p^\star(X_i)}}\ge \frac{\delta}{1+\delta}\ge \frac{\delta}{2}.
\]
Therefore, for all $x$, 
\[
\widehat p(x)=(1-\widehat\alpha)p^\star(x)+\widehat\alpha q(x)\ge (1-\widehat\alpha)p^\star(x)\ge \frac{\delta}{2}p^\star(x),
\]
and so
\begin{equation}
\label{eq:env-bayes}
\sup_{x}\log\left(\frac{p^\star(x)}{\widehat p(x)}\right)\le \log\left(\frac{2}{\delta}\right).
\end{equation}
By Lemma~\ref{lem:kltohellinger},
\begin{equation}
\label{eq:hellingerbound}
\kl(p^\star,\widehat p)\le 5\hels(p^\star,\widehat p)\log\left(\frac{2}{\delta}\right).
\end{equation}
We consider the cases. First, if $2\log(2/\delta)/n > 1$, then the minimum in the statement is achieved at the second term and since $\hels(p^\star,\widehat p) \le 1$, we have
\[
\kl(p^\star,\widehat p)\le 5\log\left(\frac{2}{\delta}\right),
\]
which proves the claim. Otherwise, we have $2\log(2/\delta)/n \le 1$, and we again consider two cases. In the first case, assume $\hels(p^\star,q)\le \frac{2\log(2/\delta)}{n}$.
Using \eqref{eq:env-bayes} and the convexity of the squared Hellinger distance in its second argument, $\hels(p^\star,\widehat p)\le \hels(p^\star,q)$, which implies
\[
\kl(p^\star,\widehat p)\le \frac{10\log^2(2/\delta)}{n}.
\]
Otherwise, we have $\hels(p^\star,q) > \frac{2\log(2/\delta)}{n}$. In this case we want to upper bound $\widehat{\alpha}$. We have as in the proof of \Cref{lem:almostmle} for any $t > 0$,
\[
\P\left(
\frac{\prod_{i=1}^n q(X_i)}{\prod_{i=1}^n p^\star(X_i)} \ge t
\right)
\le
\frac{1}{\sqrt{t}}\bigl(1-\hels(p^\star,q)\bigr)^n
\le
\frac{1}{\sqrt{t}} \exp\bigl(-n\hels(p^\star,q)\bigr).
\]
We choose $t = \frac{2\log(2/\delta)}{n\hels(p^\star,q)}$. Note that $t<1<1/\delta$. By the above line we have
\[
\P\left(
\frac{\prod_{i=1}^n q(X_i)}{\prod_{i=1}^n p^\star(X_i)}
\ge
\frac{2\log(2/\delta)}{n\hels(p^\star,q)}
\right)
\le
\sqrt{\frac{n\hels(p^\star,q)}{2\log(2/\delta)}}\exp(-n\hels(p^\star,q))
\le
\exp(-2\log(2/\delta))
\le
\delta,
\]
where we used that $\sqrt{x}\exp(-x)$ is decreasing for $x \ge 1/2$ and $n\hels(p^\star,q) > 2\log(2/\delta) > 1/2$. Therefore, on the complementary event,
$
\widehat{\alpha}
\le
\frac{\prod_{i=1}^n q(X_i)}{\prod_{i=1}^n p^\star(X_i)}
<
\frac{2\log(2/\delta)}{n\hels(p^\star,q)},
$
and \eqref{eq:env-bayes} also holds since $t<1/\delta$. 
Hence, by \eqref{eq:hellingerbound} and the convexity of the squared Hellinger distance in its second argument, we have on this event
\[
\kl(p^\star,\widehat p)
\le
5\hels(p^\star,\widehat p)\log\left(\frac{2}{\delta}\right)
\le
5\widehat{\alpha}\hels(p^\star,q)\log\left(\frac{2}{\delta}\right)
\le
\frac{10\log^2(2/\delta)}{n}.
\]
The upper bound follows.

For the lower bound, let $\X=\{0,1\}$. Let $q$ be the point mass at $1$ (so $q(1)=1,q(0)=0$). Fix $p\in(0,1)$ such that $(1-p)^n=\delta$, and let $p^\star$ be the Bernoulli law with $p^\star(0)=p$ and $p^\star(1)=1-p$. On the event
\[
A=\{X_1=\cdots=X_n=1\},
\]
which satisfies $\P(A)=(1-p)^n=\delta$, we have
\[
\widehat\alpha=\frac{\prod_{i=1}^n q(X_i)}{\prod_{i=1}^n p^\star(X_i)+\prod_{i=1}^n q(X_i)}=\frac{1}{1+\delta},
\qquad
\widehat p(0)=(1-\widehat\alpha)p=\frac{p\delta}{1+\delta}.
\]
By the Bernoulli KL formula,
\[
\kl(p^\star,\widehat p)
=(1-p)\log\left(\frac{(1+\delta)(1-p)}{1+\delta-p\delta}\right)
+p\log\left(\frac{1+\delta}{\delta}\right).
\]
Since $1+\delta-p\delta\le 1+\delta$, the ratio $\frac{(1+\delta)(1-p)}{1+\delta-p\delta}$ lies in $[1-p,1]$. Hence
\[
(1-p)\log\left(\frac{(1+\delta)(1-p)}{1+\delta-p\delta}\right)\ge (1-p)\log(1-p)\ge -p,
\]
where the last inequality follows from $-\log(1-x)\le \frac{x}{1-x}$. Therefore, on $A$,
\begin{equation}
\label{eq:lb-core}
\kl(p^\star,\widehat p)\ge p\log\left(\frac{1+\delta}{\delta}\right)-p \ge p\log\left(\frac{1}{\delta}\right)-p.
\end{equation}
We now show that $p\ge \frac{\log(1/\delta)}{n+\log(1/\delta)}$.
Using $1-x\ge \exp\left(-\frac{x}{1-x}\right)$ for $x\in(0,1)$, we have
\[
\delta=(1-p)^n\ge \exp\left(-\frac{pn}{1-p}\right),
\]
which implies $p\ge \frac{\log(1/\delta)}{n+\log(1/\delta)}$.
Thus, if $\log(1/\delta)\le n$ then $p\ge \frac{1}{2}\frac{\log(1/\delta)}{n}$.
Since $\delta\le e^{-2}$ implies $\log(1/\delta)\ge 2$, from \eqref{eq:lb-core} we get
\[
\kl(p^\star,\widehat p)\ge p\bigl(\log(1/\delta)-1\bigr)\ge \frac{1}{2}\,p\log\left(\frac{1}{\delta}\right).
\]
Therefore, for $\delta\in(0,e^{-2}]$, it holds that
\[
\kl(p^\star,\widehat p)\ \ge\ \frac{1}{2}\,p\log\left(\frac{1}{\delta}\right)
\ \ge\ \begin{cases}
\displaystyle \frac{1}{4}\frac{\log^2(1/\delta)}{n}, & \text{if }\ \log(1/\delta)\le n,\\[1.2ex]
\displaystyle \frac{1}{4}\log\left(\frac{1}{\delta}\right), & \text{if }\ \log(1/\delta)\ge n,
\end{cases}
\]
where, in the second case, we also used $p\ge \frac{\log(1/\delta)}{n+\log(1/\delta)}\ge \frac{1}{2}$. Since $\P(A)=\delta$, these lower bounds hold with probability at least $\delta$. This completes the proof.
\end{proof}

\section{Birg\'e--Le Cam tournament}
\label{sec:blc-local-pointwise}
In this section we reproduce the details of the Birg\'e--Le Cam tournament procedure. The derivations are standard and we mainly follow the exposition in \cite[Section 32.2]{polyanskiy2025information} with the minor difference that we need to provide sharper tail bounds for our applications. For earlier results establishing this procedure we refer to \cite{lecam1973convergence, birge1983approximation, birge1985non}. For a set of densities $\mathcal P$, recall the definition \eqref{eq:localentropy} of the local Hellinger entropy.
Let $\eps_n>0$ satisfy the fixed-point inequality
\begin{equation}
\label{eq:fixedpoint}
n\eps_n^2\ge \log \nloc(\mathcal{P},\eps_n) \lor 1.
\end{equation}

\begin{lemma}[Le Cam-Birg\'e tournament]
\label{lem:lecam-birge-local}
Assume we observe an i.i.d. sample $X_1, \ldots, X_n$ distributed according to the density $p^\star$ with respect to the measure $\mu$. Let $\mathcal P$ be a known class of densities such that $p^\star \in \mathcal{P}$. 
There exists an estimator $\widehat{p}=\widehat{p}(X_1,\ldots,X_n)\in\mathcal{P}$ such that for every $\delta\in(0,1)$, with probability at least $1-\delta$,
\[
\hel(p^\star,\widehat{p})\le 21\eps_n+8\sqrt{\frac{\log(2/\delta)}{n}},
\]
where $\eps_n$ is given by \eqref{eq:fixedpoint}.
Moreover, 
\[
\E\hel(p^\star,\widehat{p}) \le 21\eps_n + \frac{11}{\sqrt{n}}.
\]
\end{lemma}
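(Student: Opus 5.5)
We will use the classical Le Cam--Birgé construction, following \cite[Section 32.2]{polyanskiy2025information}, but we will track the tail probabilities explicitly instead of only the expectation.

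\emph{Building block.} The basic ingredient is a robust pairwise test between two Hellinger balls. For densities $q_0,q_1$ with $\hel(q_0,q_1)\ge \eta$, the Le Cam--Birgé test $\psi$ satisfies
\[
\sup_{p:\hel(p,q_0)\le \eta/4}\P_p(\psi=1)\le e^{-cn\eta^2}
\quad\text{and}\quad
\sup_{p:\hel(p,q_1)\le \eta/4}\P_p(\psi=0)\le e^{-cn\eta^2}.
\]
It is obtained from the likelihood-ratio test between the convex hulls of the two small balls (minimax via the Hellinger affinity), or from the simpler $\sqrt{q_1/q_0}$ statistic with a Chernoff bound. The constant $c$ is absolute.

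\emph{Multiscale tournament.} Set $\eta_k=2^{-k}$ for $k=0,1,\dots$, down to the scale $\eps_n$. Local entropy (the definition of $\nloc$) gives nested covers: within any Hellinger ball of radius $\eta_k$ in $\mathcal P$ there is an $\eta_k/2$-net of size at most $N\triangleq\nloc(\mathcal P,\eps_n)$.

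1. Start from an arbitrary point of $\mathcal P$ at scale $\eta_0=1$.
2. At scale $k$, let $\hat p_k$ be the current center. Take an $\eta_k/2$-net of $B_{\hel}(\hat p_k,c'\eta_k)\cap\mathcal P$. By local entropy (covering a ball of radius $c'\eta_k$ at scale $\eta_k/2$ costs a constant power of $N$), its size is $N^{O(1)}$.
3. Run a round-robin tournament of pairwise tests among the net points. Select $\hat p_{k+1}$ as the point whose worst lost test, measured by the separation $\hel$ to the opponent, is smallest. This is the standard Birgé selection rule.
4. Stop at $k^\ast$ with $\eta_{k^\ast}\asymp\eps_n$. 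Then extend the same argument to the scales between $\eps_n$ and $\sqrt{\log(2/\delta)/n}$.

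\emph{Error analysis.} At each scale, the argument has two parts.

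1. Assume $p^\star\in B(\hat p_k,c'\eta_k)$. Some net point $q$ lies within $\eta_k/2$ of $p^\star$. A union bound over the $N^{O(1)}$ tests involving $q$ shows that, outside probability $N^{O(1)}e^{-cn\eta_k^2}$, the point $q$ wins every test against opponents at distance $\gtrsim\eta_k$.
2. On that event, the tournament winner is within $O(\eta_k)$ of $p^\star$. This maintains the invariant at scale $k+1$ after choosing $c'$ appropriately.

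Summing the failure probabilities over scales $\eta_k\ge\eps_n$:
\[
\sum_k \exp\bigl(O(\log N)-cn\eta_k^2\bigr).
\]
The fixed-point condition $n\eps_n^2\ge\log N$ kills the $\log N$ term once the constants are arranged so that $c n\eta_k^2\ge 2\,O(\log N)$. The resulting series is geometric in $4^k$ and is dominated by its last term.

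To get the $\log(2/\delta)$ tail, we stop the recursion instead at the scale
\[
\eta\asymp\max\Bigl(\eps_n,\sqrt{\log(2/\delta)/n}\Bigr).
\]
The total failure probability is then at most $\delta$, and the final error is $O(\eta)$, which gives the form $21\eps_n+8\sqrt{\log(2/\delta)/n}$. The stopping scale depends on $\delta$. To obtain an estimator that works for every $\delta$ simultaneously, as the statement requires, we will instead run the recursion all the way down to $\eps_n$. On the event where the tournament fails first at scale $\eta_k$, the error is $O(\eta_k)$. Hence $\P\bigl(\hel>C\eta\bigr)\le\sum_{\eta_k\le\eta}e^{-c n\eta_k^2/2}$, which is the same bound.

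\emph{Expectation bound.} This follows by integrating the tail bound: $\int_0^1 8\sqrt{\log(2/\delta)/n}\,d\delta\le 11/\sqrt n$.

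\emph{Main obstacle.} The hardest part is the bookkeeping that yields the explicit constants $21$ and $8$. This requires precise robust-test exponents (we expect $e^{-n\eta^2/8}$ type bounds) and careful geometric summation across scales. We will first fix the test exponent, then choose $c'$ and the ratio between successive scales to make the invariant close, and only then tally the constants.
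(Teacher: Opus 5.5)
\textbf{There is a genuine gap.} Your multiscale descent is a different route from the paper's. After the repairs below it does give $\hel(p^\star,\wh p)\lesssim \eps_n+\sqrt{\log(2/\delta)/n}$ with a $\delta$-free estimator. It does not, however, prove the lemma as stated. The constants $21$ and $8$, which propagate into \eqref{eq:optestimationinhellinger} and \Cref{prop:expected-recipe}, are never derived, and your scheme has losses that the paper's does not.

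Take your own parameters: nets at resolution $\eta_k/2$ (so tests can only be trusted against opponents at distance at least $2\eta_k$) and scale ratio $1/2$.
\begin{itemize}
\item The scale-$k$ winner is within $2.5\eta_k=5\eta_{k+1}$ of $p^\star$, so the search radius must be $c'\ge 5.5$.
\item After a first failure at scale $\eta_k$, the error is controlled only by $(5+c')\eta_k$ plus the drift $\sum_{j>k}c'\eta_j=c'\eta_k$ of all later centers, i.e.\ $16\eta_k$.
\item That failure has probability about $\nloc(\mathcal P,\eps_n)^{6}e^{-n\eta_k^2}$, even with the exponent of \Cref{lemma:pairwise-composite}.
\end{itemize}
This gives roughly $16\sqrt{6}\,\eps_n+16\sqrt{\log(C/\delta)/n}$, and you give no argument that retuning the ratio or resolution reaches $21$ and $8$.

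The paper avoids descent entirely. It runs one tournament on a fixed maximal $\eps_n$-packing, with tests only between points at distance at least $R=4\eps_n$. The score $T_i$ is the largest distance to an opponent that beats $p_i$. Minimizing $T_i$ gives the deterministic bound $\hel(\wh p,p^\star)\le 5\eps_n+T_1$, where $p_1$ is the packing point within $\eps_n$ of $p^\star$. It then splits $p_1$'s opponents into dyadic shells and bounds the $k$-th shell by $\nloc(\mathcal P,\eps_n)^{k+5}$ via \Cref{lemma:local-net}. This gives $\P(T_1\ge tR)\le 2e^{-n\eps_n^2t^2/4}$ for $t\ge 4$. The choice $t=\max\{4,\sqrt{4\log(2/\delta)/(n\eps_n^2)}\}$ yields exactly $21\eps_n+8\sqrt{\log(2/\delta)/n}$ for all $\delta$ at once. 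Since $p_1$ loses to $p_j$ with probability at most $e^{-n\hels(p_1,p_j)/4}$, the sub-Gaussian tail comes for free, with no stopping rule and no drift.

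\textbf{Two further steps need repair even for the order of magnitude.}

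First, your bound $\P(\hel(p^\star,\wh p)>C\eta)\le\sum_{\eta_k\le\eta}e^{-cn\eta_k^2/2}$ has the range reversed. A first failure at a fine scale $\eta_k\le\eta$ only costs $O(\eta)$, so the bad events are failures at coarse scales $\eta_k>\eta$. That sum is geometric and dominated by its first term; the sum over fine scales is not small.

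Second, you take a net of $B_{\hel}(\wh p_k,c'\eta_k)\cap\mathcal P$ and union-bound over the tests involving the good point $q$. But $\wh p_k$ is computed from the same sample, so $q$ and its opponents are data-dependent, whereas \Cref{lemma:pairwise-composite} controls fixed pairs only. To fix this, choose in advance a maximal $\eta_k/2$-packing of $\mathcal P$ for each scale. Run the scale-$k$ tournament among its points in the current ball, and union-bound over the deterministic set of packing points within $O(\eta_k)$ of $p^\star$; \Cref{lemma:local-net} controls the size of that set.

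The expectation step is fine: integrating the $\delta$-uniform quantile bound gives $\E\hel(p^\star,\wh p)\le 21\eps_n+\frac{8}{\sqrt n}\int_0^1\sqrt{\log(2/\delta)}\,d\delta\le 21\eps_n+\frac{10.1}{\sqrt n}$.
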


We now present a construction of the estimator as well as the construction of the convex set containing $p^\star$, which will be used in our analysis.
\begin{framed}
\begin{enumerate}
\item Take a maximal $\eps$-packing $\{p_1,\ldots,p_N\}\subset\mathcal{P}$ in Hellinger distance $\hel$.
\item For each pair $i\ne j$ with $\hel(p_i,p_j)\ge R$ where $R=4\eps$, run the composite test $\psi_{ij}$ of Lemma~\ref{lemma:pairwise-composite} between the hypotheses $B_{\hel}(p_i,\eps)$ and $B_{\hel}(p_j,\eps)$ and set $\psi_{ji}=1-\psi_{ij}$.
\item Define tournament scores
\[
T_i=
\begin{cases}
\max\limits_{j:\ \hel(p_i,p_j)\ge R,\ \psi_{ij}=1}\ \hel(p_i,p_j), & \text{if such } j \text{ exists},\\
0, & \text{otherwise.}
\end{cases}
\]
\item Let $i^\star\in\arg\min_i T_i$ (ties arbitrary) and output $\widehat{p}=p_{i^\star}$. Namely, we choose the density for which no $R$-distant center \say{wins the match} defined by the composite test $\psi_{ij}$.
\item For any $\delta\in(0,1)$ define the set
\[
\wh \Qs_1=\left\{q\in\mathcal{P}: \hel(q,\widehat{p})\le 21\eps_n+8\sqrt{\frac{\log(2/\delta)}{n}}\right\}.
\]
Note that the map $q\mapsto \hels(q,\widehat{p})$ is convex, so $\wh \Qs_1$ is convex (whenever $\mathcal{P}$ is also convex), and by \cref{lem:lecam-birge-local} it contains $p^\star$ with probability at least $1-\delta$.
\end{enumerate}
\end{framed}
We first recall the following result on hypotheses testing between Hellinger balls, due to Le Cam~\cite{lecam1973convergence} and Birgé~\cite{birge1983approximation}.
\begin{lemma}[Pairwise composite testing; equation (32.33) of \cite{polyanskiy2025information}]
\label{lemma:pairwise-composite}
Let $\eps>0$ and $R=4\eps$. For any $i\ne j$ with $\hel(p_i,p_j)\ge R$, there exists a test $\psi_{ij}(X_1,\ldots,X_n)\in\{0,1\}$ such that
the outcome $0$ means $p\in B_{\hel}(p_i,\eps)$, the outcome $1$ means $p\in B_{\hel}(p_j,\eps)$, and $\psi_{ji}=1-\psi_{ij}$.
Moreover,
\[
\sup_{p\in B_{\hel}(p_i,\eps)}\P_p[\psi_{ij}(X_1, \ldots, X_n)=1]\le \exp\Big(-\frac{n}{4}\hels(p_i,p_j)\Big).
\]
\end{lemma}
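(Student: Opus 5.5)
The plan is to construct the test from the Hellinger-closest pair between the two balls and to bound its error by a Markov (Chernoff-at-$1/2$) argument, exactly as in the proof of Lemma~\ref{lem:almostmle}. Write $A=B_{\hel}(p_i,\eps)$, $B=B_{\hel}(p_j,\eps)$, and let $\rho(p,q)=\int\sqrt{pq}\,\di\mu=1-\hels(p,q)$ be the affinity.

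\emph{Step 1: the two balls are far apart.} By the triangle inequality for $\hel$ and $\hel(p_i,p_j)\ge R=4\eps$, every $p\in A$ and $q\in B$ satisfy
\[
\hel(p,q)\ge \hel(p_i,p_j)-2\eps\ge \tfrac12\hel(p_i,p_j).
\]
Hence $\rho(p,q)\le 1-\tfrac14\hels(p_i,p_j)$. Also, $A$ and $B$ are convex, because $q\mapsto\hels(p_i,q)$ is convex.

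\emph{Step 2: choose a least favourable pair.} I would pick $(p_0,q_0)\in A\times B$ maximizing $\rho$, equivalently minimizing $\hel$. If the supremum is not attained, I would work with an approximate maximizer and pass to a limit at the end; this is routine once the inequality below is proved with slack.

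Define the test
\[
\psi_{ij}=\ind{\textstyle\prod_{t=1}^n \sqrt{q_0(X_t)/p_0(X_t)}\ge 1},
\]
and set $\psi_{ji}=1-\psi_{ij}$. For $p\in A$, Markov's inequality and independence give
\[
\P_p(\psi_{ij}=1)\le \Bigl(\int p\sqrt{q_0/p_0}\,\di\mu\Bigr)^n .
\]

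\emph{Step 3 (the main obstacle): the first-order optimality inequality.} I need
\[
\sup_{p\in A}\int p\sqrt{q_0/p_0}\,\di\mu\le \rho(p_0,q_0).
\]
This is Birgé's/Le Cam's lemma. The plan is to perturb $p_t=(1-t)p_0+tp\in A$ and use maximality: $\frac{d}{dt}\rho(p_t,q_0)|_{t=0^+}\le 0$. Since $\sqrt{\cdot}$ is concave, the difference quotients of $\int\sqrt{p_tq_0}$ are monotone, so monotone convergence justifies differentiating under the integral. The derivative equals $\tfrac12\int (p-p_0)\sqrt{q_0/p_0}$, and $\int p_0\sqrt{q_0/p_0}=\rho(p_0,q_0)$, which gives the claimed inequality.

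Care is needed on the set $\{p_0=0\}$. If $p$ charges a region where $p_0=0$ but $q_0>0$, the derivative is $+\infty$, contradicting optimality. So that region contributes nothing, and the convention $q_0/p_0=+\infty$ there is harmless.

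\emph{Conclusion.} Combining the steps,
\[
\P_p(\psi_{ij}=1)\le \rho(p_0,q_0)^n\le\bigl(1-\tfrac14\hels(p_i,p_j)\bigr)^n\le \exp\bigl(-\tfrac n4\hels(p_i,p_j)\bigr),
\]
using $1-u\le e^{-u}$. By the symmetric argument, with the roles of $p_0$ and $q_0$ exchanged and the event $\prod\sqrt{p_0/q_0}>1$, the same bound holds under $B$ for $\psi_{ji}$.
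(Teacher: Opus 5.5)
The least-favourable-pair route differs from the standard proof, and Steps~1 and~3 are correct once a maximizing pair exists. However, your final ``symmetric argument'' fails on the set $Z=\{p_0=q_0=0\}$. You never treat this set, and both hypotheses can charge it.

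Take $\X=\{1,2,3\}$ with counting measure, $p_i=\delta_1$, $p_j=\delta_2$ (so $\hels(p_i,p_j)=1$), $\eps=1/4$ and $c=1-\eps^2$. Then $A=\{p:\sqrt{p(1)}\ge c\}$ and $B=\{q:\sqrt{q(2)}\ge c\}$. The unique maximizer of $\rho$ is $p_0=(c^2,1-c^2,0)$, $q_0=(1-c^2,c^2,0)$, yet $(c^2,0,1-c^2)\in A$ and $(0,c^2,1-c^2)\in B$.

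Suppose a sample hitting $Z=\{3\}$ is assigned to one side, by reading $\sqrt{0/0}$ as $0$ (resp.\ $+\infty$). Then under $(0,c^2,1-c^2)$ (resp.\ $(c^2,0,1-c^2)$) the error is at least $1-c^{2n}$, which exceeds $e^{-n/4}$ for every $n\ge 4$.

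Suppose instead such observations are ignored. Then the Markov bound under $p\in A$ needs $\int_{\{p_0>0\}}p\sqrt{q_0/p_0}\,\di\mu+p(Z)\le\rho(p_0,q_0)$. Step~3 cannot supply the term $p(Z)$, because the perturbation $p_t=(1-t)p_0+tp$ does not see $Z$ (there $q_0=0$).

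What is missing is the structure of the Hellinger-ball constraint. Since $p_i$ is a Slater point, there is $\lambda\ge0$ such that $p_0$ maximizes $\rho(p,q_0)+\lambda\rho(p,p_i)$ over all densities. By Cauchy--Schwarz, $\sqrt{p_0}=(\sqrt{q_0}+\lambda\sqrt{p_i})/\nu$ with $\nu=\|\sqrt{q_0}+\lambda\sqrt{p_i}\|_{L^2(\mu)}\ge1$. Here $\lambda>0$ (otherwise $\rho(p_0,q_0)=1$), so $\int\sqrt{p_0p_i}\,\di\mu=c$, $\nu=\rho(p_0,q_0)+\lambda c$, and $\{p_0=0\}=Z\subseteq\{p_i=0\}$. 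Write $\sqrt{q_0/p_0}=\nu-\lambda\sqrt{p_i/p_0}$ on $\{p_0>0\}$, and use $p\sqrt{p_i/p_0}\ge 2\sqrt{pp_i}-\sqrt{p_0p_i}$ together with $\nu\ge1$. This gives the required bound with weight $1$ on $Z$, and symmetrically under $B$.

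Second, existence of the maximizing pair is not routine, and your fallback does not work. At an approximate maximizer, concavity of $t\mapsto\rho(p_t,q_0)$ only says that difference quotients lie below the one-sided derivative. So near-optimality gives no upper bound on $\int p\sqrt{q_0/p_0}\,\di\mu$; this can even be $+\infty$ if $p_0$ vanishes where $q_0$ and some $p\in A$ do not. You also give no mechanism for passing to a limit of the corresponding tests.

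Attainment does hold. Take weak $L^2(\mu)$ limits $f,g$ of $\sqrt{p_k},\sqrt{q_k}$ along a maximizing sequence; the constraints $\langle f,\sqrt{p_i}\rangle\ge c$ and $\langle g,\sqrt{p_j}\rangle\ge c$ pass to the limit. Then $\limsup_k\rho(p_k,q_k)\le\langle f,g\rangle+\sqrt{(1-\|f\|^2)(1-\|g\|^2)}$. For any density $w$, the densities $f^2+(1-\|f\|^2)w\in A$ and $g^2+(1-\|g\|^2)w\in B$ attain this value, by $\sqrt{(x+y)(u+v)}\ge\sqrt{xu}+\sqrt{yv}$.

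With these two repairs your approach gives an explicit deterministic likelihood-ratio test. The paper itself gives no proof and cites Polyanskiy--Wu. The Le Cam--Birg\'e argument there, as I recall it, obtains the composite test from a minimax argument combined with tensorization of the Hellinger affinity over convex hulls. That route sidesteps both the existence question and the null-set issue.
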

We also need the following result.

\begin{lemma}[Lemma 32.11 of \cite{polyanskiy2025information}]\label{lemma:local-net}
        For any $p \in \mathcal{P}$, $\eta \ge \eps$, and an integer $k \ge 0$,
        \begin{equation*}
            N_{\hel}(B_{\hel}(p,2^k \eta) \cap \mathcal{P}, \eta/2) \le \nloc(\mathcal{P},\eps)^{k+1}.
        \end{equation*}
\end{lemma}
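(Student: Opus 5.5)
We prove the statement, Lemma~32.11 of \cite{polyanskiy2025information}, by induction on $k$, repeatedly applying the definition \eqref{eq:localentropy} of $\nloc(\mathcal{P},\eps)$ at dyadically decreasing radii.

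\emph{Base case $k=0$.} Here the claim reads $N_{\hel}(B_{\hel}(p,\eta)\cap\mathcal P,\eta/2)\le \nloc(\mathcal P,\eps)$. This is immediate from the definition, since $p\in\mathcal P$ and $\eta\ge\eps$ are admissible in the two suprema of \eqref{eq:localentropy}.

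\emph{Inductive step.} Assume the bound holds for $k-1$ and every $\eta\ge\eps$, and fix $\eta\ge\eps$. Set $\eta'=2\eta\ge\eps$, so that $2^k\eta=2^{k-1}\eta'$. The induction hypothesis applied with $\eta'$ covers $B_{\hel}(p,2^k\eta)\cap\mathcal P$ by at most $\nloc^{k}$ Hellinger balls of radius $\eta'/2=\eta$. Their centers $c_1,\dots,c_m$ may lie outside $\mathcal P$; we handle this in the next paragraph. Each piece $B_{\hel}(c_i,\eta)\cap B_{\hel}(p,2^k\eta)\cap\mathcal P$ must then be covered by $\nloc$ balls of radius $\eta/2$. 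Multiplying the counts gives $\nloc^{k}\cdot\nloc=\nloc^{k+1}$.

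\emph{Main obstacle: centers outside $\mathcal P$.} The definition of $\nloc$ only controls balls centered at points of $\mathcal P$. The simplest fix is to read $N_{\hel}$ as an internal covering number (centers in the covered set), as in the standard treatment. If the definition allows external centers, we use the usual reduction instead: in each nonempty piece pick any $q_i\in\mathcal P$. Then $B_{\hel}(c_i,\eta)\subseteq B_{\hel}(q_i,2\eta)$, which only gives a ball of radius $2\eta$ and loses a factor of two in scale. To avoid this loss we run the induction with internal coverings throughout. Concretely, we strengthen the hypothesis to: there are at most $\nloc^{k}$ points of $B_{\hel}(p,2^k\eta)\cap\mathcal P$ whose $\eta$-balls cover it. Because a covering number is at most the size of a maximal packing, and a maximal $r$-packing consists of points of the set and is an internal $r$-cover, every cover we use can be taken to have centers in $\mathcal P$.

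Finally, the case $k=0$ with internal covers is obtained from the same definition. If necessary we pass between packing and covering numbers at the same scale. This does not change the count as long as the paper's convention for $N_{\hel}$ is used consistently (and can be stated as internal covering, which is what the proof of Theorem~\ref{thm:local-entropy-mixtures} actually bounds, since it bounds a maximal packing). This completes the induction.
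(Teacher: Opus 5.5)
Your argument is essentially the standard proof. The paper does not reprove this lemma; it cites Polyanskiy--Wu, whose proof is the same induction on $k$. You apply the induction hypothesis at scale $2\eta$ and then refine each ball once via the definition of $\nloc$. The usual write-up instead applies the definition at the top scale $2^k\eta$ and then the induction hypothesis to each resulting ball of radius $2^{k-1}\eta$. The order is immaterial, and in both versions the intermediate centers must lie in $\mathcal P$, as you rightly point out.

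Two details in how you handle the centers need correcting.

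\emph{The invariant.} The invariant to carry is ``centers in $\mathcal P$'', not ``centers in $B_{\hel}(p,2^k\eta)\cap\mathcal P$''. The refining centers supplied by the definition of $\nloc$ lie in $B_{\hel}(c_i,\eta)\cap\mathcal P$ and may fall outside $B_{\hel}(p,2^k\eta)$. So your stronger invariant is not reproduced at the next step. The weaker one is reproduced, and it suffices both for the recursion and for how the lemma is used in the tournament analysis.

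\emph{Packing versus covering.} Covering and packing numbers cannot be exchanged at the same scale. Write $N(A,r)$ for the covering number and $D(A,r)$ for the size of a largest $r$-separated subset of $A$. A maximal $r$-separated set is an internal $r$-cover, but it can be much larger than a minimal $r$-cover. Only $N(A,r)\le D(A,r)$ holds in general; the reverse comparison costs a factor $2$ in the scale, and internal and external covering numbers at the same radius can differ. Hence ``centers in $\mathcal P$'' must be imposed as the convention in \eqref{eq:localentropy} rather than derived. This is harmless here, since the proof of \Cref{thm:local-entropy-mixtures} bounds a maximal packing, i.e.\ an internal cover.
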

\paragraph{Proof of \Cref{lem:lecam-birge-local}.}
Set $\eps=\eps_n$ and $R=4\eps$. Let $\{p_1,\ldots,p_N\}$ be a maximal $\eps$-packing. There is an index, which we rename as $1$, such that $\hel(p^\star,p_1)\le \eps$. We run the tests, compute $T_i$, and choose $\widehat{p}=p_{i^\star}$ as above. By the triangle inequality and the definition of $i^\star$,
\[
\hel(\widehat{p},p^\star)\le \hel(\widehat{p},p_1)+\hel(p_1,p^\star)\le \max\{R,T_1\}+\eps\le 5\eps+T_1.
\]
Hence, for any $t\ge 4$,
\[
\P\big(\hel(\widehat{p},p^\star)>5\eps+4t\eps\big)\le \P(T_1\ge tR).
\]

Define the Hellinger shells $A_k=\{p\in\mathcal{P}: 2^kR\le \hel(p_1,p)<2^{k+1}R\}$ and set $G_k=\{p_1,\ldots,p_N\}\cap A_k$. By a union bound over $G_k$ and Lemma~\ref{lemma:pairwise-composite},
\[
\P(T_1\in[2^kR,2^{k+1}R))
\le \sum_{j:\ p_j\in G_k}\exp\Big(-\frac{n}{4}\hels(p_1,p_j)\Big)
\le |G_k|\exp\Big(-\frac{n}{4}(2^kR)^2\Big).
\]
Since $\hel(p_1,p^\star)\le \eps$, we have
\[
B_{\hel}(p_1,2^{k+1}R)\subset B_{\hel}(p^\star,2^{k+1}R+\eps)\subset B_{\hel}(p^\star,2^{k+4}\eps),
\]
and by Lemma~\ref{lemma:local-net},
\[
|G_k|\le N_{\hel}\big(B_{\hel}(p_1,2^{k+1}R)\cap\mathcal{P},\ \eps/2\big)\le \nloc(\mathcal{P},\eps)^{k+5}.
\]
Using the fixed point,
\[
\P(T_1\ge tR)\le \sum_{k\ge \lfloor\log_2 t\rfloor}\exp\big(n\eps^2(k+5)-4n\eps^2\,4^k\big)\le \sum_{k\ge \lfloor\log_2 t\rfloor}\exp\big(-n\eps^2\,4^k\big).
\]
Since $2^{\lfloor\log_2 t\rfloor}\le t<2^{\lfloor\log_2 t\rfloor+1}$ implies $4^{\lfloor\log_2 t\rfloor}\ge t^2/4$, we have
$
\P(T_1\ge tR)\le 2\,\exp\Big(-\frac{n\eps^2\,t^2}{4}\Big).
$
We choose
$
t=\max\Big\{4,\sqrt{ \frac{4}{n\eps^2}\log\frac{2}{\delta}}\ \Big\},
$
which implies $2\exp(-n\eps^2 t^2/4)\le \delta$. Therefore, with probability at least $1-\delta$,
\[
\hel(\widehat{p},p^\star)\le 5\eps+4t\eps\le 21\eps+8\sqrt{\frac{\log(2/\delta)}{n}}.
\]
This gives the claim. Finally, for any $s\ge 0$, choosing $\delta(s)=\min\left\{1, 2 \exp(-n s^{2}/64)\right\}$ gives
\[
\P\left(\hel(p^\star,\hat p) > 21\eps_n + s\right)\le \min\left\{1, 2 \exp(-n s^{2}/64)\right\}.
\]
Let $s_0=8\sqrt{\log 2 / n}$. Integrating the tail, we have
\[
\E\hel(p^\star,\widehat{p})
\le 21\eps_n + \int_{0}^{s_0} 1\,ds + \int_{s_0}^{\infty} 2 \exp(-n s^{2}/64)ds
\le 21\eps_n + \frac{11}{\sqrt{n}}.
\]
The second claim follows. \qed

\section{Other deferred proofs}
\label{sec:remainingproofs}
\subsection[Proof of Lemma~\ref{lem:kltohellinger}]{Proof of \Cref{lem:kltohellinger}}

Let $h(x)={p(x)}/{q(x)}$. By the standard representation we have
\[
\kl(p,q)=\int p\log\frac{p}{q}\,d\mu
=\int q\bigl(h\log h-h+1\bigr)\,d\mu.
\]
We invoke the elementary pointwise inequality, valid for all $h>0$,
\begin{equation}\label{eq:calc-ineq}
h\log h-h+1 \le \frac{1}{(\sqrt{e}-1)^2}(\sqrt{h}-1)^2 \max\{1,\log h\}.
\end{equation}
The inequality \eqref{eq:calc-ineq} can be shown by treating $h\le e$ and $h> e$ separately and checking that
$\frac{h\log h-h+1}{(\sqrt{h}-1)^2}$ is increasing on $(0,e]$ while
$\frac{h\log h-h+1}{(\sqrt{h}-1)^2\log h}$ is decreasing on $[e,\infty)$; both meet at value $(\sqrt e-1)^{-2}$ when $h=e$. Applying \eqref{eq:calc-ineq} with $h=p/q$ and $m=\max\left\{1,\ \sup_{x\in\X}\log\left(\frac{p(x)}{q(x)}\right)\right\}$,
we obtain
\[
\kl(p,q) \le \frac{m}{(\sqrt{e}-1)^2}\int q(\sqrt{h}-1)^2d\mu.
\]
Finally, observe that $\int q(\sqrt{h}-1)^2\,d\mu=\int(\sqrt p-\sqrt q)^2d\mu=2\hels(p,q)$. Substituting yields the claim. \qed

\subsection[Proof of Corollary~\ref{cor:eps-pareto}]{Proof of \Cref{cor:eps-pareto}}

We start with a general amplification lemma that converts a constant-factor ratio cover into a $(1+\eps)$-ratio cover.

\begin{claim}\label{claim:amplify-ratio}
For $0<\eps\le 1$ and $C \ge 1$, let $B \subset K$ be a $C$-ratio cover of a convex and compact set $K \subset \R_+^d$.
Then there exists a subset $A \subset K$ with
\[
|A|\le|B|\left(3 + \frac{4\log_2(4C/\eps)}{\eps}\right)^{d-1}
\]
that is a $(1+\eps)$-ratio cover of $K$.
\end{claim}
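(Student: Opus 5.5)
The plan is to refine each element of $B$ separately, using the convexity trick of Warm-up~2 and the dyadic-scale discretization of Papadimitriou--Yannakakis, while exploiting the fact that $\phi \in B$ already pins each coordinate within a factor $C$.

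\textbf{Step 1: localize at one element of $B$.} Fix $\phi\in B$ and let
\[
K_\phi=\{\theta\in K:\ \theta_j\le C\phi_j \text{ for all } j\},
\]
so that $K=\bigcup_{\phi\in B}K_\phi$. It suffices to build, for each $\phi$, a set $A_\phi\subset K$ of size at most $(3+4\log_2(4C/\eps)/\eps)^{d-1}$ that $(1+\eps)$-covers $K_\phi$, and then take $A=\bigcup_{\phi\in B} A_\phi$.

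\textbf{Step 2: mix with a small weight on $\phi$.} Set $\lambda=\eps/4$; we aim to use points of the form $\psi=(1-\lambda)\theta'+\lambda\phi\in K$ with $\theta'\in K_\phi$. Coordinates where $\theta_j\le \frac{\eps}{4C}\cdot C\phi_j$ are cheap. Indeed, if $\theta_j\le (\eps/4)\phi_j=\lambda\phi_j$, then $\theta_j\le\psi_j$ automatically. So coordinate $j$ needs real work only when $\theta_j\in(\lambda\phi_j, C\phi_j]$. This is a range of ratio $4C/\eps$ that is independent of $K$, which is the point of starting from $B$.

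\textbf{Step 3: discretize on a $(1+\eps/4)$-geometric grid.} Bin each such coordinate on a $(1+\eps/4)$-geometric grid inside $(\lambda\phi_j,C\phi_j]$; there are about $\log(4C/\eps)/\log(1+\eps/4)\lesssim 4\log_2(4C/\eps)/\eps$ bins, plus one extra ``small'' bin. For each pattern that occurs, pick a representative $\theta'\in K_\phi$ that matches $\theta$ up to a factor $1+\eps/4$ on the non-small coordinates. Then $\theta_j\le(1+\eps/4)\theta'_j\le \frac{1+\eps/4}{1-\eps/4}\psi_j\le(1+\eps)\psi_j$ for $\eps\le1$. Doing this naively gives exponent $d$ rather than $d-1$.

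\textbf{Step 4: remove one coordinate (the main obstacle).} To obtain exponent $d-1$, follow Papadimitriou--Yannakakis. Discretize only $d-1$ coordinates, and within each cell choose as representative the element of $K_\phi$ maximizing the last coordinate. Compactness guarantees this maximizer exists, and its last coordinate dominates $\theta_d$ exactly, so no bin is needed for the last coordinate. The ``$3+$'' in the count absorbs the small bin together with boundary and rounding bins. The constant bookkeeping, namely checking that the mixture with $\phi$ keeps the ratio at most $1+\eps$ while the bin count stays at most $3+4\log_2(4C/\eps)/\eps$ (using $\log_2(1+x)\ge x$ for $x\in[0,1]$ to convert $\log$ base $1+\eps/4$), is routine but the most delicate step. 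If the factor-$4$ margin turns out to be too tight, I would shrink $\lambda$ to $\eps/8$, which only changes the constant inside the logarithm.
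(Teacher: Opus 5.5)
Your proposal is correct and follows essentially the same route as the paper: for each $b\in B$ the paper uses points $(\eps/4)b+(1-\eps/4)w$, bins the first $d-1$ coordinates of $\theta$ into a ``small'' case $\theta_i\le(\eps/4)b_i$ plus a $(1+\eps/4)$-geometric grid above $(\eps/4)b_i$, chooses $w$ to maximize the $d$-th coordinate within each pattern, and does the same bookkeeping via $\log_2(1+x)\ge x$. Two small remarks. First, the margin $\lambda=\eps/4$ already suffices, because $(1+\eps/4)/(1-\eps/4)\le 1+\eps$ for $\eps\le 1$, so there is no need to fall back to $\eps/8$. Second, you should use closed bin constraints, as the paper does, rather than half-open intervals, so that the maximizer of the last coordinate over each cell exists by compactness.
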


\begin{proof}
Fix $b\in B$. We will add to $A$ several points of the form
\[
z  = (\eps/4)b + (1-\eps/4)w,
\]
where $w\in K$ is chosen so that its first $d-1$ coordinates fall into prescribed multiplicative bins relative to $b$.

We now describe this discretization. Fix $\theta\in K$, and choose $b\in B$ that $C$-ratio covers $\theta$ (if there are several, choose one arbitrarily).
Then for each coordinate $i\in[d]$ we have $\theta_i \le C b_i$, and hence
\begin{equation}
\label{eq:b-bound}
\theta_i \le (\eps/4)b_i \cdot \frac{4C}{\eps}.
\end{equation}
We split the possible values of $\theta_i$ into the following cases relative to $b_i$:
\begin{itemize}[leftmargin=2em]
\item \emph{Case 1:} $\theta_i \le (\eps/4)\,b_i$;
\item \emph{Case 2:} $\theta_i > (\eps/4)\,b_i$, in which case \eqref{eq:b-bound} implies that
\[
\theta_i \in \Bigl[(\eps/4)\,b_i\,(1+\eps/4)^j,\ (\eps/4)\,b_i\,(1+\eps/4)^{j+1}\Bigr]
\quad\text{for some } j \in \Bigl\{0,\dots,\Bigl\lceil \log_{1+\eps/4}\Bigl(\frac{4C}{\eps}\Bigr)\Bigr\rceil\Bigr\}.
\]
\end{itemize}

We now define the set $A$.
For each $b\in B$, and for each possible choice of cases/bins for the first $d-1$ coordinates, we include one point
\[
z = (\eps/4)\,b + (1-\eps/4)\,w,
\]
where $w\in K$ has the specified cases/bins on coordinates $1,\dots,d-1$ (if such a $w$ exists).
If such a $w$ exists, we choose $w$ to maximize the $d$-th coordinate among all points in $K$ with this bin pattern.
This maximizer exists since $K$ is compact and the bin constraints above are closed.
There are at most $\bigl(2+\lceil \log_{1+\eps/4}(4C/\eps)\rceil\bigr)^{d-1}$ bin patterns for the first $d-1$ coordinates, so
\begin{align*}
|A|
&\le |B|\Bigl(2+\Bigl\lceil \log_{1+\eps/4}\Bigl(\frac{4C}{\eps}\Bigr)\Bigr\rceil\Bigr)^{d-1}
\le |B|\Bigl(3+\frac{\log_2(4C/\eps)}{\log_2(1+\eps/4)}\Bigr)^{d-1}\\
&\le |B|\Bigl(3+\frac{4\log_2(4C/\eps)}{\eps}\Bigr)^{d-1},
\end{align*}
where in the last step we used $\log_2(1+x)\ge x$ for $0\le x\le 1$ and $\eps/4\le 1/4$.

It remains to show that $A$ is a $(1+\eps)$-ratio cover of $K$.
Fix $\theta\in K$, and let $b\in B$ be a $C$-ratio cover of $\theta$.
By construction of $A$, there exists $z=(\eps/4)\,b+(1-\eps/4)\,w\in A$ such that $w$ has the same bin pattern as $\theta$ on the first $d-1$ coordinates, and in addition $\theta_d \le w_d$ (since $w$ maximizes the $d$-th coordinate within that pattern).
We check coordinatewise domination.

For $i\in[d-1]$, if $\theta_i$ falls in Case~1 then $\theta_i \le (\eps/4)\,b_i \le z_i$.
Otherwise $\theta_i$ is in Case~2, and $\theta_i$ and $w_i$ lie in the same interval $[L,U]$ with $U=(1+\eps/4)L$ and $w_i\ge L$, so $\theta_i \le U \le (1+\eps/4)\,w_i$.
Using $z_i \ge (1-\eps/4)\,w_i$,
\[
\theta_i \le (1+\eps/4)\,w_i \le (1+\eps/4)\,\frac{z_i}{1-\eps/4}
\le (1+\eps/4)(1+\eps/2)\,z_i \le (1+\eps)\,z_i,
\]
where we used $1/(1-x)\le 1+2x$ for $x\in[0,1/2]$ (here $x=\eps/4$) and the fact that $(1+\eps/4)(1+\eps/2)\le 1+\eps$ for $0<\eps\le 1$.
Finally, for the last coordinate, $\theta_d \le w_d \le z_d/(1-\eps/4)\le (1+\eps)\,z_d$.
Thus $\theta_i \le (1+\eps)z_i$ holds for all $i\in[d]$, proving that $A$ is a $(1+\eps)$-ratio cover.
\end{proof}

To conclude the theorem, apply \Cref{claim:amplify-ratio} with $K$ as given, $C=32$, and $B$ a $32$-ratio cover of $K$ of size at most $2^{8d}$ provided by \Cref{thm:ratio-cover}.
This yields an $\eps$-approximate Pareto curve of size
\[
|A|
 \le 2^{8d}\,\Bigl(3 + \frac{4\log_2(4\cdot 32/\eps)}{\eps}\Bigr)^{d-1}
 = 2^{8d}\,\Bigl(3 + \frac{4\log_2(128/\eps)}{\eps}\Bigr)^{d-1}.
\]
The claim follows. \qed

\subsection[Proof of Lemma~\ref{lemma:kl-from-ball}]{Proof of \Cref{lemma:kl-from-ball}}

Fix $q\in\mathcal P$ and set $\Delta\triangleq 1\vee \hels(p,q)/\eps$ and $\lambda\triangleq 1/\Delta$.
Define $q_\lambda\triangleq (1-\lambda)p+\lambda q\in\mathcal P$.
By convexity of $\hels(p,\cdot)$ we have
\[
\hels(p,q_\lambda)=\hels\bigl(p,(1-\lambda)p+\lambda q\bigr)\le \lambda\,\hels(p,q)\le \eps,
\]
so $q_\lambda\in\mathcal Q$. Pick $\tilde q\in S$ such that $q_\lambda\le \alpha\,\tilde q$ pointwise.
Since $q_\lambda\ge \lambda q$, it follows that $q\le \alpha\Delta\,\tilde q$ and hence
\[
\sup_{x\in\X}\log\Bigl(\frac{q(x)}{\tilde q(x)}\Bigr)\le \log(\alpha\Delta).
\]
Also $\tilde q\in\mathcal Q$, so $\hels(p,\tilde q)\le \eps$, and by the triangle inequality, it holds that 
\[
\hels(q,\tilde q)=\hel(q,\tilde q)^2
\le \bigl(\hel(q,p)+\hel(p,\tilde q)\bigr)^2
\le 2\hels(p,q)+2\hels(p,\tilde q)
\le 2\bigl(\hels(p,q)+\eps\bigr).
\]
Applying Lemma~\ref{lem:kltohellinger} gives
\begin{align*}
\kl(q,\tilde q)
&\le
\frac{2}{(\sqrt e-1)^2}\hels(q,\tilde q)\,
\max\left\{1,\sup_{x\in\X}\log\Bigl(\frac{q(x)}{\tilde q(x)}\Bigr)\right\}
\\
&\le
\frac{4}{(\sqrt e-1)^2}(\hels(p,q)+\eps)\max\{1,\log(\alpha\Delta)\}.
\end{align*}
Since $\alpha\ge 2$ and $\Delta\ge 1$, we have $\log(\alpha\Delta)\ge \log 2$, so $\max\{1,\log(\alpha\Delta)\}\le (\log(\alpha\Delta))/\log 2$.
Using $\frac{4}{(\sqrt e-1)^2\log 2}<30$, we obtain
\[
\kl(q,\tilde q)\le 30(\hels(p,q)+\eps)\log(\alpha\Delta).
\]
Finally, $\log(\alpha\Delta)=\max\left\{\log(\alpha),\ \log\left(\alpha\hels(p,q)/\eps\right)\right\}$ by definition of $\Delta$.
Taking the minimum over $\tilde q\in S$ yields the claim.\qed

\subsection[Proof of Proposition~\ref{prop:misspecmodelaggregation}]{Proof of \Cref{prop:misspecmodelaggregation}}
First, we present the required estimator, which is simpler than the one we used in \Cref{thm:modelaggregation}.
\begin{framed}
\begin{itemize}
\item Construct a class of densities 
\begin{equation}
\label{eq:newwprimeset}
\mathcal{Q}
=
\left\{
\left(1 - \frac{1}{n}\right)p
+ \frac{1}{nM}\sum_{q \in \mathcal{P}} q
:\ p \in \mathcal{P}
\right\}.
\end{equation}
\item Run the density estimator of \Cref{lem:onlinetobatch} over $\mathcal{Q}$ to output the final density $\widehat{p}$.
\end{itemize}
\end{framed}

Let $p^\star_{\mathcal{P}}$ be any minimizer of $\kl(p^\star, p)$ over $p \in \mathcal{P}$ and let
\[
\widetilde{p}
=
\left(1 - \frac{1}{n}\right)p^\star_{\mathcal{P}}
+ \frac{1}{nM}\sum_{q \in \mathcal{P}} q.
\]
Observe that $\widetilde{p} \in \Qs$ and that for each $p, q \in \Qs$ we have
\[
\sup_{p,q \in \Qs}\sup_{x\in\X}\log\left(\frac{p(x)}{q(x)}\right)
\le \log\bigl(1 + nM\bigr)
\le \log(2nM).
\]
Therefore, by \Cref{lem:onlinetobatch} we have, with probability at least $1 - \delta$,
\[
-\E\log(\widehat{p}(X)) + \E\log(\widetilde{p}(X))
\le
\frac{2\log(M) + \frac{25}{4}\left(e-2\right)\left(\log(2nM)+1\right)\log\left(\frac{1}{\delta}\right)}{n}.
\]
Finally, we show that
\[
\E\log\bigl(p^\star_{\mathcal{P}}(X)\bigr)
\le
\E\log\bigl(\widetilde{p}(X)\bigr)
+ \frac{2}{n},
\]
which, by combining this with the previous inequality and simplifying the constants, immediately proves the claim. Indeed, by the definition of $\widetilde{p}$ we have, for all $x\in\X$,
\begin{equation}
\label{eq:logratioofdens}
\log\left(\frac{p^\star_{\mathcal{P}}(x)}{\widetilde{p}(x)}\right)
\le \log\left(\frac{1}{1 - 1/n}\right)
\le \frac{2}{n},
\end{equation}
whenever $n \ge 2$. When $n = 1$, we have $\mathcal{Q} = \big\{\frac{1}{\M}\sum\limits_{q \in \mathcal P}q\big\}$ and the claim follows since $\widehat{p} = \frac{1}{\M}\sum\limits_{q \in \mathcal P}q$.\qed

\subsection[Proof of Proposition~\ref{prop:misspecconvexaggregation}]{Proof of \Cref{prop:misspecconvexaggregation}}

We assume throughout that $n\ge 2$. When $n=1$, $\Qs$ contains a single element, namely
$
\Qs=\left\{\frac{1}{\M}\sum_{q\in\mathcal P}q\right\},
$
so $\widehat p$ is this deterministic density and the stated bound is immediate.

The proof exploits the curvature of the logarithmic loss on the ratio-bounded convex class $\Qs$
via an offset-process argument (see \cite{liang2015learning,mehta2017expconcave_statistical,vijaykumar2021localization} for related derivations).
Recall that $\widehat p$ is the maximum-likelihood estimator over $\Qs$, namely
\[
\widehat{p} \in \arg\min_{p \in \Qs}\wh L(p),
\qquad
\wh L(p)\triangleq -\frac{1}{n}\sum_{i=1}^n \log p(X_i),
\qquad
L(p)\triangleq -\E\log p(X),
\]
where $X\sim p^\star$ and $\E$ denotes expectation under $p^\star$.
As in the proof of \Cref{prop:misspecmodelaggregation}, the smoothing in the definition of $\Qs$ implies that
\begin{equation}\label{eq:ratioofdens-again}
\sup_{p,q \in \Qs}\sup_{x\in\X}\Bigl|\log\Bigl(\frac{p(x)}{q(x)}\Bigr)\Bigr|
\le \log(2nM).
\end{equation}
We start with the following symmetrization lemma.

\begin{lemma} \label{lem:mgf-start-misspec}
Let $p^\star_{\Qs}\in\arg\min_{p\in\Qs} L(p)$.
For every $\lambda>0$ it holds that
\begin{equation}\label{eq:mgf-start}
\E\exp\Big(\lambda\big(L(\widehat p)-L(p^\star_{\Qs})\big)\Big)
\le \E\exp\Big(\lambda\sup_{p\in\Qs} Z(p)\Big),
\end{equation}
where
\[
Z(p)\triangleq \frac{1}{n}\sum_{i=1}^n
\left[
  4\varepsilon_i\log\left(\frac{p^\star_{\Qs}(X_i)}{p(X_i)}\right)
  -\frac{1}{2(\log(2nM)+1)}\log^2\left(\frac{p^\star_{\Qs}(X_i)}{p(X_i)}\right)
\right],
\]
and $(\varepsilon_i)_{i=1}^n$ are i.i.d.\ Rademacher signs independent of $(X_i)_{i=1}^n$.
\end{lemma}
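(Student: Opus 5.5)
The approach is the standard one for likelihood-based empirical risk minimization: start from the basic inequality for the empirical minimizer, add and subtract the population excess risk with a negative quadratic offset, and symmetrize.

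\emph{Step 1 (basic inequality).} Write $g_p(x)=\log(p^\star_{\Qs}(x)/p(x))$, so that $L(p)-L(p^\star_{\Qs})=\E g_p(X)$. Since $\widehat p$ minimizes $\wh L$ over $\Qs$, we have $\frac1n\sum_i g_{\widehat p}(X_i)\le 0$. Hence
\[
L(\widehat p)-L(p^\star_{\Qs})\le \sup_{p\in\Qs}\Bigl(\E g_p-\frac1n\sum_i g_p(X_i)\Bigr).
\]

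\emph{Step 2 (inserting the offset).} To gain the quadratic offset, I would use the mixture trick from Lemma~\ref{lem:logloss-Tstyle}. For $p\in\Qs$, the midpoint $\bar p=\frac12(p+p^\star_{\Qs})$ lies in $\Qs$ by convexity. Optimality of $p^\star_{\Qs}$ then gives $L(\bar p)\ge L(p^\star_{\Qs})$. Lemma~\ref{lem:logloss-Tstyle} with $m=\log(2nM)$ (valid by \eqref{eq:ratioofdens-again}) yields pointwise
\[
g_{\bar p}\le \tfrac12 g_p-\tfrac{1}{8(m+1)}g_p^2 .
\]
Taking expectations gives $\E g_p\ge \frac{1}{4(m+1)}\E g_p^2$, which is a Bernstein-type condition. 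Now replace $\widehat p$ by $\bar{\widehat p}$: we have $L(\widehat p)-L(p^\star_{\Qs})\le 2(L(\bar{\widehat p})-L(p^\star_{\Qs}))+$ (a negative curvature term). Conversely, empirically $\frac1n\sum g_{\bar{\widehat p}}(X_i)\le \frac12\cdot\frac1n\sum g_{\widehat p}(X_i)-\frac{1}{8(m+1)}\frac1n\sum g_{\widehat p}^2(X_i)\le -\frac{1}{8(m+1)}\frac1n\sum g_{\widehat p}^2(X_i)$. Combining these, I aim for a bound of the form
\[
L(\widehat p)-L(p^\star_{\Qs})\le \sup_{p\in\Qs}\Bigl[2\Bigl(\E g_p-\tfrac1n\sum_i g_p(X_i)\Bigr)-c\,\E g_p^2-c\,\tfrac1n\sum_i g_p^2(X_i)\Bigr],
\]
with the $\E g^2$ terms absorbed using the Bernstein relation. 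The exact constants (the $4$ and the $\frac{1}{2(m+1)}$) come from tracking the factors of $2$.

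\emph{Step 3 (symmetrization).} Exponentiate with $\lambda>0$ and introduce an independent ghost sample $X_i'$. By Jensen, $\E g_p-\E' g_p^2$-type terms become conditional expectations over the ghost sample. Pulling $\E'$ outside the $\exp\sup$ gives a supremum of $\frac1n\sum_i[2(g_p(X_i')-g_p(X_i))-c(g_p^2(X_i)+g_p^2(X_i'))]$. This quantity is invariant in law under swapping $X_i\leftrightarrow X_i'$, so Rademacher signs $\varepsilon_i$ can be inserted. Split the supremum into the two halves via Cauchy--Schwarz, or via $\exp(a+b)\le\frac12(e^{2a}+e^{2b})$, which doubles the $2$ into the $4$. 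The result is $\E\exp(\lambda\sup_p Z(p))$ with each half having the same law.

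\emph{Main obstacle.} I expect Step 2 to be the hardest part. The difficulty is arranging the offset so that it appears with the empirical $g_p^2$ (needed for symmetrization) rather than the population version. The mismatch between $\widehat p$ and its midpoint also has to be handled without losing the sign, and it is here that the specific constant $\frac{1}{2(\log(2nM)+1)}$ must be recovered.
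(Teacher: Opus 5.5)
\textbf{Gap in Step 2: the empirical offset is not obtained.} Your population Bernstein bound $\E g_p\ge \frac{1}{4(m+1)}\E g_p^2$ is right, and your target pre-symmetrization inequality is the right one, with $c=\frac{1}{4(m+1)}$. Step 3 (ghost sample, Jensen, sign insertion, Cauchy--Schwarz splitting that turns $2$ into $4$) is exactly what the paper does. The problem is how Step 2 gets the empirical offset.

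First, the inequality $L(\widehat p)-L(p^\star_{\Qs})\le 2\bigl(L(\bar{\widehat p})-L(p^\star_{\Qs})\bigr)+(\text{negative term})$ goes the wrong way. Multiplying your pointwise bound by $2$ gives $g_{\widehat p}\ge 2g_{\bar{\widehat p}}+\frac{1}{4(m+1)}g_{\widehat p}^2$. So the midpoint only gives a \emph{lower} bound on the excess risk of $\widehat p$, and your claimed inequality fails whenever $\E g_{\widehat p}^2>0$.

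Second, your empirical inequality comes from comparing $\widehat p$ with $p^\star_{\Qs}$, i.e.\ $P_n g_{\widehat p}\le 0$. This controls the empirical excess of the midpoint $\bar{\widehat p}$, not of $\widehat p$. With only $P_n g_{\widehat p}\le 0$, the expansion $\E g_{\widehat p}\le 2(\E-P_n)g_{\widehat p}+2P_ng_{\widehat p}-a\E g_{\widehat p}^2$ has no $-P_n g^2$ term on the original sample. After symmetrization, one of the two halves then carries no quadratic offset, so you cannot reach $Z(p)$.

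\textbf{The fix.} Use the ERM property against the midpoint, not against $p^\star_{\Qs}$. Since $\Qs$ is convex, $\bar{\widehat p}=\frac12(\widehat p+p^\star_{\Qs})\in\Qs$. Because $\widehat p$ minimizes $\wh L$ over $\Qs$, it beats its own midpoint: $P_n g_{\widehat p}\le P_n g_{\bar{\widehat p}}$. Chain this with your pointwise inequality:
\[
P_ng_{\widehat p}\le \tfrac12P_ng_{\widehat p}-\tfrac{1}{8(m+1)}P_ng_{\widehat p}^2,
\qquad\text{i.e.}\qquad
P_ng_{\widehat p}\le -\tfrac{1}{4(m+1)}P_n g_{\widehat p}^2 .
\]
Set $a=\frac{1}{4(m+1)}$. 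The Bernstein bound then gives
\[
\E g_{\widehat p}\le 2\E g_{\widehat p}-a\E g_{\widehat p}^2=2(\E-P_n)g_{\widehat p}+2P_ng_{\widehat p}-a\E g_{\widehat p}^2\le \sup_{p\in\Qs}\{2(\E-P_n)g_p-aP_ng_p^2-a\E g_p^2\}.
\]
Your Step 3 then yields exactly $Z(p)$, with offset coefficient $2a=\frac{1}{2(m+1)}$. This is the paper's argument.
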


\begin{proof}
Define the log-density ratio
$
r_p(x)\triangleq \log\left(\frac{p^\star_{\Qs}(x)}{p(x)}\right).
$
Let $(X_1',\dots,X_n')$ be an independent ghost sample (i.i.d. as $(X_i)$) and set
\[
P_n r_p = \frac1n\sum_{i=1}^n r_p(X_i),
\quad
P_n(r_p^2) = \frac1n\sum_{i=1}^n r_p(X_i)^2,
\quad
P_n' r_p = \frac1n\sum_{i=1}^n r_p(X_i'),
\quad
P_n'(r_p^2) = \frac1n\sum_{i=1}^n r_p(X_i')^2.
\]
By convexity of $\Qs$, the midpoint
$
\bar p = \frac{\widehat p+p^\star_{\Qs}}{2}
$
belongs to $\Qs$. By the definition of $\widehat p$, it holds that
\begin{equation}\label{eq:erm-midpoint}
\wh L(\widehat p)\le \wh L(\bar p).
\end{equation}
Using \eqref{eq:ratioofdens-again} we can apply \Cref{lem:logloss-Tstyle}, which gives for all $i \in [n]$,
\[
-\log(\bar{p}(X_i)) = -\log\Big(\tfrac{\widehat p(X_i)+p^\star_{\Qs}(X_i)}{2}\Big)
\le \frac{-\log \widehat p(X_i)-\log p^\star_{\Qs}(X_i)}{2}
-\frac{1}{8(\log(2nM)+1)}\,r_{\widehat p}(X_i)^2.
\]
Averaging over $i\in [n]$, combining with \eqref{eq:erm-midpoint} and rearranging we obtain
\begin{equation}\label{eq:self-bounding}
P_n r_{\widehat p}\le -\frac{1}{4(\log(2nM)+1)}P_n\big(r_{\widehat p}^2\big).
\end{equation}

We repeat the same lines at the population level.
Fix any $p\in\Qs$ and let $\tilde p = (p+p^\star_{\Qs})/2\in\Qs$.
Because $p^\star_{\Qs}$ is a population minimizer over $\Qs$, we have
\begin{equation}\label{eq:pop-min}
L(p^\star_{\Qs}) \le L(\tilde p).
\end{equation}
Applying Lemma~\ref{lem:logloss-Tstyle} as above, using \eqref{eq:pop-min}, taking the expectation and rearranging we obtain that for all $p \in \Qs$,
\begin{equation}\label{eq:bernstein-pop}
\E[r_p(X)] \ge \frac{1}{4(\log(2nM)+1)}\E[r_p(X)^2].
\end{equation}
We have $L(\widehat p)-L(p^\star_{\Qs})
= \E r_{\widehat p}(X)$. For brevity of notation, set
$
\alpha = \frac{1}{4(\log(2nM)+1)}.
$
The following lines of inequalities hold (conditioned on $\widehat{p}$):
\begin{align*}
\E\, r_{\widehat p}(X)
&\stackrel{\eqref{eq:bernstein-pop}}{\le}
2\Big(\E\, r_{\widehat p}(X)-\frac{\alpha}{2}\E\big[r_{\widehat p}(X)^2\big]\Big)\\
&=
2\Big(\big(\E-P_n\big) r_{\widehat p}+P_n r_{\widehat p}-\frac{\alpha}{2}\E\big[r_{\widehat p}(X)^2\big]\Big)\\
&\stackrel{\eqref{eq:self-bounding}}{\le}
2\Big(\big(\E-P_n\big) r_{\widehat p}-\alpha\,P_n\big(r_{\widehat p}^2\big)-\frac{\alpha}{2}\E\big[r_{\widehat p}(X)^2\big]\Big)\\
&\le
\sup_{p\in\Qs}\Big\{2\big(\E-P_n\big) r_{p}-2\alpha\,P_n\big(r_{p}^2\big)-\alpha\,\E\big[r_{p}(X)^2\big]\Big\}\\
&\le
\sup_{p\in\Qs}\Big\{2\big(\E-P_n\big) r_{p}-\alpha P_n\big(r_{p}^2\big)-\alpha\E\big[r_{p}(X)^2\big]\Big\}.
\end{align*}
Finally, combining the above inequality with the standard symmetrization argument (that is, when appropriate $\E$ corresponds to taking the expectation with respect to the joint distribution of $(X_i)$, $(X_i^\prime)$ and $(\varepsilon_i)$) we have for every $\lambda>0$,
\begin{align*}
\E\exp\Big(\lambda\big(L(\widehat p)-L(p^\star_{\Qs})\big)\Big)
&\le \E\exp\big(\lambda\sup_{p\in\Qs}\Big\{2\big(\E-P_n\big) r_{p}-\alpha\,P_n\big(r_{p}^2\big)-\alpha\,\E\big[r_{p}(X)^2\big]\Big\}\big)
\\&\le \E\exp\big(\lambda\sup_{p\in\Qs}\Big\{2\big(P_n'-P_n\big) r_{p}-{\alpha}P_n\big(r_{p}^2\big)-{\alpha}P_n'\big(r_{p}^2\big)\Big\}\big),
\\
&=\E\exp\Big(
\lambda\sup_{p\in\Qs}
\frac1n\sum_{i=1}^n
\Big[
 2\varepsilon_i\big(r_p(X_i')-r_p(X_i)\big)
 -\alpha\big(r_p(X_i)^2+r_p(X_i')^2\big)
\Big]
\Big)
\\
&\le\E\exp\Big(
\lambda\sup_{p\in\Qs}
\frac1n\sum_{i=1}^n
\Big[
 4\varepsilon_i r_p(X_i)
 -2\alpha r_p(X_i)^2
\Big]
\Big),
\end{align*}
where in the last line we split the terms involving $P_n$ and $P_n\prime$ and use that for a pair of random variables $Y, Y'$ having the same distribution, it holds that $\E\exp(Y + Y^\prime) \le \sqrt{\E\exp(2Y)}\sqrt{\E\exp(2Y^\prime)} = \E\exp(2Y)$. The claim follows.
\end{proof}

The next lemma is about the Lipschitzness of the loss class indexed by $\Qs$. For $\alpha \in \Delta_{\M-1}$, we denote
\[
w_\alpha(x)\triangleq \Bigl(1-\frac1n\Bigr)\sum_{j=1}^M \alpha_j p_j(x)+\frac{1}{nM}\sum_{j=1}^M p_j(x),
\]
so that $\Qs=\{w_\alpha:\alpha\in\Delta_{\M-1}\}$, where $\Delta_{\M-1}$ is the simplex in $\R^M$.

\begin{lemma}
    \label{lem:lipschitz}
    For any $\alpha, \beta \in \Delta_{\M-1}$ it holds that
    \[
    \sup\limits_{x \in \X}|\log(w_\alpha(x)) - \log(w_\beta(x))| \le nM\|\alpha - \beta\|_1.
    \]
\end{lemma}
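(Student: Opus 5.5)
The plan is a pointwise comparison, fixing $x\in\X$. Write $w_\alpha(x)=w_\beta(x)+(1-\frac1n)\sum_{j}(\alpha_j-\beta_j)p_j(x)$ and bound the relative perturbation using the smoothing floor. The key observation is that every $w_\gamma$ with $\gamma\in\Delta_{\M-1}$ satisfies
\[
w_\gamma(x)\ \ge\ \frac{1}{nM}\sum_{k=1}^M p_k(x)\ \ge\ \frac{1}{nM}\,p_j(x)\qquad\text{for each } j\in[M].
\]
Hence $p_j(x)\le nM\,w_\gamma(x)$ for every $j$ and every $\gamma$.

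Rather than bounding $\log(1+u)$ directly, which is delicate when $u$ is close to $-1$, I would use the mean value theorem along the segment $\gamma_t=(1-t)\beta+t\alpha\in\Delta_{\M-1}$ for $t\in[0,1]$. The map $t\mapsto\log w_{\gamma_t}(x)$ is differentiable, since $w_{\gamma_t}(x)$ is affine in $t$ and strictly positive wherever $\sum_k p_k(x)>0$. Its derivative is
\[
\frac{(1-\frac1n)\sum_j(\alpha_j-\beta_j)p_j(x)}{w_{\gamma_t}(x)}.
\]
In absolute value this is at most $\sum_j|\alpha_j-\beta_j|\,p_j(x)/w_{\gamma_t}(x)\le nM\|\alpha-\beta\|_1$ by the floor above, using $1-\frac1n\le 1$. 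Integrating over $t\in[0,1]$ gives the claimed bound.

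One edge case needs handling. At points where all $p_k(x)=0$, both densities vanish, so the difference of logs is $0-0$ under the convention that such points are excluded (or that the difference is taken to be $0$). These points are $\mu$-null for every density in play, so I would restrict the supremum to $\{\sum_k p_k>0\}$.

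I expect no real obstacle. The only point needing care is that the convex combination $\gamma_t$ stays in the simplex, so that the floor applies uniformly along the whole path.
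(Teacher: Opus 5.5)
Your proposal is correct and takes essentially the paper's route: a mean-value bound on $\log$ combined with the smoothing floor $w_\gamma(x)\ge \frac{1}{nM}\sum_k p_k(x)$, which yields $nM\|\alpha-\beta\|_1$. The only difference is that the paper applies the mean value theorem to $\log$ directly between $w_\alpha(x)$ and $w_\beta(x)$, dividing by $\min\{w_\alpha(x),w_\beta(x)\}$, instead of differentiating along the segment $\gamma_t$; the two are equivalent because $t\mapsto w_{\gamma_t}(x)$ is affine.
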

\begin{proof}
Fix $\alpha,\beta\in\Delta_{\M-1}$ and $x\in\X$. It holds that
\[
|w_\alpha(x)-w_\beta(x)| = \left(1 - \frac{1}{n}\right)\left|\sum\limits_{j = 1}^M(\alpha_j - \beta_j)p_j(x)\right|
\le \sum_{j=1}^M |\alpha_j-\beta_j|\,p_j(x)
\le \|\alpha-\beta\|_1 \sum_{j=1}^M p_j(x).
\]
By the mean value theorem we have
\[
|\log(w_\alpha(x))-\log(w_\beta(x))|
\le \frac{|w_\alpha(x)-w_\beta(x)|}{\min\{w_\alpha(x), w_\beta(x)\}}.
\]
Observe that
$
w_\alpha(x) \ge \frac{1}{nM}\sum_{j=1}^M p_j(x)$ and 
$w_\beta(x) \ge \frac{1}{nM}\sum_{j=1}^M p_j(x)$.
Combining the above inequalities yields
\[
|\log(w_\alpha(x))-\log(w_\beta(x))|
\le
\frac{\|\alpha-\beta\|_1 \sum_{j=1}^M p_j(x)}{\frac{1}{nM}\sum_{j=1}^M p_j(x)}
= nM\|\alpha-\beta\|_1.
\]
The claim follows.
\end{proof}

We fix $\rho\in(0,1]$ and let $\mathcal A_\rho$ be the minimal $\rho$-net of $\Delta_{\M-1}$ with respect to the $\|\cdot\|_1$ distance.
Define the finite subclass $\mathcal Q_\rho= \{w_\alpha:\alpha\in\mathcal A_\rho\}\subseteq \mathcal Q$. Let $p$ be any element in $\Qs$ and $p_{\rho}$ be the $\rho$-close element to it in $\mathcal{Q}_{\rho}$ with respect to the net $\mathcal{A}_\rho$.
By \Cref{lem:lipschitz} and \eqref{eq:ratioofdens-again} we have
\[
Z(p) - Z(p_\rho) \le 4nM\rho + \frac{\log(2nM)}{\log(2nM)+1}nM\rho \le 5nM\rho,
\]
which implies for any $\lambda > 0$,
\begin{align*}
\E_{\varepsilon}\exp\Big(\lambda\sup_{p\in\Qs} Z(p)\Big)
&\le
\exp\Big(5\lambda nM\rho\Big)\,
\E_{\varepsilon}\exp\Big(\lambda\max_{q\in\Qs_\rho} Z(q)\Big)
\\
&\le
|\Qs_\rho|\exp\Big(5\lambda nM\rho\Big)\max\limits_{p \in \mathcal{Q}_{\rho}}\E_{\varepsilon}\exp\Big(\lambda Z(p)\Big).
\end{align*}
Using $\E\exp(\lambda \varepsilon_i) \le \exp(\lambda^2/2)$ we have for any $\lambda \in \left[0, \frac{n}{16(\log(2nM)+1)}\right]$ and $p \in \mathcal{Q}$,
\begin{align*}
\E_{\varepsilon}\exp\Big(\lambda Z(p)\Big)&=\E_{\varepsilon}\exp\left(\frac{\lambda}{n}\sum\limits_{i =1}^n\left(4\varepsilon_i\log\left(\frac{p^\star_{\Qs}(X_i)}{p(X_i)}\right)
-\frac{1}{2(\log(2nM)+1)}\log^2\left(\frac{p^\star_{\Qs}(X_i)}{p(X_i)}\right)\right)\right) 
\\
&\le \exp\left(\sum\limits_{i =1}^n\left(\frac{8\lambda^2}{n^2}\log^2\left(\frac{p^\star_{\Qs}(X_i)}{p(X_i)}\right)
-\frac{\lambda}{2n(\log(2nM)+1)}\log^2\left(\frac{p^\star_{\Qs}(X_i)}{p(X_i)}\right)\right)\right) 
\\
&\le 1.
\end{align*}
Taking expectation over $(X_i)$ and using \eqref{eq:mgf-start}, we obtain that for any $\lambda \in \left[0, \frac{n}{16(\log(2nM)+1)}\right]$,
\begin{equation}\label{eq:mgf-excess-final}
\E\exp\Big(\lambda\big(L(\widehat p)-L(p^\star_{\Qs})\big)\Big)
\le
|\Qs_\rho|\exp\Big(5\lambda nM\rho\Big) \le \exp\left(M\log(3/\rho) + 5\lambda nM\rho\right),
\end{equation}
where we used the standard volumetric argument (see \cite[Lemma 5.7]{wainwright2019high}) to bound $|\mathcal{Q}_{\rho}|$ for $\rho \le 1$. This immediately implies that, with probability at least $1-\delta$,
\begin{equation}\label{eq:hp-excess-risk-W}
L(\widehat p)-L(p^\star_{\Qs})
\le
5nM\rho + \frac{1}{\lambda}\Big(M\log(3/\rho)+\log(1/\delta)\Big).
\end{equation}
Taking $\lambda=\frac{n}{16(\log(2nM)+1)}$ and $\rho = \frac{1}{3n^2M} \le 1$, we obtain, using $L(\widehat p)-L(p^\star_{\Qs})
=\kl(p^\star,\widehat p)-\kl(p^\star,p^\star_{\Qs})$,
\begin{equation}\label{eq:hp-excess-risk-W-simplified}
\kl(p^\star,\widehat p)-\kl(p^\star,p^\star_{\Qs})
\le
\frac{5}{3n} + \frac{16(\log(2nM)+1)(M\log(9n^2M)+\log(1/\delta))}{n}.
\end{equation}
Finally, repeating the argument in \eqref{eq:logratioofdens} we have
\begin{equation}\label{eq:infW-vs-infconv}
\kl(p^\star,p^\star_{\Qs})
\le
\inf_{p\in\conv(\mathcal P)}\kl(p^\star,p)+\frac{2}{n}.
\end{equation}
The claim follows by combining \eqref{eq:hp-excess-risk-W-simplified} and \eqref{eq:infW-vs-infconv} and adjusting the constants. \qed
\end{document}